\title{Proof of modulational instability  of Stokes waves in deep water}
\author{Huy Q. Nguyen}
\address{Department of Mathematics, Brown University, Providence, RI 02912}
\email{hnguyen@math.brown.edu} 
\author{Walter A. Strauss}
\address{Department of Mathematics $\&$ Lefschetz Center for Dynamical Systems, Brown University, Providence, RI 02912}
\email{wstrauss@math.brown.edu}
\newcommand{\bq}{\begin{equation}}
\newcommand{\eq}{\end{equation}}
\newcommand{\bqa}{\begin{eqnarray*}}
\newcommand{\eqa}{\end{eqnarray*}}
\newcommand{\red}[1]{\textcolor{red}{#1}}
\newcommand{\blue}[1]{\textcolor{blue}{#1}}
\theoremstyle{plain}
\newtheorem{theo}{Theorem}[section]
\newtheorem{prop}[theo]{Proposition}
\newtheorem{lemm}[theo]{Lemma}
\newtheorem{coro}[theo]{Corollary}
\newtheorem{defi}[theo]{Definition}
\theoremstyle{definition}
\newtheorem{rema}[theo]{Remark} 
\newtheorem{nota}[theo]{Notation}
\DeclareMathOperator{\cnx}{div}
\DeclareMathOperator{\RE}{Re}
\DeclareMathOperator{\dist}{dist}
\DeclareMathOperator{\IM}{Im}
\DeclareMathOperator{\Op}{Op}
\DeclareMathOperator{\curl}{curl}
\DeclareMathOperator{\supp}{supp}
\DeclareMathOperator{\di}{d}
\DeclareSymbolFont{pletters}{OT1}{cmr}{m}{sl}
\DeclareMathSymbol{s}{\mathalpha}{pletters}{`s}
\def\sign{\mathrm{sign}}
\def\mb{\mathfrak{b}}\def\tt{\theta}
\def\eps{\varepsilon}
\def\na{\nabla}
\def\la{\left\lvert}
\def\lA{\left\lVert}
\def\ra{\right\vert}
\def\rA{\right\Vert} 
\def\les{\lesssim}
\def\lg{\langle}
\def\rg{\rangle}
\def\wh{\widehat}
\def\mez{\frac{1}{2}}
\def\tdm{\frac{3}{2}}
\def\tq{\frac{3}{4}}
\def\uq{\frac{1}{4}}
\def\Rr{\mathbb{R}}
\def\Nn{\mathbb{N}}
\def\Zz{\mathbb{Z}}
\def\Cc{\mathbb{C}}
\def\Tt{\mathbb{T}}
\def\P{\mathbb{P}}
\def\RHS{\text{RHS}}
\def\cE{\mathcal{E}}
\def\cF{\mathcal{F}}
\def\cK{\mathcal{K}}
\def\cN{\mathcal{N}}
\def\cP{\mathcal{P}}
\def\cT{\mathcal{T}}
\def\cL{\mathcal{L}}
\def\cU{\mathcal{U}}
\def\cO{\mathcal{O}}
\def\ld{\lambda}
\def\Ld{\lambda}
\def\p{\partial}
\def\na{\nabla}
\def\wc{\rightharpoonup}
\def\ka{\kappa}
\def\ma{\mathfrak{a}}
\def\mc{\mathfrak{c}}
\def\ol{\overline}
\def\T{\mathbb{T}}
\def\wt{\widetilde}
\def\cM{\mathcal{M}}
\def\ka{\kappa}
\newcommand{\hk}{\hspace*{.15in}}
\numberwithin{equation}{section}
\def\om{\omega}
\def\ul{\underline}
\def\red {\color{red} } 
\def\blue {\color{blue} } 
\date{\today}
\begin{document}
\begin{abstract}
It is proven that small-amplitude steady periodic water waves with infinite depth are
unstable with respect to long-wave perturbations.  This modulational instability
was first observed  more than half a century ago by Benjamin and Feir. It has  been proven rigorously only in the case of finite depth. We provide a completely different and self-contained approach to prove the spectral modulational instability for water waves in both the finite and infinite depth cases. 
\end{abstract}



\maketitle

\section{Introduction}
We consider classical water waves in two dimensions that are irrotational,  inviscid and horizontally periodic.  
The water is below a free surface $S$ and has infinite depth.  
 Such waves have  been studied for over two centuries, notably by Stokes \cite{Stokes}.  
A Stokes wave is a steady wave traveling at a fixed speed $c$.  It has been known for a century that 
a curve of small-amplitude Stokes waves exists \cite{Nekrasov, Civita, Struik}.  
In 1967 Benjamin and Feir \cite{BenjFeir} discovered that a small  long-wave  perturbation 
of a small Stokes wave will lead to exponential instability.  
This is called the {\it modulational} (or Benjamin-Feir or sideband) {\it instability}, a phenomenon 
 whereby deviations from a periodic wave are reinforced by the nonlinearity, leading to the eventual breakup of the wave into a train of pulses.  Here we provide a complete proof of this instability for deep water waves.  

To be a bit more specific, let $x$ be the horizontal variable and $y$ the vertical one.  
Consider the curve of steady waves  of a given period,  say $2\pi$ without loss of generality, to be parametrized by a small parameter $\eps$ which represents the wave  amplitude.  
Such a steady wave can be described in the moving plane (where $x-ct$ is replaced by $x$) 
by its free surface $S=\{y=\eta^*(x;\eps)\}$ and its velocity potential $\psi^*(x;\eps)$ restricted to $S$.  
We use a conformal mapping of the fluid domain to the lower half-plane, thereby converting the whole 
problem to a problem with a fixed flat surface.  Let the perturbation have  a small wavenumber $\mu$;  that is, we have introduced a long wave.  Linearization around the steady wave leads to a linear operator $\cL_{\mu,\eps}$.  
What we prove is the spectral instability, which means that the perturbed water wave grows in time 
like $e^{\ld t}$ for some complex number $\ld$ with positive real part.   
A way to state this formally is as follows.   
\begin{theo}\label{maintheorem}
There exists $\eps_0>0$ such that for all $0<|\eps|<\eps_0$, there exists $\mu_0=\mu_0(\eps)>0$ such that for all $0<|\mu|<\mu_0$, the 
 operator $\cL_{\mu,\eps}$ has an eigenvalue $\ld$ with positive real part.  Moreover, $\ld$ has the asymptotic expansion 
\bq \label{eigenvalue} 
\ld=\frac{\sqrt{g}}{2} i\mu +  \frac{\sqrt{g}}{2\sqrt2}|\mu\eps| + O(\mu^2) + O(\mu\eps^2),
\eq
where $g>0$ is the  acceleration due to gravity.
\end{theo}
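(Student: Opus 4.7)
My plan is to reduce the spectral problem for $\cL_{\mu,\eps}$ near $\ld=0$ to the study of a $4\times 4$ analytic matrix $M(\mu,\eps)$, and to read off \eqref{eigenvalue} from an explicit expansion of its eigenvalues. The structure is classical (Bloch--Floquet plus Kato perturbation theory plus an explicit Stokes expansion), but every step becomes more delicate in infinite depth.

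\textbf{Setup and unperturbed spectrum.} Working in the holomorphic (Dyachenko--Zakharov) coordinates afforded by the conformal map to the lower half-plane built into the hypotheses, the linearized operator $\cL_{\mu,\eps}$ acts on pairs of functions on $\T$ via the multiplier $|D|$, the derivative $\p_x$, and multiplication operators involving $(\eta^*,\psi^*)$. The Bloch--Floquet ansatz $u(x)=e^{i\mu x}v(x)$ with $v\in L^2(\T)$ replaces $|D|$ by $|D+\mu|$ and $\p_x$ by $\p_x+i\mu$, so $\cL_{\mu,\eps}$ becomes a family on a fixed periodic space, analytic in $\eps$ and analytic in $\mu$ on each half-interval $\pm\mu>0$. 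On the flat state ($\eps=0$) at Floquet exponent zero, $\cL_{0,0}$ is diagonal in the Fourier basis with eigenvalues $-ic_0 k\pm i\sqrt{g|k|}$, $k\in\Zz$, where $c_0=\sqrt g$ is the critical linear wave speed on $\T$. Precisely four of these branches pass through $\ld=0$: a double zero at $k=0$ carrying a Jordan block of size two, and simple zeros at $(k,\pm)=(1,+)$ and $(k,\pm)=(-1,-)$. The resulting four-dimensional generalized kernel of $\cL_{0,0}$ is spanned by $1$, $e^{\pm ix}$ and a companion mode attached to the Jordan block at $k=0$.

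\textbf{Reduction and asymptotics.} I would then invoke Kato's analytic perturbation theory to produce a four-dimensional total eigenspace $\cU_{\mu,\eps}$ depending analytically on $(\mu,\eps)$ in each half-neighborhood $\pm\mu>0$, and represent $\cL_{\mu,\eps}\big|_{\cU_{\mu,\eps}}$ as a $4\times 4$ matrix $M(\mu,\eps)$. Translation invariance, the reversibility $x\mapsto-x$, and the Hamiltonian structure inherited from Zakharov's symplectic form force $M$, in a suitable basis, to split at leading order into two $2\times 2$ Hamiltonian--Krein blocks corresponding to the pairs $(0,+)\leftrightarrow(1,+)$ and $(0,-)\leftrightarrow(-1,-)$. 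Expanding using the Stokes series $\eta^*=\eps\cos x+O(\eps^2)$, $\psi^*=O(\eps)$, a direct computation puts one block in the form
\bq
M_{\rm block}(\mu,\eps)=\begin{pmatrix} \frac{\sqrt g}{2}i\mu & \beta\mu\eps \\ \gamma\mu\eps & \frac{\sqrt g}{2}i\mu \end{pmatrix}+O(\mu^2)+O(\mu\eps^2),
\eq
with explicit real constants $\beta,\gamma$ satisfying $\beta\gamma=g/8$ in the infinite-depth case. Its eigenvalues are $\frac{\sqrt g}{2}i\mu\pm\sqrt{\beta\gamma}\,|\mu\eps|+O(\mu^2)+O(\mu\eps^2)$, and since $\sqrt{\beta\gamma}=\sqrt g/(2\sqrt2)$ this is exactly \eqref{eigenvalue}.

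\textbf{Main obstacle.} The hard part is the computation and sign analysis of $\beta\gamma$ in infinite depth. The classical Benjamin--Feir argument in finite depth $h$ rests on subtle cancellations in expansions of $\tanh(kh)$, and these degenerate as $h\to\infty$; a new mechanism based directly on the half-plane Hilbert transform must be identified. Concretely, this requires pushing the Stokes expansion of $(\eta^*,\psi^*)$ to second order, computing the matrix elements of the first and second variations of $\cL_{\mu,\eps}$ between the four modes spanning $\cU_{0,0}$, and verifying the resonant cancellations that leave $\beta\gamma>0$. Once this positivity is established the existence of $\ld$ with positive real part is automatic from the analytic square-root formula above; without it, the block eigenvalues remain on the imaginary axis and the Stokes wave is neutrally stable.
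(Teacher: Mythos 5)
Your high-level strategy (Bloch--Floquet reduction, $4\times 4$ spectral matrix near the collided eigenvalue, Stokes expansion, eigenvalue asymptotics) is the same as the paper's, but there is a genuine and serious gap in how you propose to compute the instability coefficient.

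First, the claimed $2\times 2$ Hamiltonian--Krein block splitting of $M(\mu,\eps)$ does not hold in the natural basis. In the paper's basis $\{U_1,\dots,U_4\}$ of the generalized kernel of $\cL_{0,\eps}$ (given by Theorem~\ref{kernel:L}), the Jordan chain at $k=0$ produces an $O(1)$ entry ${\bf A}_{41}=-1$ coupling the $k=0$ modes, while the $k=\pm 1$ modes couple via ${\bf A}_{32}=-\eps^2$. The matrix ${\bf A}_{\mu,\eps}$ in \eqref{matrixA}/\eqref{final:A-I} has no block structure; the desired quadratic in $\ld-\frac12 i\mu$ only emerges from the full quartic determinant after the rescaling $\ld=\gamma\mu$ and an expansion in which several entries multiply across ``blocks.'' Symmetry considerations constrain which entries vanish (many parity relations are exploited in Section 5), but they do not give a literal block decomposition.

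Second and more critically: the coefficient $\beta\gamma=g/8$ you need, which after scaling gives the crucial term $-\frac18\mu^3\eps^2$ in the characteristic polynomial, does \emph{not} come from the first-order matrix elements $(\cL_{\mu,\eps}U_j,U_k)$ between the unperturbed basis vectors. If you compute only ${\bf A}_{\mu,\eps}$ as in \eqref{def:AI}, the coefficient of $\mu^3\eps^2$ at $\ld=\frac12 i\mu$ vanishes, and no bifurcation off the imaginary axis occurs at this order. The dominant term is produced by the \emph{sideband corrections}: in the paper's Lyapunov--Schmidt reduction, the matrix entry ${\bf B}_{23}$ from the sideband matrix \eqref{def:B} carries the leading contribution $-\tfrac18\mu^2$ (Lemma~\ref{lemm:B:1}, equation \eqref{estB:23}), which feeds into the determinant through the term $II_{10}$ in \eqref{expand:II}. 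Computing ${\bf B}_{23}$ requires solving a reduced equation through the inverse operator $\Xi_\eps=(\Pi\cL_{0,\eps})^{-1}$ built in Theorem~\ref{theo:rangeL}, i.e., a second-order perturbation of the \emph{eigenvectors}, not just of the eigenvalues. Your proposal says ``a direct computation puts one block in the form\ldots'' but never acknowledges this necessity; your closing paragraph refers only to ``the first and second variations of $\cL_{\mu,\eps}$ between the four modes spanning $\cU_{0,0}$,'' which would compute only ${\bf A}$ and miss ${\bf B}$ entirely. The paper stresses that in the model problems (KdV, Whitham) the sideband terms are always negligible remainders, and that the full water wave problem is different precisely on this point. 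Without the sideband contribution your block has $\beta\gamma=0$ and your argument collapses to the conclusion that the Stokes wave is neutrally stable at this order, which is the opposite of what you want to prove.
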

%
The concept of modulational instability arose in multiple contexts in the 1960's, both in the 
theory of fluids including water waves and in electromagnetic theory including laser beams and 
plasma waves.  MathSciNet lists more than 500 papers mentioning  ``modulational instability" or ``Benjamin-Feir instability".  
Major players in its early history included Lighthill 1965, Whitham 1967, Benjamin 1967 and Zakharov 1968,   
as described historically in \cite{ZakOst}. 
It was a surprising development when Benjamin and Feir \cite{BenjFeir, Benj} discovered the phenomenon 
in the context of the full theory of water waves, as they did 
both theoretically and experimentally (see also \cite{Whitham:book, Whitham}).  They identified the  
most dominant plane waves that can arise from small disturbances of the steady wave. 
However, to make a completely rigorous proof of the instability is another matter.   This is our focus.
It took about three decades for such a proof to be found for the case of finite depth.  
Bridges and Mielke \cite{BM} accomplished the feat by means of a spatial dynamical reduction to a four-dimensional center manifold.   Nevertheless, their proof {\it cannot} be generalized to the case of infinite depth 
due to the lack of compactness, which invalidates the hypotheses of the center manifold theory. The infinite depth case  has remained unsolved since then. After the completion \cite{NguyenStrauss} of the current paper, we learned of another proof \cite{HurYang} of the spectral instability which also does not generalize to infinite depth. 
In the current paper we provide a  {\it completely different approach}
to prove the modulational instability of small-amplitude Stokes waves.  Our  proof is 
self-contained, does not rely on any abstract Hamiltonian theory, and encompasses {\it both the finite and infinite depth cases}.  
In order to avoid tedious algebra, we focus on the unsolved case of infinite depth and shall merely point out 
the main modifications necessary for the finite depth case.  
As distinguished from \cite{BM}, throughout our proof the physical variables are retained.  
Our linearized system is obtained from the Zakharov-Craig-Sulem formulation  together with the 
use of Alinhac's ``good unknown" and with a Riemann mapping.  Thus it is compatible with the Sobolev energy 
estimates for the nonlinear system (see e.g \cite{Lannes, AlaMet, ABZ1, ABZ3, RouTzv}). After the completion \cite{NguyenStrauss} of the current paper, we learned of the paper \cite{ChenSu} by Chen and Su which uses an approximation to the focusing cubic nonlinear Schr\"odinger equation (NLS)  to indirectly deduce the nonlinear instability. On the other hand, we expect that the framework developed in our paper should be useful to directly prove the nonlinear instability without any reference to NLS.

There have been many studies of the modulational instability for a variety of approximate water wave models, 
 such as KdV, NLS and the Whitham equation
by, for instance, Whitham \cite{Whitham:book},  Segur,  Henderson,  Carter and  Hammack \cite{SegHenCarHam}, Gallay and Haragus \cite{GalHar},  Haragus and Kapitula \cite{HarKap}, 
Bronski and Johnson \cite{BronJohn}, Johnson \cite{Johnson}, Hur and Johnson \cite{HurJohn} and  Hur and Pandey \cite{HurPan}.  
These models are surveyed in \cite{BronJohnHur}.  Beyond the linear modulational theory, a proof of the nonlinear modulational instability 
for several of the models is given in \cite{JinLiaLin}. That is, an appropriate Sobolev norm of a long-wave perturbation to the nonlinear problem grows in time. There have also been many numerical studies on this phenomenon.  We mention the paper  by 
Deconinck and Oliveras \cite{DecOli}, which provides a detailed description of the 
unstable solutions including pictures of the unstable manifold of solutions far from the bifurcation, 
a rigorous proof of which remains largely open. On the other hand, the asymptotic expansion 
\eqref{eigenvalue} does show that the unstable eigenvalue, as a curve  with parameter $\mu$, 
has  slope  $\sim |\eps|^{-1}{\sign(\mu)\sqrt{2}}$ near the origin in the complex plane.  This agrees well with the numerical calculation shown in the following figure \cite{Creedon}.
 \begin{figure}[hpt!]
  \centering
  \includegraphics[width=.5\linewidth]{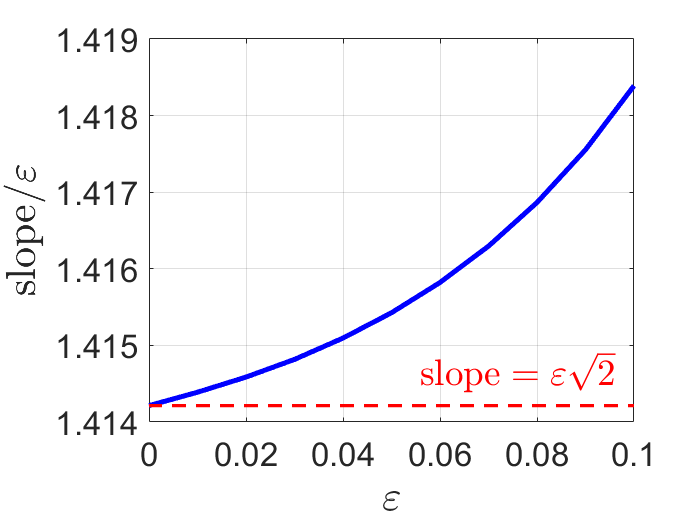}
\end{figure}


Now we outline the contents of this paper.  

In Section 2 we write the water wave equations in the Zakharov-Craig-Sulem formulation.  
Thus the system is written in terms of the pair of functions $\eta$, which describes the free surface $S$ and 
$\psi$, which is the velocity potential on $S$.  This formulation involves the Dirichlet-Neumann 
operator $G(\eta)$, which is non-local.  The advantages of this formulation are that  $\eta$ and $\psi$ 
depend on only the single variable $x$ and that the system has Hamiltonian form.  
Stokes' steady wave $\eta^*(x;\eps),\psi^*(x;\eps)$ is then expanded in powers of $\eps$ up to $\eps^3$.   
Such an expansion basically goes back to Stokes himself, although the literature can be confusing so 
we include a proof in Appendix A.  We note however that the proof of our main result only requires expansions up to $\eps^2$.

Section 3 is devoted to the linearization, using the shape-derivative formula of \cite{Lannes} and Alinhac's good unknown.  
Then we flatten the boundary by using the conformal mapping between the fluid domain and the lower 
half-plane.  This converts the implicit nonlocal operator $G(\eta)$ to the explicit Fourier multiplier $G(0)=|D|$. A direct proof is given in Appendix B.   We look for solutions of the form $e^{i\mu x}U(x;\eps)$, where  $U(\cdot,\eps)$ has period $2\pi$ and  a small $\mu$ represents a long-wave perturbation.  
The unknowns are $U=$ the pair $(\eta, \text{good unknown})$, appropriately modified by the conformal mapping.   This brings us to the linearized operator $\cL_{\mu,\eps}$, which acts 
from $(H^1(\T))^2$ to $(L^2(\T))^2$.  It is Hamiltonian.  
The instability problem is thereby reduced to finding an eigenvalue $\ld(\mu,\eps)$ of $\cL_{\mu,\eps}$ 
with positive real part.

We put $\mu=0$ in Section 4.  It is shown that $\cL_{0,\eps}$ has a two-dimensional nullspace 
and a four-dimensional generalized nullspace $\cU(\eps)$.  Then we construct an {\it explicit} basis of $\cU(\eps)$, denoted by 
$\{U_1(\eps),\dots U_4(\eps)\}$.  This construction works for both the finite and infinite depth cases and is the starting point of our proof. We expand each $U_j(\eps)$ in powers of $\eps$.  Then we compute the nullspace and range of the operator $\Pi \cL_{0,\eps}$ where $\Pi$ is the projection onto  the orthogonal complement of $\cU(\eps)$. This will be crucially used in searching for a bifurcation from $\cU(\eps)$ when $\mu$ is nonzero. 

Now with $\mu\ne0$ in Section 5 we expand the inner products $(\cL_{\mu,\eps} U_j,U_k)$ in powers of both 
parameters $\mu$ and $\eps$.  Our procedure of looking at the inner products roughly follows 
the procedure of Johnson \cite{Johnson} and Hur and Johnson \cite{HurJohn}, who carried it out in their stability analysis for KdV-type equations and the Whitham equation,  which followed several earlier works cited above. 

Of course, for fixed $\eps$ the perturbation due to $\mu\ne0$ will change the vanishing eigenvalue 
to $\ld(\mu,\eps)\ne0$.   The associated eigenfunction will have a small component outside of $\cU(\eps)$; 
that is, it will have the form $\sum_{j=1}^4 \alpha_j (U_j(\eps) + W_j(\mu,\eps))$.  
We call $W_j$ the {\it sideband functions}. Perturbation theory for linear operators merely asserts that 
each $W_j(\mu,\eps)$ is small if $\mu$ is small enough (see \cite{Kato}).  
In Subsection 6.1 we treat these sideband functions  by means of a rather subtle version of the Lyapunov-Schmidt method 
that uses the inverse of the operator $\Pi \cL_{0,\eps}$  obtained in Section 4. 
In Subsection 6.2 we expand $(\cL_{\mu,\eps} W_j,U_k)$ in powers of $(\mu,\eps)$ up to second order in $\eps$.  

In Section 7 we combine the asymptotic expansions of Sections 5 and 6.  The key task is to identify the leading terms 
and to handle the numerous remainder terms.  Surprisingly, it turns out that one of the key leading terms 
comes from $(\cL_{\mu,\eps} W_j,U_k)$, namely the one that we denote by $II_{10}$ in \eqref{expand:II}.  
 That is, it is the combination of the expansions of $(\cL_{\mu,\eps} U_j,U_k)$ and $(\cL_{\mu,\eps} W_j,U_k)$ 
that lead to the required result. We remark that in the works cited above, the sideband functions were always treated as negligible remainders; it is different for this full water wave problem.   Finally we use the expansions to deduce that there is an eigenvalue of the form \eqref{eigenvalue},  
which obviously has a positive real part.

 The explicit expansions require detailed calculations.  We have carried them out all the way to third order, 
which is more than necessary for our instability proof, but has potential utility in future  
theoretical and numerical research.  We have summarized these expansions in Appendix D.  


\section{The Zakharov-Craig-Sulem formulation and Stokes waves}
We consider the fluid domain 
\bq
\Omega(t)=\{(x, y): x\in \Rr,~y<\eta(x, t)\}.
\eq
 below the  free surface $S=\{(x , \eta(x, t)): x\in \Rr\}$ to have infinite depth.
Assuming that the fluid is incompressible,  inviscid  and irrotational, the velocity field admits a harmonic 
potential $\phi(x, y, t):\Omega\to \Rr$. Then $\phi$ and $\eta$ satisfy the water wave system
\bq\label{ww:0}
\begin{cases}
 \Delta_{x, y} \phi =0\quad \text{ in } \Omega, \\
\p_t\phi + \tfrac12 |\na_{x, y}\phi|^2 =- g\eta +P\quad \text{ on } \{y=\eta(x)\}, \\
 \p_t\eta+\p_x\phi\p_x\eta=\p_y\phi\quad  \text{ on } \{y=\eta(x)\}, \\
 \na_{x, y}\phi \to 0 \text{ as } y\to -\infty,
\end{cases}
\eq
where $P\in \Rr$ denotes the Bernoulli constant and $g>0$ is the constant acceleration due to gravity. The second equation is Bernoulli's,  which follows from the 
pressure being constant along the free surface; the third equation expresses the kinematic boundary condition that particles on the surface remain there; 
the last condition asserts that the water is quiescent at great depths.

 In order to reduce the system to the free surface $S$, we introduce the Dirichlet-Neumann operator $G(\eta)$ 
 associated to $\Omega$, namely, 
\bq\label{def:Gh}
G(\eta)f=\p_y\tt(x, \eta(x))-\p_x\tt(x, \eta(x))\p_x\eta(x),
\eq
where $\tt(x, y)$ solves the elliptic problem 
\bq\label{elliptic:G}
\begin{cases}
\Delta_{x, y}\tt=0\quad\text{in}~\Omega,\\
\tt\vert_{y=\eta(x)}=f(x),\quad \na_{x, y} \tt\in L^2(\Omega).
\end{cases}
\eq
Let $\psi$ denote the trace of the velocity potential on the free surface, $\psi(t, x)=\phi(t, x, \eta(t, x))$.   
In the moving frame with speed $c$, the gravity water wave system written in the Zakharov-Craig-Sulem 
formulation \cite{Zak, CraSul} is 
 \bq\label{ww:c}
 \begin{cases}
 \p_t \eta=c\p_x\eta+ G(\eta)\psi,\\
 \p_t\psi=c\p_x\psi-\mez|\p_x\psi|^2+\mez\frac{\big( G(\eta)\psi+\p_x\psi\p_x\eta\big)^2}{1+|\p_x\eta|^2}-g\eta+P.
 \end{cases}
 \eq
 By a steady wave we mean that $\eta$ is a function of $x-ct$ and $\phi$ a function of $(x-ct,y)$. 
 By a {\it Stokes wave} we mean a periodic steady solution of \eqref{ww:c}; that is, 
  \bq\label{sys:Stokes}
 \begin{cases}
F_1(\eta, \psi, c):=c\p_x\eta+ G(\eta)\psi=0,\\
 F_2(\eta, \psi, c, P):=c\p_x\psi-\mez|\p_x\psi|^2+\mez\frac{\big( G(\eta)\psi+\p_x\psi\p_x\eta\big)^2}{1+|\p_x\eta|^2}-g\eta+P=0.
 \end{cases}
 \eq
   \begin{figure}[hpt!]
  \centering
  \includegraphics[width=.7\linewidth]{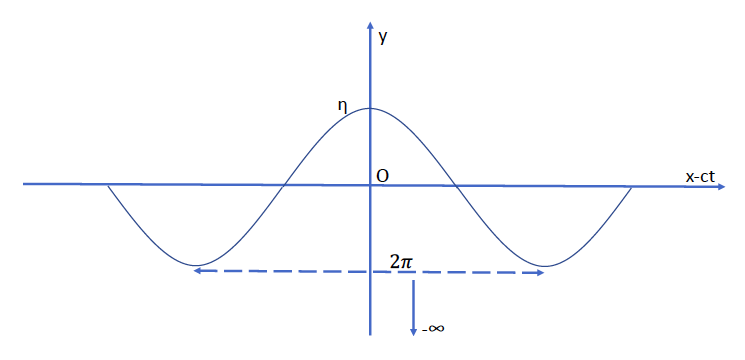}
  \caption{Stokes wave}
\end{figure}

 The existence of a smooth local curve of smooth steady solutions satisfying ($i$) and ($ii$) below has been known for 
 a century, going back to Nekrasov \cite{Nekrasov} and Levi-Civita \cite{Civita}. 
 \begin{theo}\label{theo:Stokes}
 For all  $P\in \Rr$, there exists a curve of smooth steady solutions $(\eta, \psi, c, P)$  to  \eqref{sys:Stokes} 
 parametrized by the amplitude $|a|\ll 1$ and the Bernoulli constant $P\in \Rr$ such that 
 
 (i)  $\eta$ and $\psi$ are $2\pi$-periodic,
 
 (ii) $\eta$ is even and $\psi$ is odd. 
 
 Other than the trivial solutions (with $\eta\equiv0$), the curve is unique. These solutions are called Stokes waves.
  \end{theo}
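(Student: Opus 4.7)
The plan is to obtain the Stokes branch as a local bifurcation from the trivial solution using the Crandall--Rabinowitz theorem on bifurcation from a simple eigenvalue, with the Bernoulli constant $P$ carried along as a free parameter. First I would impose the symmetry constraints by working in $X=H^s_{\mathrm{even}}(\T)\times H^s_{\mathrm{odd}}(\T)$ for some large $s$, consistent with the invariance of the system \eqref{sys:Stokes} under $(\eta(x),\psi(x))\mapsto(\eta(-x),-\psi(-x))$. For a given $P$, the trivial steady wave is the constant solution $\eta\equiv P/g$, $\psi\equiv 0$; after subtracting this constant (a symmetry of the formulation via the transformation $\eta\mapsto\eta+c_0$, $P\mapsto P+gc_0$) it suffices to construct the bifurcating branch at $P=0$, $\eta=0$, $\psi=0$.

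Next I would compute the linearized operator $L_c:=DF(0,0,c)$ from $X$ to $H^{s-1}_{\mathrm{even}}\times H^{s-1}_{\mathrm{odd}}$. Since $G(0)=|D|$ (see Appendix B), one reads off
\begin{equation*}
L_c\begin{pmatrix}\delta\eta\\\delta\psi\end{pmatrix}=\begin{pmatrix}c\p_x\delta\eta+|D|\delta\psi\\c\p_x\delta\psi-g\delta\eta\end{pmatrix},
\end{equation*}
and expanding in the Fourier basis shows that, within the even/odd subspaces, $\ker L_c$ is trivial unless $c^2=g/|k|$ for some $k\in\Nn$. Choosing the fundamental mode $k=1$ and the positive branch $c_0=\sqrt{g}$, the kernel is one-dimensional and spanned by $U_\star=(\cos x,\sqrt{g}\sin x)$. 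The range of $L_{c_0}$ is the $L^2$-orthogonal complement of $U_\star^{\perp}:=(-\sqrt{g}\cos x,\sin x)$ in the target space (computed using the Hamiltonian symmetry of $L_{c_0}$), so the kernel is algebraically and geometrically simple.

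With this in hand, I would verify the Crandall--Rabinowitz transversality condition $\p_c L_{c_0}U_\star\notin\mathrm{Range}(L_{c_0})$, which reduces to checking that $(\p_x\cos x,\sqrt{g}\p_x\sin x)=(-\sin x,\sqrt{g}\cos x)$ has a non-zero pairing with $U_\star^{\perp}$; a direct computation yields a non-zero constant. The Dirichlet--Neumann operator $\eta\mapsto G(\eta)\psi$ is real-analytic on a neighborhood of $0$ in $H^s$ (for the infinite-depth case the standard harmonic extension and conformal description in Sections 2--3 give this), so $F=(F_1,F_2)$ is analytic on $X\times\Rr\times\Rr$. The Crandall--Rabinowitz theorem then delivers an analytic curve
\begin{equation*}
a\mapsto\bigl(\eta(\cdot;a,P),\,\psi(\cdot;a,P),\,c(a,P)\bigr),\quad|a|\ll 1,
\end{equation*}
of nontrivial solutions to \eqref{sys:Stokes} with $P$ as a second free parameter (entering through the aforementioned $\eta$-shift symmetry), with $\eta=a\cos x+O(a^2)$, $\psi=a\sqrt{g}\sin x+O(a^2)$, and $c=\sqrt{g}+O(a^2)$. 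Smoothness in $x$ of each solution follows from an elliptic bootstrap applied to \eqref{elliptic:G} combined with the regularity of $\p_x\psi$ deduced iteratively from \eqref{sys:Stokes}; uniqueness up to the trivial branch is built into the simple-eigenvalue version of the theorem.

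The main obstacle I expect is not the abstract bifurcation step, which is classical, but rather justifying analyticity and the $H^s$-mapping properties of $G(\eta)$ in the infinite-depth setting with sufficient uniformity to invoke Crandall--Rabinowitz on Sobolev spaces. A convenient way to sidestep this is to pull the problem to the flat strip via the conformal map used in Section 3 of the paper: this replaces $G(\eta)$ by $|D|$ plus smooth corrections, and the linearized system acquires the same kernel and transversality structure, so the bifurcation argument runs in the flattened variables with only polynomial nonlinearities in the conformal description. Since the paper's main results only require the expansion of the curve up to $O(\eps^3)$, and the asymptotic expansion is derived separately in Appendix A by substituting a formal power series into \eqref{sys:Stokes} and solving order by order (which is consistent with the analytic curve produced above), this proof plan suffices to establish Theorem~\ref{theo:Stokes}.
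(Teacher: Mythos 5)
The paper does not prove Theorem~\ref{theo:Stokes}; it cites Nekrasov, Levi-Civita and Struik as the classical sources, and Appendix~A only derives the coefficient expansion of Proposition~\ref{steadyexpansion} by formal substitution, taking existence for granted. Your bifurcation proof via Crandall--Rabinowitz is a standard modern alternative to those classical arguments and is, in outline, correct; the reduction of arbitrary $P$ to $P=0$ via $\eta\mapsto\eta+P/g$ is exactly the observation the paper makes when proving Proposition~\ref{steadyexpansion}. Two computational slips need fixing before the sketch is airtight. First, since $F_1$ is odd and $F_2$ is even when $(\eta,\psi)$ is (even, odd), the map $F$ lands in $H^{s-1}_{\mathrm{odd}}\times H^{s-1}_{\mathrm{even}}$, not $H^{s-1}_{\mathrm{even}}\times H^{s-1}_{\mathrm{odd}}$ as you wrote. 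Second, the cokernel element you exhibit, $(-\sqrt g\cos x,\sin x)$, has the wrong parity (it lies in the domain parities, not the target parities) and in fact pairs to zero with $\p_c L_{c_0}U_\star=(-\sin x,\sqrt g\cos x)$, so as written the transversality check fails; the correct cokernel element in $(\mathrm{odd},\mathrm{even})$ is $(\sqrt g\sin x,-\cos x)$, which is $J^{-1}U_\star$ coming from the Hamiltonian structure $L_{c_0}=JK_{c_0}$, and against which the transversality pairing evaluates to $-2\sqrt g\,\pi\neq 0$. With those two corrections the argument closes, and you are right that the only genuinely nontrivial technical input is analyticity of $\eta\mapsto G(\eta)\psi$ on the Sobolev scale in infinite depth --- precisely the step the paper sidesteps by deferring to the classical literature rather than reproving existence.
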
 
  It is readily seen that system \eqref{sys:Stokes} respects the evenness of $\eta$ and the oddness of $\psi$.  
  Expansions of Stokes waves with respect to the amplitude $a$ are given in the next proposition.
  \begin{prop}\label{steadyexpansion}
 The following  expansions hold for the solutions in Theorem \ref{theo:Stokes}.   
\bq\label{expand:Stokes}
\begin{aligned}
&\eta=\frac{P}{g}+a \cos x+\mez a^2\cos(2x)+a^3\Big\{\frac{1}{8}\cos x+\frac{3}{8}\cos (3x)\Big\}+O(a^4),\\
& \psi= a \sqrt{g}\sin x+\frac{\sqrt{g}}{2}a^2\sin (2 x)+\frac{\sqrt{g}}{4} a^3\big\{3\sin x\cos(2x)+\sin x\big\}+O(a^4),\\
&c=\sqrt{g}+\frac{\sqrt{g}}{2}a^2+O(a^3).
\end{aligned}
\eq
 \end{prop}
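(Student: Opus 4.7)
The plan is to substitute formal power-series ans\"atze into \eqref{sys:Stokes}, match orders in $a$, and exploit the evenness--oddness parity to make the algebra tractable. Since Theorem \ref{theo:Stokes} already provides a smooth local curve of Stokes waves, the Taylor coefficients of that curve are unique, so it suffices to verify the stated coefficients formally.

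\textbf{Ansatz and expansion of $G(\eta)$.} Writing
\[
\eta(x;a)=\tfrac{P}{g}+\sum_{n\ge1}a^n\eta_n(x),\qquad \psi(x;a)=\sum_{n\ge1}a^n\psi_n(x),\qquad c(a)=\sum_{n\ge0}c_na^n,
\]
with each $\eta_n$ a finite cosine polynomial and each $\psi_n$ a finite sine polynomial, we normalize by declaring $\eta_1(x)=\cos x$. In the infinite-depth case $G(0)=|D|$ (Appendix B), and the analytic expansion of $G(\eta)$ in powers of $\eta$ begins
\[
G(\eta)\psi=|D|\psi-|D|(\eta|D|\psi)-\p_x(\eta\p_x\psi)+O(\eta^2),
\]
with higher-order terms computable either from Lannes' shape-derivative formula \cite{Lannes} or by perturbatively solving the elliptic problem \eqref{elliptic:G}.

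\textbf{Order-by-order matching.} At $O(a)$, the linearization gives $c_0\p_x\eta_1+|D|\psi_1=0$ and $c_0\p_x\psi_1-g\eta_1=0$; with $\eta_1=\cos x$ these force $\psi_1=c_0\sin x$ together with the deep-water dispersion relation $c_0^2=g$, yielding $c_0=\sqrt{g}$ and $\psi_1=\sqrt{g}\sin x$. At $O(a^2)$, the mode-$k$ block of the linearization is invertible for every $k\ge 2$ but has a one-dimensional kernel at $k=1$ (spanned by $(\cos x,\sqrt{g}\sin x)$); the only mode-$1$ piece of the forcing at this order comes from $c_1\p_x(\eta_1,\psi_1)$, and matching $\sin x$ against $\cos x$ coefficients in the range condition forces $c_1=0$. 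The mode-$2$ block then yields $\eta_2=\mez\cos 2x$ and $\psi_2=\tfrac{\sqrt{g}}{2}\sin 2x$. At $O(a^3)$, cubic forcing produces nontrivial contributions in both modes $1$ and $3$: the mode-$1$ solvability condition pins down $c_2=\tfrac{\sqrt{g}}{2}$, the mode-$3$ block yields the coefficients of $\cos 3x$ and $\sin 3x$, and the residual mode-$1$ components of $(\eta_3,\psi_3)$ are determined by the chosen normalization, matching \eqref{expand:Stokes}.

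\textbf{Main obstacle.} The principal difficulty is the bookkeeping of the quadratic and cubic contributions generated by $G(\eta)\psi$, by $|\p_x\psi|^2$, and especially by the rational quantity $(G(\eta)\psi+\p_x\psi\p_x\eta)^2/(1+|\p_x\eta|^2)$ in $F_2$; the latter must be expanded via $(1+|\p_x\eta|^2)^{-1}=1-|\p_x\eta|^2+\ldots$ and the resulting products reduced via trigonometric identities and projected onto each Fourier mode. The parity structure and the sparse Fourier support of $\eta_n,\psi_n$ at each order help substantially (for instance, $G^{(1)}[\eta_1]\psi_1$ vanishes identically, simplifying $O(a^2)$), but the $O(a^3)$ step remains delicate. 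This is why the authors relegate the full verification to Appendix A and emphasize that the instability proof itself only uses the $O(a^2)$ truncation of \eqref{expand:Stokes}.
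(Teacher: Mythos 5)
Your proposal is correct in outline and would produce the stated coefficients, but it follows a genuinely different route from the one the paper uses in Appendix~A. You work entirely inside the Zakharov--Craig--Sulem formulation \eqref{sys:Stokes}, Taylor-expanding the Dirichlet--Neumann operator $G(\eta)$ (via the shape derivative) together with the rational factor $(G(\eta)\psi+\p_x\psi\p_x\eta)^2/(1+|\p_x\eta|^2)$, then matching Fourier modes order by order. The paper instead goes back to the primitive formulation \eqref{ww:0} in the moving frame, keeping the harmonic potential $\phi(x,y)$ as a function of two variables and Taylor-expanding the Bernoulli and kinematic conditions around $y=0$. At each order in $a$ the paper gets a linear system for $\phi^n_y(x,0)$, $\phi^n_{xx}(x,0)$, $\eta^n$, $c^{n-1}$, with the harmonic extensions $\phi^n$ being finite combinations of $e^{ky}\sin kx$. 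Because the computation at second order forces $\phi^2=0$ (and at third order $\phi^3=0$), all cubic nonlinearities reduce to products of derivatives of the single elementary function $\phi^1=\sqrt{g}\,e^y\sin x$, so the bookkeeping at $O(a^3)$ is purely trigonometric and avoids the second-order variation of $G(\eta)$, which your approach must confront. Conversely, your approach stays entirely on the free surface, never introducing the two-variable $\phi$, and makes the role of the dispersion relation and the mode-$1$ solvability/normalization structure more transparent. Both are sound; the paper's is the more elementary hand computation, yours is the more intrinsic one. One small caveat: since your ansatz carries the constant offset $P/g$ inside $\eta$, the expansion $G(\eta)\psi=|D|\psi-|D|(\eta|D|\psi)-\p_x(\eta\p_x\psi)+\cdots$ should be understood as an expansion in $\eta-P/g$; this is harmless in infinite depth because $G$ is invariant under constant vertical shifts (the very fact the paper exploits to reduce $P\neq 0$ to $P=0$), but you should say so. Finally, your remark that the mode-$1$ pieces of $(\eta_3,\psi_3)$ depend on the normalization of the parameter $a$ matches the paper's closing remark in Appendix~A.
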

 \begin{proof}
Proposition  \ref{steadyexpansion} essentially goes back to Stokes \cite{Stokes}. 
For the sake of precision and completeness, we give a detailed derivation in Appendix \ref{appendix:Stokes} 
for zero Bernoulli constant, $P=0$.  
Consider now the case $P\ne 0$. Setting $\eta=\wt \eta+\frac{P}{g}$ and using the facts that 
\[
G\big(\wt \eta+\frac{P}{g}\big)\psi=G(\wt \eta)\psi,\quad -g\eta+P=-g\wt \eta,
\]
we obtain 
\[
 \begin{cases}
F_1(\eta, \psi, c)=c\p_x\wt \eta+ G(\wt \eta)\psi,\\
 F_2(\eta, \psi, c, P)=c\p_x\psi-\mez|\p_x\psi|^2  
 +\mez\frac{\big(G(\wt \eta)\psi+\p_x\psi\p_x\wt\eta\big)^2}{1+|\p_x\wt\eta|^2}-g\wt \eta,
 \end{cases}
 \]
 thereby reducing us to the case $P=0$. 
 \end{proof}
 \section{Linearization and Riemann mapping} 
 We begin with  notation for $L$-periodic functions.  Set 
 \[
 \T_L=\Rr/L\Zz,\quad \T\equiv T_{2\pi}.
 \]
Let $f:\Rr\to \Rr$ be $L$-periodic. The $L$-Fourier coefficient of $f$ is 
\bq\label{Fourier:coefficient}
\wh{f}^L(k)=\int_0^L e^{-i\frac{2\pi}{L}kx}f(x)dx\quad\forall k\in \Zz,\quad \wh{f}(k)\equiv\wh{f}^{2\pi}(k).
\eq
For $m:\Rr\to \Rr$, the Fourier multiplier $m(D_L)$ is defined by 
\bq\label{Fourier:multiplier}
m(D_L)f(x)=\frac{1}{L}\sum_{k\in \Zz} e^{i\frac{2\pi}{L}kx}m\big(k\frac{2\pi}{L}\big)\wh{f}^L(k),\quad m(D)f(x)\equiv m(D_{2\pi})f(x).
\eq
 \subsection{Linearization} 
Fix $(\eta^*, \psi^*, c^*, P^*=0)$ a  solution of \eqref{sys:Stokes} as given in Theorem \ref{theo:Stokes} with  $a=\eps$, $|\eps|\ll 1$.   The expansions in \eqref{expand:Stokes} give 
\bq\label{expand:star}
\begin{aligned}
&\eta^*=\eps \cos x+\mez \eps^2\cos(2x)+\eps^3\{\frac{1}{8}\cos x+\frac{3}{8}\cos (3x)\}+O(\eps^4),\\
& \psi^*= \eps \sqrt{g}\sin x+\frac{\sqrt{g}}{2}\eps^2\sin (2 x)+\frac{\sqrt{g}}{4} \eps^3\{3\sin x\cos(2x)+\sin x\}+O(\eps^4),\\
&c^*=\sqrt{g}+\frac{\sqrt{g}}{2} \eps^2+O(\eps^3).
\end{aligned}
\eq
 We investigate the modulational instability of $(\eta^*, \psi^*, c^*, P^*)$ subject to {\it perturbations in $\eta$ and $\psi$ but not in $c^*$ and $P^*$}.  We shall consider {\it $L$-periodic perturbations  of $\eta$ and $\psi$, where {\it $L=n_02\pi$ for some integer $n_0$}.} In order to linearize \eqref{sys:Stokes} with respect to the free surface $S$, we make use of the so-called ``shape-derivative".  
The following statement and its proof are found in \cite{Lannes}.  
 \begin{prop}  
 For $L$-periodic functions, the derivative of the map $\eta\mapsto G(\eta)\psi$ is given by
\bq\label{shapederi}
 \frac{\delta G(\eta)\psi}{\delta \eta}(\ol\eta) =-G(\eta)(B\ol \eta)-\p_x(V\ol\eta),
\eq
where 
\bq\label{BV}
B=B(\eta, \psi)=\frac{G(\eta)\psi+\p_x\psi\p_x\eta}{1+|\p_x\eta|^2},\qquad V=V(\eta, \psi)=\p_x\psi-B\p_x\eta.
\eq
In fact, $V=(\p_x\tt)(x, \eta(x))$ and $B=(\p_y\tt)(x, \eta)$, where $\tt$ solves \eqref{elliptic:G}.  Moreover, if $\eta$ is even and $\psi$ is odd, then $B$ is odd and $V$ is even.
\end{prop}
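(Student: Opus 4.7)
The proposition packages three claims, so I would organize the proof accordingly.

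First, I would identify $V$ and $B$ with the partial derivatives of $\tt$ restricted to the surface. Writing the trace identity $\tt(x,\eta(x)) = \psi(x)$ and differentiating in $x$ gives $(\p_x\tt)(x,\eta) + (\p_y\tt)(x,\eta)\p_x\eta = \p_x\psi$, while the defining formula \eqref{def:Gh} says $(\p_y\tt)(x,\eta) - (\p_x\tt)(x,\eta)\p_x\eta = G(\eta)\psi$. Treating this as a linear $2\times 2$ system for the surface values of $\p_x\tt$ and $\p_y\tt$, the determinant is $1+|\p_x\eta|^2$ and the solution is exactly $(\p_x\tt)(x,\eta)=V$ and $(\p_y\tt)(x,\eta)=B$ as stated.

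For the shape-derivative formula, I would perturb the surface as $\eta_s=\eta+s\ol\eta$ and let $\tt_s$ solve \eqref{elliptic:G} with data $\psi$ on $y=\eta_s(x)$. Define the material derivative $\dot u=\frac{d}{ds}\tt_s|_{s=0}$, which is harmonic in $\Omega$ because it is the limit of differences of harmonic functions. Differentiating the trace identity $\tt_s(x,\eta_s(x))=\psi(x)$ in $s$ at $s=0$ shows that $\dot u|_{y=\eta}=-B\ol\eta$, so $\p_y\dot u(x,\eta)-\p_x\dot u(x,\eta)\p_x\eta = -G(\eta)(B\ol\eta)$. Next I would directly differentiate $G(\eta_s)\psi=\p_y\tt_s(x,\eta_s)-\p_x\tt_s(x,\eta_s)\p_x\eta_s$ in $s$, producing five terms: the two involving $\dot u$ reassemble into $-G(\eta)(B\ol\eta)$, and the remaining three terms are $\ol\eta\,\p_{yy}\tt(x,\eta)-\ol\eta\,\p_{xy}\tt(x,\eta)\p_x\eta-\p_x\tt(x,\eta)\p_x\ol\eta$. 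Using harmonicity $\p_{yy}\tt=-\p_{xx}\tt$ and comparing with the chain-rule expansion
\[
\p_x(V\ol\eta) = \big(\p_{xx}\tt(x,\eta)+\p_{xy}\tt(x,\eta)\p_x\eta\big)\ol\eta + \p_x\tt(x,\eta)\p_x\ol\eta,
\]
the leftover terms are exactly $-\p_x(V\ol\eta)$, yielding \eqref{shapederi}.

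For the parity statement, I would use a reflection/uniqueness argument. If $\eta$ is even and $\psi$ is odd, then $\Omega$ is invariant under $x\mapsto -x$, and $\wt\tt(x,y):=-\tt(-x,y)$ is harmonic on $\Omega$, decays at $-\infty$, and has the same boundary trace as $\tt$ on $y=\eta(x)$. By uniqueness of the elliptic problem \eqref{elliptic:G}, $\wt\tt=\tt$, so $\tt$ is odd in $x$. Consequently $\p_x\tt$ is even and $\p_y\tt$ is odd in $x$; restricting to $y=\eta(x)$ (with $\eta$ even) preserves these parities, giving $V$ even and $B$ odd.

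The main obstacle is the careful bookkeeping in the middle step: one must distinguish between partial derivatives of $\tt$ evaluated at the moving point $(x,\eta_s(x))$ and total derivatives of the composite $\tt(x,\eta(x))$, and use harmonicity at precisely the right place so that the surplus terms collapse into the divergence form $\p_x(V\ol\eta)$. Once that algebraic cancellation is done correctly, the rest is straightforward.
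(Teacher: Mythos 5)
Your proof is correct. Note that the paper does not actually prove this proposition; it defers entirely to Lannes~\cite{Lannes}, where the argument proceeds by pulling the elliptic problem \eqref{elliptic:G} back to a \emph{fixed} reference domain (a strip or half-plane) and differentiating the resulting variable-coefficient elliptic equation with respect to the straightening diffeomorphism. Your route is genuinely different and more geometric: you differentiate the moving-boundary formulation directly, identify the trace of the Eulerian variation $\dot u$ as $-B\ol\eta$, feed that back through $G(\eta)$, and then use $\p_{yy}\tt=-\p_{xx}\tt$ together with the chain rule for $V(x)=(\p_x\tt)(x,\eta(x))$ to collapse the remaining surface terms into $-\p_x(V\ol\eta)$. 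The trade-off is that the pull-back method works on one fixed domain so that $s$-differentiation and the attendant elliptic estimates are entirely unproblematic, whereas your direct approach is shorter and more transparent but silently requires that the difference quotients $(\tt_s-\tt_0)/s$, a priori defined only on $\Omega\cap\Omega_s$, converge (with elliptic regularity up to the boundary) to a harmonic $\dot u$ on $\Omega$. The $2\times2$ system identifying $(V,B)=((\p_x\tt)(x,\eta),(\p_y\tt)(x,\eta))$ and the reflection argument for parity are exactly right; one cosmetic point is that the normalization in \eqref{elliptic:G} is $\na_{x,y}\tt\in L^2(\Omega)$ rather than pointwise decay at $-\infty$, but that class is manifestly preserved under $\wt\tt(x,y)=-\tt(-x,y)$ because $\Omega$ is symmetric in $x$, so your uniqueness argument goes through unchanged.
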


\begin{lemm}
 We have
\begin{align}\label{dF1}
&\frac{\delta F_1(\eta, \psi, c)}{\delta(\eta, \psi)}(\ol\eta, \ol \psi)=\p_x\big((c-V)\ol \eta\big)+G(\eta)\big(\ol \psi-B\ol\eta\big),\\ \label{dF2}
&\frac{\delta F_2(\eta, \psi, c, P)}{\delta(\eta, \psi)}(\ol\eta, \ol \psi)=(c-V)\p_x\ol \psi+{B}G(\eta)(\ol \psi-{B}\ol \eta)-{B}\p_x{V}\ol\eta-g\ol \eta
\end{align}
together with the identity 
\bq\label{identity}
\frac{\delta F_2(\eta, \psi, c, P)}{\delta(\eta, \psi)}(\ol\eta, \ol \psi)-B\frac{\delta F_1(\eta, \psi, c)}{\delta(\eta, \psi)}(\ol\eta, \ol \psi)=-\big(g+({V}-c)\p_x{B} \big)\ol\eta+(c-{V})\p_x(\ol \psi-B\ol \eta),
\eq
where $B=B(\eta, \psi)$ and $V=V(\eta, \psi)$ are given by \eqref{BV}.  
\end{lemm}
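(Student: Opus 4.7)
The plan is to verify \eqref{dF1}, \eqref{dF2}, and \eqref{identity} by direct differentiation, using the shape-derivative formula \eqref{shapederi} as the only non-elementary input.

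I would first establish \eqref{dF1}. Since $F_1=c\p_x\eta+G(\eta)\psi$ is linear in $\psi$, the $\ol\psi$-variation is immediately $G(\eta)\ol\psi$. The $\ol\eta$-variation of $c\p_x\eta$ is $c\p_x\ol\eta$, while \eqref{shapederi} supplies $-G(\eta)(B\ol\eta)-\p_x(V\ol\eta)$ for the $\ol\eta$-variation of $G(\eta)\psi$. Summing and grouping the transport and Dirichlet--Neumann pieces separately produces $\p_x((c-V)\ol\eta)+G(\eta)(\ol\psi-B\ol\eta)$.

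For \eqref{dF2}, I would abbreviate $N=G(\eta)\psi+\p_x\psi\p_x\eta$ and $D=1+|\p_x\eta|^2$, so that $B=N/D$ and
\[
F_2=c\p_x\psi-\tfrac12|\p_x\psi|^2+\tfrac12\,N^2/D-g\eta+P.
\]
The quotient rule gives $\delta(N^2/D)=2B\,\delta N-B^2\,\delta D$. Using \eqref{shapederi} and the Leibniz rule, the $\ol\eta$-variation of $N$ equals $-G(\eta)(B\ol\eta)+B\p_x\eta\,\p_x\ol\eta-\p_xV\,\ol\eta$ after invoking $\p_x\psi-B\p_x\eta=V$, while $\delta_\eta D=2\p_x\eta\,\p_x\ol\eta$. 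The cross term $2B\cdot B\p_x\eta\,\p_x\ol\eta$ in $2B\,\delta N$ then cancels exactly against $B^2\,\delta D$, leaving $\delta_\eta(\tfrac12N^2/D)=-BG(\eta)(B\ol\eta)-B\p_xV\,\ol\eta$. Combining with the $\ol\psi$-variations (whose $\p_x\ol\psi$ contributions combine to $(c-\p_x\psi+B\p_x\eta)\p_x\ol\psi=(c-V)\p_x\ol\psi$ and whose $G(\eta)\ol\psi$ piece completes $G(\eta)\ol\psi-G(\eta)(B\ol\eta)=G(\eta)(\ol\psi-B\ol\eta)$) and with $-g\ol\eta$ from $-g\eta$, I obtain \eqref{dF2}.

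Finally, \eqref{identity} is pure algebra. Subtracting $B\cdot\delta F_1$ from $\delta F_2$ kills the Dirichlet--Neumann term and leaves $(c-V)\p_x\ol\psi-B\p_x((c-V)\ol\eta)-B\p_xV\,\ol\eta-g\ol\eta$. Expanding by Leibniz and using $\p_x(c-V)=-\p_xV$, the two $\pm B\p_xV\,\ol\eta$ terms cancel, giving $(c-V)(\p_x\ol\psi-B\p_x\ol\eta)-g\ol\eta$. Writing $\p_x\ol\psi-B\p_x\ol\eta=\p_x(\ol\psi-B\ol\eta)+\p_xB\,\ol\eta$ yields the right-hand side of \eqref{identity}. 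The main obstacle is the bookkeeping in the middle step: the clean emergence of the Alinhac good unknown $\ol\psi-B\ol\eta$ hinges on the precise cancellation between $B^2\,\delta D$ and the cross term in $2B\,\delta N$, which is the structural reason the shape-derivative framework meshes so naturally with this good unknown.
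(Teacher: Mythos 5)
Your proof is correct and takes essentially the same route as the paper: direct differentiation of $F_1$ and $F_2$, using the shape-derivative formula \eqref{shapederi} as the only non-trivial input, followed by algebraic simplification to expose the good unknown $\ol\psi-B\ol\eta$. Your reorganization through the quotient rule $\delta(N^2/D)=2B\,\delta N-B^2\,\delta D$ and the explicit identification of the cancellation between $B^2\,\delta D$ and the cross term in $2B\,\delta N$ is a somewhat cleaner way of packaging the same computation the paper carries out term by term.
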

\begin{proof}  
We note that \eqref{dF1} is a direct consequence of \eqref{shapederi}. As for $F_2$ we first compute  
\begin{align*}
&\frac{\delta}{\delta(\eta, \psi)}\frac{\big(G(\eta)\psi+\p_x\psi\p_x\eta\big)^2}{1+|\p_x\eta|^2}(\ol\eta, \ol \psi)\\
&=2\frac{\big(G(\eta)\psi+\p_x\psi\p_x\eta\big)}{1+|\p_x\eta|^2}\Big[G(\eta)\ol \psi-G(\eta)({B}\ol \eta)-\p_x({V}\ol\eta)+\p_x\psi\p_x\ol\eta+\p_x\ol\psi\p_x\eta\Big]
\\
&\quad -\big(G(\eta)\psi+\p_x\psi\p_x\eta\big)^2\frac{2\p_x\eta\p_x\ol \eta}{(1+|\p_x\eta|^2)^2}\\
&=2{B}\Big[G(\eta)\ol \psi-G(\eta)({B}\ol \eta)-\p_x({V}\ol\eta)+\p_x\psi\p_x\ol\eta+\p_x\ol\psi\p_x\eta\Big]-2{B}^2\p_x\eta\p_x\ol \eta.
\end{align*}
Consequently
\[
\begin{aligned}
&\frac{\delta F_2(\eta, \psi, c, P)}{\delta(\eta, \psi)}(\ol\eta, \ol \psi)\\
&=c\p_x\ol \psi-\p_x\psi\p_x\ol \psi+{B}G(\eta)\ol \psi-{B}G(\eta)({B}\ol \eta)-{B}\p_x{V}\ol\eta -{B}{V}\p_x\ol \eta\\
&\qquad +{B}\p_x\psi\p_x\ol \eta+{B}\p_x\ol\psi\p_x\eta-{B}^2\p_x\eta\p_x\ol \eta-g\ol \eta\\
&=c\p_x\ol \psi -\p_x\ol\psi\big(\p_x\psi-{B}\p_x\eta)+{B}G(\eta)\ol \psi-{B}G(\eta)({B}\ol \eta)-{B}\p_x{V}\ol\eta +{B}(\p_x\psi-{V})\p_x\ol \eta\\
&\quad-{B}^2\p_x\eta\p_x\ol \eta-g\ol \eta\\
&=c\p_x\ol \psi-\p_x\ol\psi {V}+{B}G(\eta)\ol \psi-{B}G(\eta)({B}\ol \eta)-{B}\p_x{V}\ol\eta+{B}^2\p_x\eta\p_x\ol \eta-{B}^2\p_x\eta\p_x\ol \eta-g\ol \eta\\
&=(c-V)\p_x\ol \psi+{B}G(\eta)(\ol \psi-{B}\ol \eta)-{B}\p_x{V}\ol\eta-g\ol \eta
\end{aligned}
\]
which proves \eqref{dF2}. Finally, a combination of \eqref{dF1} and \eqref{dF2} gives \eqref{identity}.
\end{proof}
 From \eqref{dF1} and \eqref{dF2} we obtain  the linearized system for \eqref{ww:c} about $(\eta^*, \psi^*, c^*, P^*)$ with $(c^*, P^*)$ being fixed:
\begin{align}
&\p_t \ol\eta=\frac{\delta F_1(\eta^*, \psi^*, c^*)}{\delta(\eta, \psi)}(\ol\eta, \ol \psi)=\p_x\big((c^*-V^*)\ol\eta\big)+G(\eta^*) (\ol\psi-{B^*} \ol\eta),\label{lin:eta}\\
&\p_t\ol \psi=\frac{\delta F_2(\eta^*, \psi^*, c^*, P^*)}{\delta(\eta, \psi)}(\ol\eta, \ol \psi)=(c^*-{V^*})\p_x\ol\psi +{B^*}G(\eta^*)(\ol\psi-{B^*} \ol\eta)-{B^*}\p_x{V^*}\ol\eta-g\ol\eta,\label{lin:psi}
\end{align}
where $B^*$ and $V^*$ are given in terms of $\eta^*$ and $\psi^*$ as in \eqref{BV},  and $\ol \eta$ and $\ol \psi$ are $L$-periodic. By virtue of identity \eqref{identity}, the   {\it good unknowns} (\`a la Alinhac \cite{Alinhac, AlaMet})
\bq\label{def:v12}
v_1=\ol\eta, \quad v_2=\ol\psi-{B^*}\ol\eta,
\eq
 satisfy
 \begin{align}\label{eq:v1}
&\p_t v_1=\p_x\big((c^*-{V^*})v_1\big)+G(\eta^*)v_2,\\ \label{eq:v2}
&\p_t v_2=-\big(g+({V^*}-c^*)\p_x{B^*} \big)v_1+(c^*-{V^*})\p_xv_2.
\end{align}
The good unknowns \eqref{def:v12} have been successfully used in well-posedness and stability results 
for the nonlinear water wave system in spaces of finite regularity. See \cite{Lannes, AlaMet, ABZ1, ABZ3, RouTzv}.
\subsection{Conformal mapping} 
Due to the nontrivial  surface $\eta^*$, the Dirichlet-Neumann operator $G(\eta^*)$ appearing in the 
linearized system \eqref{eq:v1}-\eqref{eq:v2} is not explicit.  
 Analogously to \cite{PegoSun},  we  use the  Riemann mapping in the following proposition to flatten 
the free surface $S=\{(x, \eta^*(x)): x\in \Rr\}$.
 \begin{prop}\label{prop:z12}
 There exists a  holomorphic bijection $z(x, y)=z_1(x, y)+iz_2(x, y)$ from $\Rr^2_-=\{(x, y)\in \Rr^2: y<0\}$  onto $\{(x, y)\in \Rr^2: y<\eta^*(x)\}$ with the following properties.  
 \begin{itemize}
 \item[(i)] $z_1(x+2\pi, y)=2\pi +z_1(x, y)$ and $z_2(x+2\pi, y)=z_2(x, y)$ for all $(x, y)\in \Rr^2_-$; 
 \newline $z_1$ is odd in $x$ and $z_2$ is even in $x$;
 \item[(ii)] $z$ maps  $\{(x, 0): x\in \Rr\}$ onto $\{(x, \eta^*(x)): x\in \Rr\}$;\\
 \item[(iii)] Defining the ``Riemann stretch" as 
 \bq\label{def:zeta}
 \zeta(x)=z_1(x, 0),
 \eq 
we have the Fourier expansion 
 \bq\label{z1}
z_1(x, y)=x-\frac{i}{2\pi}\sum_{k\ne 0}e^{ikx}\mathrm{sign}(k)e^{|k|y}\widehat{\eta^*\circ \zeta}(k)\quad\forall (x, y)\in \Rr^2_-,
\eq
where 
\[
\wh{f}(\xi)=\int_{\Rr}e^{-i\xi x}f(x)dx.
\]
\item[(iv)] $\| \na_{x, y}(z_1-x)\|_{ L^\infty(\Rr^2_-)}+ \|\na_{x, y}(z_2-y)\|_{ L^\infty(\Rr^2_-)}\le C\eps$.
 \end{itemize}
 \end{prop}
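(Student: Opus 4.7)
The plan is to construct $z$ via a scalar fixed-point equation on the flat boundary $\{y=0\}$ and then extend it holomorphically into $\Rr^2_-$. Write $z(x,y)=(x+u(x,y))+i(y+v(x,y))$ and seek $u+iv$ holomorphic in $\Rr^2_-$, $2\pi$-periodic in $x$, bounded, and vanishing as $y\to-\infty$. Any such function is uniquely determined by its boundary trace: setting $\rho(x):=u(x,0)$ and $\sigma(x):=v(x,0)$, the holomorphy together with the parities to be enforced (odd $u$, even $v$) translate into $\rho(x)=\sum_{k>0}r_k\sin(kx)$ and $\sigma(x)=\sum_{k>0}r_k\cos(kx)$ with the same real coefficients $r_k$. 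Denote the resulting map $\sigma\mapsto\rho$ by the harmonic conjugation $\mathcal{J}$, a bounded isomorphism between mean-zero even and mean-zero odd periodic Sobolev spaces $H^s(\T)$. The geometric requirement (ii) that $z(x,0)\in S$ becomes $\sigma(x)=\eta^*(x+\rho(x))$, so after substitution the single scalar equation
\bq\label{proposal:FP}
\rho=\mathcal{J}\bigl[\eta^*(\,\cdot\,+\rho(\,\cdot\,))\bigr]
\eq
determines the boundary stretch $\zeta(x)=x+\rho(x)$; note that the mean of $\eta^*\circ\zeta$ vanishes because $\eta^*$ is mean-zero when $P=0$, so $\mathcal{J}$ is applicable.

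The next step is to solve \eqref{proposal:FP} by contraction mapping in a small ball of odd functions in $H^s(\T)$ for some $s>3/2$. By Proposition \ref{steadyexpansion} we have $\|\eta^*\|_{H^{s+1}}=O(\eps)$, and $H^s(\T)$ is a Banach algebra, so the composition $\rho\mapsto\eta^*(\,\cdot\,+\rho)$ is Lipschitz with constant $O(\eps)$. Combined with the boundedness of $\mathcal{J}$, the right-hand side of \eqref{proposal:FP} is a strict contraction on the ball $\{\|\rho\|_{H^s}\le K\eps\}$ for $\eps$ sufficiently small, producing a unique solution $\rho(\eps)$ with $\|\rho(\eps)\|_{H^s}\le K\eps$. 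One then defines $z_1$ and $z_2$ as the harmonic extensions of $x+\rho$ and $\sigma=\mathcal{J}^{-1}\rho$ to $\Rr^2_-$; in Fourier these extensions are given by the $e^{|k|y}$-weighted series, which yields formula \eqref{z1} after expanding $\sigma=\eta^*\circ\zeta$ as a Fourier series on $\T$ and regrouping cosine/sine modes to produce the $\mathrm{sign}(k)$ factor. Periodicity, the parities in (i), and the trace condition (ii) are immediate from the construction, while the bound (iv) follows from $\|\rho\|_{H^s}+\|\sigma\|_{H^s}=O(\eps)$, Sobolev embedding, and the fact that the Poisson/Fourier extension preserves boundedness of one derivative down into $\Rr^2_-$.

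The main technical point will be global bijectivity. Local injectivity is automatic: by (iv) the complex derivative $\partial_z z=1+\partial_x u-i\partial_x v$ is uniformly within $O(\eps)$ of $1$ in $L^\infty(\Rr^2_-)$, so the Jacobian is bounded away from zero. For global bijectivity onto $\{y<\eta^*(x)\}$, I would pass to the quotient by the $2\pi$-translation, reducing the problem to a holomorphic map between two topological half-cylinders that agrees with the identity at $y\to-\infty$ and respects the parametrization of the common boundary; an argument-principle/winding-number count, together with continuity in $\eps$ (with the map equal to the identity at $\eps=0$), then gives injectivity and surjectivity. An alternative, perhaps cleaner, route is to set up the analogous fixed-point problem for the \emph{inverse} conformal map simultaneously and identify the two by uniqueness of the contraction. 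In either approach this topological step is the subtle piece; the analytic existence and the explicit Fourier formula follow straightforwardly from the contraction in \eqref{proposal:FP}.
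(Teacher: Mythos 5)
Your approach is genuinely different from the paper's and, in spirit, more elementary. The paper constructs the \emph{inverse} map $Z = Z_1 + iZ_2 : \{y<\eta^*\} \to \Rr^2_-$ first: it flattens the fluid domain by the diffeomorphism $(x, Y)\mapsto (x, \rho(x,Y))$, solves a divergence-form elliptic problem via Lax--Milgram in a weighted Sobolev space on the half-cylinder to obtain $Z_2$, recovers $Z_1$ from the Cauchy--Riemann relations, settles bijectivity by the maximum principle together with the intermediate value theorem, and then sets $z = Z^{-1}$; the formula \eqref{z1} is read off afterward from the Dirichlet data $\eta^*\circ\zeta$. You instead build $z$ directly, reducing everything to a single scalar fixed-point equation for the boundary perturbation $\rho$ on the circle, with the periodic harmonic-conjugation operator $\mathcal{J}$ playing the role of the elliptic solver; the Fourier formula \eqref{z1} then comes essentially for free. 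This avoids the PDE machinery and is quite transparent, but, as you yourself flag, it shifts the burden onto global bijectivity: the degree/winding argument on the quotient cylinders (or the parallel fixed point for $z^{-1}$) still has to be carried out, whereas the paper's surjectivity argument is essentially one line once $Z$ is in hand.

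There is also a concrete error in the middle of your construction. You assert that ``the mean of $\eta^*\circ\zeta$ vanishes because $\eta^*$ is mean-zero when $P=0$,'' and this is what allows $\mathcal{J}$ to be applied to $\eta^*\circ\zeta$ in your fixed-point equation. The assertion is false: precomposition with $\zeta$ does not preserve the mean, since $\zeta'\ne 1$. From \eqref{expand:star} and \eqref{zeta1},
\[
\eta^*(\zeta(x)) = \eps\cos x + \eps^2\Big(\cos(2x) - \tfrac12\Big) + O(\eps^3),
\]
which has mean $-\tfrac12\eps^2 + O(\eps^3)\ne 0$. Equivalently, the paper's expression for $z_2$ retains the $k=0$ term, so $z_2-y$ tends to the nonzero constant $\tfrac{1}{2\pi}\widehat{\eta^*\circ\zeta}(0)$ as $y\to-\infty$, while $z_1-x$ has no such constant (consistent with the absence of $k=0$ in \eqref{z1}). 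The fix is small and does not disturb the rest of your argument: allow a free constant $a_0$ in $\sigma$ so that $\sigma=a_0+\sum_{k\ge1}a_k\cos(kx)$ while $\rho=\sum_{k\ge1}a_k\sin(kx)$, let $\mathcal{J}$ annihilate the mean, solve the (unchanged) contraction for $\rho$, and read off $a_0$ a posteriori as the mean of $\eta^*\circ\zeta$. Since $a_0$ never enters $z_1$, properties (i)--(iv) are unaffected. But as written your fixed-point equation applies $\mathcal{J}$ to a function outside its stated domain, and the justification offered is wrong.
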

    \begin{figure}
  \centering
  \includegraphics[width=.5\linewidth]{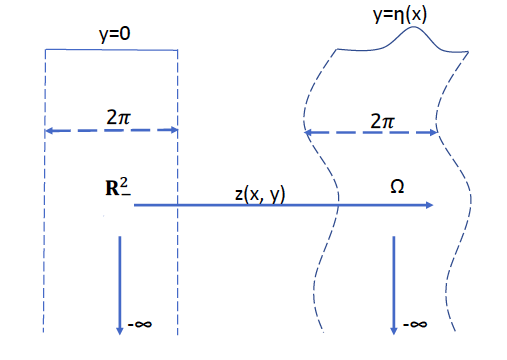}
  \caption{The Riemann mapping $z=z_1+iz_2$}
\end{figure}
 We postpone the proof of Proposition \ref{prop:z12} to Appendix \ref{Appendix:Riemann}. Compared to the finite depth case in \cite{PegoSun}, the proof of Proposition \ref{prop:z12} requires  decay properties as $y\to -\infty$. 
 
 In terms of the Riemann stretch $\zeta$, we can rewrite the Dirichlet-Neumann operator $G(\eta^*)$ as follows.  Define two operators 
 \bq
\zeta_\sharp f=f\circ \zeta ,  \qquad   \zeta_*f=\zeta' (f\circ \zeta), 
\eq  
so that  $\zeta_*\p_xf=\p_x(\zeta_\sharp f)$ for $f:\Rr\to \Rr$. 
\begin{lemm}\label{lemm:Riemann}
 For $f\in  H^1(\T_L)$ we have
\bq\label{DN:zeta}
G(\eta^*)f=\p_x\big(\zeta^{-1}_\sharp \mathcal{H}_L(\zeta_\sharp f)\big)
\eq
where  $\wh{\mathcal{H}_Lu}^L(k)=-i\mathrm{sign}(k)\wh{u}^L(\xi)$ is the Hilbert transform. The sign function $\sign:\Rr\to \{-1, 0, 1\}$ is defined as  
\bq
\sign(x)=
\begin{cases}
1\quad\text{if}~x>0, \\
0\quad\text{if}~x=0,\\
-1\quad\text{if}~x<0.
\end{cases}
\eq
 \end{lemm}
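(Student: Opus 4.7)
The plan is to use the Riemann mapping $z=z_1+iz_2$ from Proposition \ref{prop:z12} to conformally flatten the free surface $S$ and then invoke the known explicit form of the Dirichlet-Neumann operator on the half-plane. Given $f\in H^1(\T_L)$, let $\tt$ solve \eqref{elliptic:G} with $\tt|_{y=\eta^*(x)}=f$, and set $\wt\tt(x,y)=\tt(z_1(x,y),z_2(x,y))$ on $\Rr^2_-$. Since $z$ is holomorphic, $\Delta_{x,y}\wt\tt=0$. Because $z(x,0)=\zeta(x)+i\eta^*(\zeta(x))$ by Proposition \ref{prop:z12}(ii), the boundary trace is $\wt\tt(x,0)=f(\zeta(x))=\zeta_\sharp f(x)$. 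The bi-Lipschitz bound in Proposition \ref{prop:z12}(iv), combined with conformal invariance of the Dirichlet integral, transfers $\na_{x,y}\tt\in L^2(\Omega^*)$ to $\na_{x,y}\wt\tt\in L^2(\Rr^2_-)$, so $\wt\tt$ is the unique finite-energy harmonic extension of $\zeta_\sharp f$ to the lower half-plane.

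On the flat half-plane, a direct Fourier-series computation yields $\p_y\wt\tt(x,0)=|D_L|(\zeta_\sharp f)(x)=\p_x\mathcal{H}_L(\zeta_\sharp f)(x)$; the second equality follows because both Fourier multipliers equal $|k\tfrac{2\pi}{L}|$ on each mode. To convert the left side into $G(\eta^*)f$, I will apply the chain rule together with the Cauchy-Riemann equations $\p_yz_1=-\p_xz_2$ and $\p_yz_2=\p_xz_1$ for the holomorphic map $z$. The boundary traces $\p_xz_1(x,0)=\zeta'(x)$ and $\p_xz_2(x,0)=(\eta^*)'(\zeta(x))\zeta'(x)$ then yield
\[
\p_y\wt\tt(x,0)=\zeta'(x)\big[\p_y\tt-(\eta^*)'(\zeta)\p_x\tt\big]\big(z(x,0)\big)=\zeta'(x)\cdot\big(G(\eta^*)f\big)(\zeta(x))=\zeta_*\big(G(\eta^*)f\big)(x),
\]
where the middle equality uses the definition \eqref{def:Gh}. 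Comparing the two expressions for $\p_y\wt\tt(x,0)$ gives $\zeta_*(G(\eta^*)f)=\p_x\mathcal{H}_L(\zeta_\sharp f)$.

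It remains to solve for $G(\eta^*)f$. The key algebraic step is to apply $\zeta^{-1}_\sharp$ to both sides and use the identities $\zeta^{-1}_\sharp(\zeta'\cdot\zeta_\sharp g)=\zeta^{-1}_\sharp(\zeta')\cdot g$ and $\zeta^{-1}_\sharp(\p_x h)=\zeta^{-1}_\sharp(\zeta')\cdot\p_x(\zeta^{-1}_\sharp h)$, the latter coming from the standard chain rule for $h\circ\zeta^{-1}$. Dividing through by the common factor $\zeta^{-1}_\sharp(\zeta')$, which is bounded away from zero for small $\eps$ by Proposition \ref{prop:z12}(iv), yields the claimed identity $G(\eta^*)f=\p_x\zeta^{-1}_\sharp\mathcal{H}_L(\zeta_\sharp f)$. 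I expect the main obstacle to be the careful bookkeeping of the composition operators $\zeta_\sharp,\zeta_*,\zeta^{-1}_\sharp$ under the simultaneous use of the chain rule and the Cauchy-Riemann equations; the decay transfer at $y=-\infty$ and the flat-case identity $|D_L|=\p_x\mathcal{H}_L$ are routine once Proposition \ref{prop:z12} has been established.
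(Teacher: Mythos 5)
Your proposal is correct and follows essentially the same route as the paper's proof in Appendix B.3: pull back the harmonic extension $\tt$ through the Riemann mapping to obtain a flat-half-plane problem with boundary data $\zeta_\sharp f$, compute $\p_y$ at the boundary both via the explicit Fourier series (giving $|D_L|(\zeta_\sharp f)=\p_x\mathcal{H}_L(\zeta_\sharp f)$) and via the chain rule with the Cauchy--Riemann relations (giving $\zeta'\cdot\zeta_\sharp(G(\eta^*)f)$), then unwind the conjugation by $\zeta$. Your final algebraic step is worded slightly differently (applying $\zeta_\sharp^{-1}$ to both sides and cancelling the common factor $\zeta_\sharp^{-1}(\zeta')$) but is the same computation as the paper's invocation of the identity $\zeta_\sharp^{-1}\big(\tfrac{1}{\zeta'}\p_{\ul x}g\big)=\p_x(\zeta_\sharp^{-1}g)$.
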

The proof of Lemma \ref{lemm:Riemann} is also given in Appendix \ref{Appendix:Riemann}.   

By virtue of Lemma \ref{lemm:Riemann},  for any functions $f_1,\, f_2 \in H^1(\T_L)$ a direct calculation yields the identities 
\bq\label{identity:zeta}
\begin{aligned}
&\zeta_*\Big(\p_x\big((c^*-{V^*})f_1\big)+G(\eta^*)f_2\Big)=\p_x\big(p(x)\zeta_*f_1\big)+|D_L|(\zeta_\sharp f_2),\\
&\zeta_\sharp\Big(-\big(g+({V^*}-c^*)\p_x{B^*} \big)f_1+(c^*-{V^*})\p_xf_2\Big)  
=  -\frac{g +q(x)}{\zeta'(x)}\zeta_*f_1+p(x)\p_x\zeta_\sharp f_2,
\end{aligned}
\eq
where
\bq\label{def:pq}
p=\frac{c^*-\zeta_\sharp {V^*}}{\zeta'},\quad q=-p\p_x(\zeta_\sharp B^*).
\eq
Since $B^*$ and $\zeta$ are odd and $V^*$ is even, it follows that $p$ and $q$ are even.  
  We apply $\zeta_*$ to \eqref{eq:v1} and $\zeta_\sharp$ to \eqref{eq:v2}, making use of \eqref{identity:zeta}.  
We rewrite the result as 
\begin{align}\label{eq:w12}
&\p_t w_1=\p_x\big(p(x)w_1\big)+|D_L|w_2,\\  \label{eq:w2}
&\p_t w_2=-\frac{g +q(x)}{\zeta'(x)}w_1+p(x)\p_xw_2,
\end{align}
 where
\bq\label{def:w12}
w_1=\zeta_*v_1,\quad w_2=\zeta_\sharp v_2
\eq
 are $L$-periodic. The Dirichlet-Neumann operator $G(\eta^*)$ in  \eqref{eq:v1} has thus been converted 
to the explicit Fourier multiplier $|D_L|$.
\subsection {Spectral modulational instability} 
 Modulational instability is the instability induced by long-wave perturbations. Therefore, we seek solutions of the linearized system \eqref{eq:w12}-\eqref{eq:w2} of the form $w_j(x, t)=e^{\ld t}e^{i\mu x}u_j(x)$, where $u_j(x)$ are $2\pi$-periodic. We assume $\mu=\frac{m_0}{n_0}$ is a {\it small rational number} and choose $L=n_0 2\pi \gg 2\pi$, so that $w_j(\cdot, t)$ is $L$-periodic. The following lemma avoids the Bloch transform. 
\begin{lemm}
For any  $f\in L^2(\T)$ we have 
\bq\label{conjugate:mu}
e^{-i\mu x}|D_L|(e^{i\mu x}f(x))=|D_L+\mu|f(x)=|D+\mu|f(x).
\eq
\end{lemm}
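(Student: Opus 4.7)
The strategy is a direct Fourier-series computation, using the key arithmetic fact that $\mu = m_0/n_0$ and $L = 2\pi n_0$ are chosen precisely so that $e^{i\mu x}$ is itself $L$-periodic (since $\mu L = 2\pi m_0 \in 2\pi \Zz$), hence $e^{i\mu x}f(x)$ makes sense as an element of $L^2(\T_L)$ whenever $f\in L^2(\T)$, and $|D_L|$ may legitimately be applied to it.

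First, I would expand $f$ in its $2\pi$-Fourier series $f(x) = \frac{1}{2\pi}\sum_{k\in\Zz}e^{ikx}\wh{f}(k)$ and write
\[
e^{i\mu x}f(x) = \frac{1}{2\pi}\sum_{k\in\Zz} e^{i(k+\mu)x}\wh{f}(k) = \frac{1}{2\pi}\sum_{k\in\Zz} e^{i\frac{2\pi(kn_0+m_0)}{L}x}\wh{f}(k),
\]
which exhibits the right-hand side as an $L$-periodic Fourier series in which only the frequencies $\frac{2\pi}{L}(kn_0+m_0)$, $k\in\Zz$, appear. Applying the Fourier multiplier $|D_L|$ termwise multiplies the $k$-th coefficient by $\left|\frac{2\pi(kn_0+m_0)}{L}\right| = |k+\mu|$, so
\[
|D_L|\bigl(e^{i\mu x}f(x)\bigr) = \frac{1}{2\pi}\sum_{k\in\Zz} |k+\mu|\,e^{i(k+\mu)x}\,\wh{f}(k).
\]
Multiplying by $e^{-i\mu x}$ cancels the phase shift and produces
\[
e^{-i\mu x}|D_L|\bigl(e^{i\mu x}f(x)\bigr) = \frac{1}{2\pi}\sum_{k\in\Zz}|k+\mu|\,e^{ikx}\,\wh{f}(k),
\]
which is by definition $|D+\mu|f(x)$, the $2\pi$-Fourier multiplier with symbol $|\xi+\mu|$. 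This establishes the second equality in \eqref{conjugate:mu}.

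For the first equality, I would view the $2\pi$-periodic $f$ as an element of $L^2(\T_L)$ and observe that its $L$-Fourier coefficients $\wh{f}^L(k)$ vanish unless $k\in n_0\Zz$, in which case $\wh{f}^L(n_0 j)=n_0\wh{f}(j)$. Applying $|D_L+\mu|$ then gives, by \eqref{Fourier:multiplier},
\[
|D_L+\mu|f(x) = \frac{1}{L}\sum_{j\in\Zz}\left|j + \mu\right| e^{ijx} \cdot n_0\wh{f}(j) = \frac{1}{2\pi}\sum_{j\in\Zz}|j+\mu|e^{ijx}\wh{f}(j) = |D+\mu|f(x),
\]
which matches the expression just computed. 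There is no genuine obstacle here; the only subtlety, which is really just bookkeeping, is ensuring the two Fourier normalizations ($2\pi$- versus $L$-period) are reconciled and that the shifted frequencies $k+\mu$ match the allowed $L$-frequencies $\frac{2\pi(kn_0+m_0)}{L}$. Density of trigonometric polynomials in $L^2(\T)$ lets the argument extend from finite sums to arbitrary $f\in L^2(\T)$ by continuity.
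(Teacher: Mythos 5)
Your proof is correct, and in spirit it is the same elementary Fourier-series bookkeeping that the paper uses, so there is no gap. The organizational choices do differ in two small ways worth noting. First, the paper establishes the chain by proving $e^{-i\mu x}|D_L|(e^{i\mu x}f)=|D_L+\mu|f$ via the coefficient-shift identity $\wh{(e^{i\mu x}f)}^L(k)=\wh{f}^L(k-m_0)$ and then $|D_L+\mu|f=|D+\mu|f$, whereas you prove $e^{-i\mu x}|D_L|(e^{i\mu x}f)=|D+\mu|f$ directly (by expanding $e^{i\mu x}f$ as a displaced-frequency $L$-periodic series) and separately prove $|D_L+\mu|f=|D+\mu|f$; both routes close the chain of three equalities. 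Second, for the latter equality the paper factors the statement out into the \emph{general} identity \eqref{FM:L2pi} (valid for an arbitrary Fourier multiplier $b$) and proves it by computing the Fourier transform of $f$ on $\Rr$ as a lattice of Dirac masses and comparing $b(D_L)$ with $b(D)$ as tempered distributions, while you prove only the special case at hand by reading off the $L$-Fourier coefficients of a $2\pi$-periodic $f$ ($\wh f^L(k)=0$ unless $n_0\mid k$, and $\wh f^L(n_0 j)=n_0\wh f(j)$). The paper's version buys a reusable lemma; yours is more self-contained and elementary. One small point to tighten: the closing remark that density of trigonometric polynomials extends the identity to all of $L^2(\T)$ ``by continuity'' should be phrased with the caveat that $|D_L|$ and $|D+\mu|$ are unbounded on $L^2$, so the limit must be taken in a distributional (or Sobolev) topology in which both sides converge -- precisely the qualification the paper makes by restricting to multipliers ``well-defined as tempered distributions.''
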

\begin{proof}
The first equality follows easily from the fact that $\wh{(e^{i\mu x}f)}^L(k)=\wh{f}^L(k-m_0)$. The second inequality follows from the general fact that if $f$ is $2\pi$-periodic, then for any  Fourier multiplier $b$ we have 
\bq\label{FM:L2pi}
b(D_{L})f(x)=b(D)f(x),\quad L=n_02\pi,
\eq
provided that they are well-defined as tempered distributions.  To prove \eqref{FM:L2pi}, let $\cF$ denote the Fourier transform and $\cF^{-1}$ the inverse Fourier transform, namely 
\[
\cF(f)(\xi)=\int_\Rr e^{-i\xi x}f(x)dx,\quad \cF^{-1}(f)(x)=\frac{1}{2\pi}\cF(f)(-x).
\]
For   $f\in L^2(\T)\subset L^2(\T_L)$ we have the inversion formula
\[
f(x)=\frac{1}{L}\sum_{k\in \Zz}e^{i\frac{2\pi}{L}kx}\wh{f}^L(k)\quad\text{in } L^2(\T_L)\subset \mathscr{S}'(\Rr),
\]
where $\mathscr{S}'(\Rr)$ is the space of tempered distributions. It follows that
\[
\cF(f)(\xi)=\frac{2\pi}{L}\sum_{k\in \Zz}\delta(\xi-\frac{2\pi}{L}k)\wh{f}^L(k)\in \mathscr{S}'(\Rr),
\]
where $\delta$ denotes the Dirac distribution centered at the origin. Consequently,
\[
\cF^{-1}(b\cF(f))(x)=\frac{1}{L}\sum_{k\in \Zz}e^{i\frac{2\pi}{L}kx}b\big(\frac{2\pi}{L}kx\big)\wh{f}^L(k)=b(D_L)f(x).
\]
Since $f$ is also $2\pi$-periodic, the preceding formula also holds for $L$ replaced by $2\pi$. Since the left side is independent of $L$, \eqref{FM:L2pi} follows.
\end{proof}
With the aid of \eqref{conjugate:mu}, from \eqref{eq:w12}-\eqref{eq:w2} we arrive at the pseudodifferential spectral problem
\bq\label{eq:u12}
\ld U=\mathcal{L}_{\mu, \eps}U:=
\begin{bmatrix}
p(i\mu+\p_x) +\p_x p & |D+\mu|\\
-\frac{g+q}{\zeta'} &p(i\mu+\p_x)
\end{bmatrix}
U,
\quad U=(u_1, u_2)^T,
\eq
where $U$ is $2\pi$-periodic.  The subscript $\eps$ indicates that the variable coefficients $p(x)$ and $q(x)$ depend upon $\eps$ through the Stokes wave. We regard $\mathcal{L}_{\mu, \eps}$ as a continuous operator from $(H^1(\T))^2$ to $(L^2(\T))^2$. The complex inner product of $L^2(\T)$ is denoted by
\[
(f_1, f_2)=\int_\T f_1(x)\ol{f_2}(x)dx.
\]
\begin{defi}[Spectral modulational instability]
If there exists a small  rational number $\mu$ such that the operator $\mathcal{L}_{\mu ,\eps}$ has an eigenvalue with positive real part, we say that the Stokes wave $(\eta^*, \psi^*, c^*, P^*=0)$ is subject to the spectral modulational (or Benjamin--Feir) instability. 
\end{defi}
 In what follows, we shall study \eqref{eq:u12} with $\mu$ being a small real number and prove that $\cL_{\mu, \eps}$ has an  eigenvalue with positive real part for all sufficiently small real numbers $\mu$, including in particular small rational numbers. We note that  $\mathcal{L}_{\mu, \eps}$ has the Hamiltonian structure
\bq\label{JK}
\mathcal{L}_{\mu, \eps}=J \mathcal{K}_{\mu, \eps}
\eq
where  $J=\begin{bmatrix} 0 & 1\\
-1 &0
\end{bmatrix}$ and  
\bq\label{def:Kmu}
\mathcal{K}_{\mu, \eps}=
\begin{bmatrix}
 \frac{g+q}{\zeta'} &-i\mu p - p\p_x\\
 i\mu p+p\p_x+\p_xp& |D+\mu|
\end{bmatrix}
\eq
is a  symmetric operator.  In particular, the adjoint of $\mathcal{L}_{\mu, \eps}$ is given by
\bq\label{L*}
\mathcal{L}_{\mu, \eps}^*=J\mathcal{L}_{\mu, \eps}J:(H^1(\T))^2\to (L^2(\T))^2.
\eq
Moreover, since 
\bq\label{spec:negmu}
\text{spec}_{L^2(\T)}(\mathcal{L}_{\mu, \eps})=\overline{\text{spec}_{L^2(\T)}(\mathcal{L}_{-\mu, \eps}}),
\eq
we lose no generality by considering  $\mathcal{L}_{\mu, \eps}$ for $\mu\in [0, \mez)$. 
 Furthermore, in system \eqref{sys:Stokes},  the change of variables 
 \bq\label{rescale:g}
 (\psi, c, P)\to (\sqrt{g} \psi, \sqrt{g}c, {P}/{g})
 \eq
 shows that we lose no generality by setting the gravity acceleration $g=1$.   The eigenvalues for the general case are obtained by multiplying the eigenvalues for the $g=1$ case  by $\sqrt{g}$.
 
 We end this section with the expansions in $\eps$ for the variable coefficients that appear in  $\mathcal{L}_{\mu, \eps}$.
  \begin{lemm}\label{lemm:expandpq}
We have the expansions
\begin{align}\label{zeta1}
&\zeta(x)=x+\eps \sin x+\eps^2 \sin(2x)+O_\eps(\eps^3),\\
 \label{expand:p}
&p(x)=1 -2\eps\cos x+\eps^2\Big(\tdm-2\cos(2x)\Big)+O_\eps(\eps^3),\\ \label{expand:q}
&q(x)=-\eps\cos x+\eps^2\big(1-\cos(2x)\big)+O_\eps(\eps^3),\\ \label{expand:qzeta}
&\frac{1+q(x)}{\zeta'}=1-2\eps \cos x+2\eps^2\big(1-\cos(2x)\big)+O_\eps(\eps^3).
\end{align}
\end{lemm}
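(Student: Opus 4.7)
The plan is to derive all four expansions by successive substitution, starting from \eqref{expand:star} and the Riemann representation \eqref{z1}.

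First, I would expand $\zeta(x) = z_1(x,0)$. Setting $y=0$ in \eqref{z1}, the kernel $-\frac{i}{2\pi}\sum_{k\ne 0}e^{ikx}\sign(k)$ is exactly the periodic Hilbert transform $\mathcal{H}$, so $\zeta$ satisfies the fixed-point relation $\zeta(x) = x + \mathcal{H}(\eta^*\circ\zeta)(x)$. At leading order $\zeta(x)\approx x$, so $\mathcal{H}(\eta^*\circ\zeta) = \mathcal{H}\eta^* + O(\eps^2) = \eps\sin x + O(\eps^2)$. Substituting back and Taylor-expanding $\eta^*(\zeta(x))$ once more produces the correction $-\eps^2\sin^2 x = -\mez\eps^2(1-\cos(2x))$; combined with the $\mez\eps^2\cos(2x)$ already in $\eta^*$, and using that $\mathcal{H}$ kills constants, this gives \eqref{zeta1}.

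Next, I would exploit the Stokes relation $G(\eta^*)\psi^* = -c^*\p_x\eta^*$ from \eqref{sys:Stokes} to rewrite
\[
B^* = \frac{(\p_x\psi^* - c^*)\p_x\eta^*}{1+|\p_x\eta^*|^2}, \qquad V^* = \frac{\p_x\psi^* + c^* |\p_x\eta^*|^2}{1+|\p_x\eta^*|^2},
\]
bypassing any direct computation of $G(\eta^*)$. With $g=1$, the expansions \eqref{expand:star} give $V^* = \eps\cos x + \mez\eps^2\cos(2x) + \mez\eps^2 + O(\eps^3)$ and $B^* = \eps\sin x + \mez\eps^2\sin(2x) + O(\eps^3)$. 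Composing with $\zeta$ by Taylor expansion, the first-order shift $\eps\sin x$ in $\zeta-x$ contributes a second-order correction: in $V^*\circ\zeta$ the new term $-\eps^2\sin^2 x$ exactly cancels the constant $\mez\eps^2$, leaving $\zeta_\sharp V^* = \eps\cos x + \eps^2\cos(2x) + O(\eps^3)$, while in $B^*\circ\zeta$ the shift $\eps^2\sin x\cos x = \mez\eps^2\sin(2x)$ doubles the $\sin(2x)$ coefficient, giving $\zeta_\sharp B^* = \eps\sin x + \eps^2\sin(2x) + O(\eps^3)$.

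Finally, from $\zeta'(x) = 1+\eps\cos x + 2\eps^2\cos(2x) + O(\eps^3)$ one inverts the power series to get $1/\zeta'(x) = 1-\eps\cos x + \mez\eps^2 - \tdm\eps^2\cos(2x) + O(\eps^3)$; substituting into the definitions $p = (c^*-\zeta_\sharp V^*)/\zeta'$ and $q = -p\,\p_x(\zeta_\sharp B^*)$ from \eqref{def:pq}, and using the identities $\cos^2 x = \mez(1+\cos(2x))$ and $\sin^2 x = \mez(1-\cos(2x))$ to collapse products into Fourier form, yields \eqref{expand:p}, \eqref{expand:q}, and then \eqref{expand:qzeta}. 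The only real obstacle is careful bookkeeping: at order $\eps^2$, contributions come simultaneously from the intrinsic $\eps^2$ terms in \eqref{expand:star}, the $\eps$-shift in $\zeta$ affecting every composition, and the cross-products from inverting $\zeta'$ and $1+|\p_x\eta^*|^2$. The constants $\tdm$ and $2$ appearing in the final formulas arise from delicate combinations of these three effects, which is where arithmetic errors are easiest to make.
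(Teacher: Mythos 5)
Your proof is correct, and one step of it is genuinely different from (and more elementary than) the paper's. In Appendix~\ref{Appendix:expandpq}, the paper expands $G(\eta^*)\psi^*$ to second order via the shape-derivative formula
\[
G(\eta^*)\psi^*=|D|\psi^*-|D|(B^*\eta^*)-\p_x(V^*\eta^*)+O_\eps(\eps^3),
\]
and then substitutes this into the definitions \eqref{BV} of $B^*$ and $V^*$, iterating because $B^*$, $V^*$ appear on both sides. You instead observe that the first Stokes equation $F_1(\eta^*,\psi^*,c^*)=c^*\p_x\eta^*+G(\eta^*)\psi^*=0$ eliminates $G(\eta^*)\psi^*$ exactly, yielding the closed forms
\[
B^*=\frac{(\p_x\psi^*-c^*)\p_x\eta^*}{1+|\p_x\eta^*|^2},\qquad V^*=\frac{\p_x\psi^*+c^*|\p_x\eta^*|^2}{1+|\p_x\eta^*|^2},
\]
which reduce the job to bare power-series algebra in $\eps$ with no Dirichlet--Neumann machinery at all. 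I checked that these reproduce $B^*=\eps S+\mez\eps^2 S_2+O_\eps(\eps^3)$ and $V^*=\eps C+\mez\eps^2(1+C_2)+O_\eps(\eps^3)$, in agreement with the paper's \eqref{B:app} and \eqref{V:app}. Your fixed-point iteration for $\zeta$ (using that $-i\sign(D)\cos(kx)=\sin(kx)$ and that the Hilbert transform annihilates the constant produced by $-\sin^2 x$) is equivalent to the paper's matched-order expansion $\zeta=x+\eps\zeta^1+\eps^2\zeta^2+O_\eps(\eps^3)$, and the remainder of the derivation---composing with $\zeta$, inverting $\zeta'$, and assembling $p$, $q$, $(1+q)/\zeta'$---is the same as the paper's. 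Your shortcut for $B^*$ and $V^*$ is cleaner here and, since the Stokes relation is exact, would also extend painlessly to higher orders.
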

 The notation $O_\eps$ indicates that the bound depends only on $\eps$. The proof of Lemma \ref{lemm:expandpq} makes use of the shape-derivative \eqref{shapederi} together with the expansion \eqref{z1} for the Riemann stretch, and is given in Appendix \ref{Appendix:expandpq}.

 \section{The operator $\mathcal{L}_{0, \eps}$}\label{section:Leps}
By virtue of Lemma \ref{lemm:expandpq}, in case $\eps=0$ the eigenvalue problem \eqref{eq:u12} reduces to
 \[
 \ld U=\mathcal{L}_{\mu, 0}U=
 \begin{bmatrix}
i\mu+\p_x  & |D+\mu|\\
-1& i\mu+\p_x
\end{bmatrix}
 U.
 \]
On the Fourier side, $\widehat U(k)\ne 0$ if and only if 
\[
\ld =  i[(\mu+k)\pm \sqrt{|\mu+k|}]  =:  i\omega^\pm_{k, \mu}.
\] 
Thus the spectrum is $\sigma(\mathcal{L}_{\mu, 0})=\{i\omega^\pm_{k, \mu}:  k\in \Zz\}\subset i\Rr$.   
Note that $\sigma(\mathcal{L}_{\mu, 0})$ is separated into the two parts 
$\sigma'(\mathcal{L}_{\mu, 0})\cup \sigma''(\mathcal{L}_{\mu, 0})$ where 
\[
\sigma'(\mathcal{L}_{\mu, 0})=\{i\om^+_{0, \mu}, i\om^-_{0, \mu}, i\om^-_{1, \mu}, i\om^+_{-1, \mu}\},   
\quad \sigma''(\mathcal{L}_{\mu, 0})=\{i\om^-_{-1, \mu}, i\om^+_{1, \mu}\}\cup\{ i\om_{k, \mu}^\pm: |k|\ge 2\}
\]
and each eigenvalue in $\sigma'(\mathcal{L}_{\mu, 0})$ is simple.   
 In case $\mu=\eps=0$,
\[
\omega^+_{0, 0}=\omega^-_{0, 0}=\omega^+_{-1, 0}=\omega^-_{1, 0}=0,
\]
so that the zero eigenvalue of $\mathcal{L}_{0, 0}$ has algebraic  multiplicity four 
and $\sigma''(\mathcal L_{0,0})$ is separated from zero.

Now we study the case when $\eps\ne 0$ is sufficiently small and $\mu=0$.  
By the semicontinuity of the separated parts of a spectrum (see IV-$\S$3.4 in \cite{Kato}) with respect to $\eps$, 
once again we have the separation 
\bq\label{sigma:L0}
\sigma(\mathcal{L}_{0, \eps})=\sigma'(\mathcal{L}_{0, \eps})\cup \sigma''(\mathcal{L}_{0, \eps}), 
\eq
 where the spectral subspace associated to the finite part $\sigma'(\mathcal{L}_{0, \eps})$ has dimension four. 
 We next prove  that zero is the only eigenvalue in $\sigma'(\mathcal{L}_{0, \eps})$ 
 by constructing  four {\it explicit} independent  eigenvectors in the generalized nullspace. 
\begin{theo}\label{kernel:L}
For  any sufficiently small $\eps$, zero is an eigenvalue of $\mathcal{L}_{0, \eps}$ with algebraic multiplicity four 
and geometric multiplicity two.  Moreover,  
\bq
\begin{aligned}
U_1&=(0, 1)^T \quad \text{and}\\
 \wt U_2&=\Big(\zeta_*\p_x\eta^*, \zeta_\sharp(\p_x\psi^*-B^*\p_x\eta^*)\Big)^T
 \end{aligned}
 \eq
are eigenvectors in the kernel, and 
\bq\label{def:U3U4}
\begin{aligned}
U_3&=\Big(\zeta_*\p_a\eta\vert_{(a, P)=(\eps, 0)}, \zeta_\sharp\big(\p_a\psi-B^*\p_a\eta\big)\vert_{(a, P)=(\eps, 0)}\Big)^T\\ 
U_4&=\Big(\zeta_*\p_P\eta\vert_{(a, P)=(\eps, 0)}, \zeta_\sharp\big(\p_P\psi-B^*\p_P\eta\big)\vert_{(a, P)=(\eps, 0)}\Big)^T
\end{aligned}
\eq
are generalized eigenvectors satisfying 
\bq\label{LU3U4}
\mathcal{L}_{0, \eps}U_3= -\p_a c\vert_{(a, P)=(\eps, 0)}\wt U_2,\quad \mathcal{L}_{0, \eps}U_4=-\p_P c\vert_{(a, P)=(\eps, 0)}\wt U_2-U_1.
\eq
 In \eqref{def:U3U4} and \eqref{LU3U4}, $(\eta, \psi, c)$ is any Stokes wave given by \eqref{expand:Stokes}. We also define the normalized second eigenvector 
\bq\label{def:U2}
U_2:=\frac{1}{\eps}\wt U_2 - \left(\frac{1}{2\pi\eps} \int_0^{2\pi} \wt U_2^{(2)}dx\right)U_1,   
\eq
where we write components $\wt U_2=(\wt U_2^{(1)}, \wt U_2^{(2)})^T$.  
Then $U_2$ is an eigenvector with mean zero.
\end{theo}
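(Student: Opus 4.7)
The plan is to construct four explicit linearly independent generalized eigenvectors of $\mathcal{L}_{0,\eps}$ at the zero eigenvalue by exploiting three natural invariances of the steady system \eqref{sys:Stokes}: the gauge freedom $\psi\mapsto\psi+\mathrm{const}$, horizontal translation, and the two-parameter smooth family of Stokes waves $(\eta(\cdot;a,P),\psi(\cdot;a,P),c(a,P))$ from Theorem \ref{theo:Stokes}. Differentiating each invariance produces a pair $(\bar\eta,\bar\psi)$ whose image under the physical-domain linearization $D_{(\eta,\psi)}(F_1,F_2)$ at $(\eta^*,\psi^*,c^*,0)$ is explicit (zero or a single known term), and I transport the pair to the flat-domain unknowns in two steps: first to the good unknowns $(v_1,v_2)=(\bar\eta,\bar\psi-B^*\bar\eta)$ via identity \eqref{identity}, and then to $(w_1,w_2)=(\zeta_*v_1,\zeta_\sharp v_2)$ via the Riemann conjugation \eqref{identity:zeta}, which converts the spatial linearization into $\mathcal{L}_{0,\eps}$.

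For $U_1=(0,1)^T$ I substitute directly into \eqref{eq:u12} at $\mu=0$: both rows vanish since $|D|\,1=0$ and $p\,\p_x 1=0$. For $\wt U_2$ I apply $\p_x$ to the identities $F_j(\eta^*,\psi^*,c^*)=0$, $j=1,2$, obtaining $D_{(\eta,\psi)}F_j(\p_x\eta^*,\p_x\psi^*)=0$; the two-step transport through \eqref{identity} and \eqref{identity:zeta} then yields $\mathcal{L}_{0,\eps}\wt U_2=0$. For $U_3$ and $U_4$ I differentiate $F_j(\eta(\cdot;a,P),\psi(\cdot;a,P),c(a,P),P)=0$ with respect to $a$ and $P$ at $(\eps,0)$. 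Using $\tfrac{\delta F_1}{\delta c}=\p_x\eta$, $\tfrac{\delta F_2}{\delta c}=\p_x\psi$, $\tfrac{\delta F_2}{\delta P}=1$, $\tfrac{\delta F_1}{\delta P}=0$, I obtain
\[
D_{(\eta,\psi)}(F_1,F_2)(\p_a\eta,\p_a\psi)\big|_{(\eps,0)}=-(\p_ac)(\p_x\eta^*,\p_x\psi^*),
\]
\[
D_{(\eta,\psi)}(F_1,F_2)(\p_P\eta,\p_P\psi)\big|_{(\eps,0)}=-(\p_Pc)(\p_x\eta^*,\p_x\psi^*)-(0,1),
\]
which after good-unknown substitution (the $(0,1)$ source is preserved because $B^*\cdot 0=0$) and the Riemann conjugation (with $\zeta_\sharp 1=1$) yield the two identities \eqref{LU3U4}.

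For the multiplicity count I invoke the semicontinuity of the separated spectral part $\sigma'(\mathcal{L}_{0,\eps})$ in \eqref{sigma:L0} (IV--\S 3.4 of \cite{Kato}): the total algebraic multiplicity inside $\sigma'$ remains $4$ for small $\eps$. The four generalized eigenvectors $U_1,\wt U_2,U_3,U_4$ are linearly independent for small $\eps\ne 0$ by inspection of leading $\eps$-orders using \eqref{expand:Stokes}, Lemma \ref{lemm:expandpq}, and $\p_P\eta=1/g$. Hence the algebraic multiplicity of $0$ equals $4$ and $0$ is the only eigenvalue of $\sigma'(\mathcal{L}_{0,\eps})$. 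For the geometric multiplicity, $U_1$ and $\wt U_2$ are two manifestly independent kernel vectors; conversely, any kernel vector $\sum\alpha_j U_j$ in the generalized null space must satisfy $-(\alpha_3\p_ac+\alpha_4\p_Pc)\wt U_2-\alpha_4 U_1=0$, and the independence of $U_1,\wt U_2$ together with $\p_ac|_{(\eps,0)}=\sqrt g\,\eps+O(\eps^2)\ne 0$ forces $\alpha_3=\alpha_4=0$. Finally, $U_2$ in \eqref{def:U2} lies in the kernel because $\wt U_2$ and $U_1$ do, and the $U_1$-subtraction normalizes its second component to have zero mean.

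The main obstacle is the careful bookkeeping across the two successive changes of unknowns. One must verify in particular that the constant source $1$ in the linearized $\psi$-equation (from $\tfrac{\delta F_2}{\delta P}=1$) survives the good-unknown substitution unchanged and then the Riemann conjugation as precisely $U_1=(0,1)^T$, without producing spurious $B^*$- or $\zeta'$-contributions; this follows by applying \eqref{identity} and \eqref{identity:zeta} source-term by source-term.
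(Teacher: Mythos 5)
Your construction of $U_1,\wt U_2,U_3,U_4$ by differentiating the steady system in $x$, $a$, and $P$ and transporting through the good-unknown substitution and the Riemann conjugation is exactly the paper's argument, and the multiplicity count you supply (4 independent generalized eigenvectors filling the 4-dimensional separated spectral subspace, geometric multiplicity 2 because $\p_a c\neq0$ forces $\alpha_3=\alpha_4=0$) correctly fleshes out what the paper states more tersely. The only small omission is in the mean-zero claim for $U_2$: your $U_1$-subtraction only normalizes the second component, and you should also note that the first component $\tfrac1\eps\wt U_2^{(1)}=\tfrac1\eps\zeta_*\p_x\eta^*$ is odd (since both $\zeta$ and $\p_x\eta^*$ are odd) and hence already has zero mean — which is the point the paper makes.
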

\begin{proof}
We have defined 
\bq\label{L0,eps}
\mathcal{L}_{0, \eps}=
\begin{bmatrix}
p\p_x +\p_x p & |D|\\
-\frac{g+q}{\zeta'} &p\p_x
\end{bmatrix},\quad g=1.
\eq
Firstly, it is clear that $U_1:=(0, 1)^T\in \text{ker}(\mathcal{L}_{0, \eps})$.   
Secondly, we differentiate \eqref{sys:Stokes} with respect to $x$ and  then evaluate at $(a, P)=(\eps, P^*=0)$ to obtain 
\[
\frac{\delta F_1(\eta^*, \psi^*, c^*)}{\delta(\eta, \psi)}(\p_x\eta^*, \p_x\psi^*)=0,  
\quad \frac{\delta F_2(\eta^*, \psi^*, c^*, P^*)}{\delta(\eta, \psi)}(\p_x\eta^*, \p_x\psi^*)=0,
\] 
where $(\eta^*, \psi^*, c^*)$ is given by \eqref{expand:star}. The identities  \eqref{dF1}, \eqref{dF2} and \eqref{identity} with $\ol\eta=\p_x\eta^*, \ol\psi=\p_x\psi^*$  then give
\[
\begin{aligned}
&\p_x\big((c^*-V^*)\p_x\eta^*\big)+G(\eta^*)\big(\p_x\psi^*-B^*\p_x\eta^*\big)=0,\\
&(c^*-V^*)\p_x^2\psi^* - B^*\p[(c^*-V^*)\p_x\eta^*] - B^*\p_xV^*\p_x\eta^* - g\eta^* = 0,
\end{aligned}
\]
so that 
\[
-\big(g+({V^*}-c^*)\p_x{B^*} \big)\p_x\eta^*+(c^*-{V^*})\p_x(\p_x\psi^*-B^*\p_x\eta^*)=0.
\]
Using \eqref{identity:zeta} with $f_1=\p_x\eta^*$ and $f_2=\p_x\psi^*-B^*\p_x\eta^*$, we deduce that 
\[   
\wt U_2:=\Big(\zeta_*\p_x\eta^*, \zeta_\sharp(\p_x\psi^*-B^*\p_x\eta^*)\Big)^T\in \text{ker}(\mathcal{L}_{0, \eps}).
\]
		Thirdly, we differentiate \eqref{sys:Stokes} with respect to $a$ 
		and then evaluate at $(a, P)=(\eps, 0)$  to obtain 
\begin{align*}
&\frac{\delta F_1(\eta^*, \psi^*, c^*)}{\delta(\eta, \psi)}(\p_a\eta\vert_{(a, P)=(\eps, 0)}, \p_a\psi\vert_{(a, P)=(\eps, 0)})=-\p_a c\vert_{(a, P)=(\eps, 0)}\p_x\eta^*,\\
& \frac{\delta F_2(\eta^*, \psi^*, c^*, P^*)}{\delta(\eta, \psi)}(\p_a\eta\vert_{(a, P)=(\eps, 0)}, \p_a\psi\vert_{(a, P)=(\eps, 0)})=-\p_a c\vert_{(a, P)=(\eps, 0)}\p_x\psi^*.
\end{align*}   
		Using \eqref{dF1}, \eqref{dF2} and \eqref{identity} with $(\ol\eta,\ol\psi) = (\p_a\eta^*, \p_a\psi^*)|_{(a,P)=(\eps,0)} $      
as well as \eqref{identity:zeta} with $(f_1,f_2) = (\p_a\eta^*, \p_a\psi^*-B^*\p_a\eta^*)$, we find that
\[
U_3:=\Big(\zeta_*\p_a\eta\vert_{(a, P)=(\eps, 0)}, \zeta_\sharp\big(\p_a\psi-B^*\p_a\eta\big)\vert_{(a, P)=(\eps, 0)}\Big)^T
\]
satisfies $\mathcal{L}_{0, \eps}U_3=-\p_a c\vert_{(a, P)=(\eps, 0)}\wt U_2$.    
Fourthly, 
differentiating \eqref{sys:Stokes} in $P$ and then evaluating at $(a, P)=(\eps, 0)$  yields 
\begin{align*}
&\frac{\delta F_1(\eta^*, \psi^*, c^*)}{\delta(\eta, \psi)}(\p_P\eta\vert_{(a, P)=(\eps, 0)}, \p_P\psi\vert_{(a, P)=(\eps, 0)})=-\p_P c\vert_{(a, P)=(\eps, 0)}\p_x\eta^*,\\
& \frac{\delta F_2(\eta^*, \psi^*, c^*, P^*)}{\delta(\eta, \psi)}(\p_P\eta\vert_{(a, P)=(\eps, 0)}, \p_P\psi\vert_{(a, P)=(\eps, 0)})=-\p_P c\vert_{(a, P)=(\eps, 0)}\p_x\psi^*-1,
\end{align*}
and hence
\[
U_4:=\Big(\zeta_*\p_P\eta\vert_{(a, P)=(\eps, 0)}, \zeta_\sharp\big(\p_P\psi-B^*\p_P\eta\big)\vert_{(a, P)=(\eps, 0)}\Big)^T
\]
satisfies $\mathcal{L}_{0, \eps}U_4=-\p_P c\vert_{(a, P)=(\eps, 0)}\wt U_2-U_1 = -U_1$.  
For the case of finite depth the term $\p_P c\vert_{(a, P)=(\eps, 0)}$ would not vanish  but for infinite depth it does.
Since $U_1$ and $\wt U_2$ are eigenvectors,   $U_3$ and $U_4$ are generalized eigenvectors.  Therefore we have  \eqref{LU3U4}.  
 Finally, note that $U_2$ has mean zero because $\wt U_2^{(1)}$ is an odd function due to the fact that both $\zeta$  
and $\p_x\eta^*$ are odd. 
\end{proof}
\begin{rema}  
The preceding proof works for both the finite and infinite depth cases. For the infinite depth case, we have the identity $G(B^*)\eta^*=-\p_xV^*$. See Remark 2.13 in \cite{ABZ1}.  It then follows directly from  \eqref{dF1}-\eqref{dF2} that  
\[
\frac{\delta F_1(\eta^*, \psi^*, c^*)}{\delta(\eta, \psi)}(\frac{1}{g}, 0)=0,\quad \frac{\delta F_2(\eta^*, \psi^*, c^*, Q=0)}{\delta(\eta, \psi)}(\frac{1}{g}, 0)=-1.
\]
 Consequently,  $U_4=\big(\zeta_*\frac{1}{g}, \zeta_\sharp(-\frac{1}{g}B^*)\big)^T=\big(\frac{1}{g}\zeta', -\frac{1}{g}\zeta_\sharp B^*\big)^T$ satisfies $\mathcal{L}_{0, \eps}U_4=-U_1$. This provides an alternative method to obtain $U_4$.
  \end{rema}
 For notational simplicity, we shall adopt the following abbreviations.
 \begin{nota}
 \begin{align*}
  & C=\cos x,\quad S=\sin x,\quad C_k=\cos(kx),\quad S_k=\sin(kx)\quad\text{for }k\in \{2, 3, 4,...\}.
 \end{align*}
\end{nota}
   \begin{coro}\label{coro:expandUj}
The components of $U_j$ defined  in Theorem \ref{kernel:L} have the following parity  and expansions.  
 \begin{align}\label{expand:U2}
 &U_2=
 \begin{bmatrix}
 \mathrm{odd}\\  \mathrm{even}
 \end{bmatrix}=
\begin{bmatrix}-S\\  C
\end{bmatrix} 
+\eps\begin{bmatrix}
-2S_2\\
C_2
\end{bmatrix}
+O_\eps(\eps^2),\\ \label{expand:U3}
&  U_3=
 \begin{bmatrix}
 \mathrm{ even}\\  \mathrm{odd}
 \end{bmatrix}=\begin{bmatrix} C\\ S
\end{bmatrix}
+\eps
\begin{bmatrix}
2C_2\\ S_2
\end{bmatrix}
+O_\eps(\eps^2),\\\label{expand:U4}
  & U_4=
 \begin{bmatrix}
  \mathrm{even}\\  \mathrm{odd}
 \end{bmatrix}=\begin{bmatrix}1\\ 0 \end{bmatrix}+ 
\eps
\begin{bmatrix}
C\\
-S
\end{bmatrix}
+O_\eps(\eps^2).
 \end{align}
 \end{coro}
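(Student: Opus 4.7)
The proof is a bookkeeping exercise built on three ingredients: the Stokes expansion \eqref{expand:star} together with its $\p_a$ and $\p_P$ derivatives, the expansion of the Riemann stretch $\zeta$ in Lemma \ref{lemm:expandpq}, and an order--$\eps^2$ expansion of $B^*=B(\eta^*,\psi^*)$. The plan is first to read off the parity of each $U_j$ from the parity of its defining ingredients, and then to compose and multiply the relevant Taylor expansions to the required accuracy.

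For the parity, since $\eta^*$ is even and $\psi^*$ is odd, differentiation in $x$ flips parity while differentiation in the parameters $a,P$ preserves it; thus $\p_x\eta^*$ is odd, $\p_x\psi^*$ even, $\p_a\eta,\p_P\eta$ even, $\p_a\psi,\p_P\psi$ odd. By Proposition~3.1, $B^*$ is odd and $V^*$ is even, so the combinations $\p_x\psi^*-B^*\p_x\eta^*$ and $\p_a\psi-B^*\p_a\eta$ are even, while $\p_P\psi-B^*\p_P\eta$ is odd. Since $\zeta$ is odd (Lemma \ref{lemm:expandpq}), $\zeta'$ is even, and composition with $\zeta$ preserves parity; then $\zeta_\sharp$ preserves parity and $\zeta_*$ preserves parity as well. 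Applying these rules to \eqref{def:U3U4} and \eqref{def:U2} yields the stated parity pattern for $U_2,U_3,U_4$.

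For the expansions I would proceed in three steps. \textbf{Step 1} (Stokes derivatives at $g=1$): from \eqref{expand:star} with $(a,P)=(\eps,0)$ read off $\eta^*=\eps C+\tfrac12\eps^2 C_2+O(\eps^3)$, $\psi^*=\eps S+\tfrac12\eps^2 S_2+O(\eps^3)$, $\p_x\eta^*=-\eps S-\eps^2 S_2+O(\eps^3)$, $\p_x\psi^*=\eps C+\eps^2 C_2+O(\eps^3)$, $\p_a\eta|_{(\eps,0)}=C+\eps C_2+O(\eps^2)$, $\p_a\psi|_{(\eps,0)}=S+\eps S_2+O(\eps^2)$, and $\p_P\eta=1,\ \p_P\psi=0$. \textbf{Step 2} (expansion of $B^*$): use the defining formula \eqref{BV} together with the standard Taylor expansion $G(\eta)f=|D|f-|D|(\eta|D|f)-\p_x(\eta\p_x f)+O(\eta^2 f)$; combined with $|D|\sin(kx)=|k|\sin(kx)$ and the product-to-sum identities $2SC=S_2$, $2C^2=1+C_2$, $2S^2=1-C_2$, this gives $B^*=\eps S+\tfrac12\eps^2 S_2+O(\eps^3)$. \textbf{Step 3} (composition with $\zeta$): write $\zeta(x)=x+\eps S+O(\eps^2)$, $\zeta'(x)=1+\eps C+O(\eps^2)$, and use $f\circ\zeta=f+\eps S\, f'+O(\eps^2)$ for $f=O(1)$, while $f\circ\zeta=f+O(\eps^2)$ for $f=O(\eps)$. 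Multiplying out the products $\zeta'\cdot(\cdot\circ\zeta)$ for the first components and composing $(\cdot\circ\zeta)$ for the second components and collecting like harmonics produces precisely the right-hand sides of \eqref{expand:U2}, \eqref{expand:U3}, \eqref{expand:U4}; for \eqref{expand:U2} one additionally subtracts the mean of $\wt U_2^{(2)}$, which is $O(\eps^3)$ at this order and so does not affect the leading two terms.

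The only step that requires genuine care is the expansion of $B^*$, since one must correctly combine the $\eps^2$ contribution from $|D|(\eta^*|D|\psi^*)+\p_x(\eta^*\p_x\psi^*)$ with the term $\p_x\psi^*\,\p_x\eta^*/(1+|\p_x\eta^*|^2)$; the remaining calculations are algebraic manipulations with trigonometric identities. All other pieces are either direct from Proposition \ref{steadyexpansion} and Lemma \ref{lemm:expandpq} or come from keeping one extra Taylor term in the composition with $\zeta$ to capture the order--$\eps$ correction that feeds back into the leading $O(1)$ harmonics of $U_3$ and $U_4$.
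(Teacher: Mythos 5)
Your proposal is correct and follows essentially the same route as the paper: parity is read off directly from the evenness/oddness of $\eta^*,\psi^*,\zeta,B^*,V^*$ and the fact that $\zeta_\sharp$ and $\zeta_*$ preserve parity, while the expansions are obtained by substituting the Stokes derivatives and $B^*=\eps S+O_\eps(\eps^2)$ into $\zeta_\sharp,\zeta_*$ and expanding the composition with $\zeta(x)=x+\eps\sin x+O_\eps(\eps^2)$ to one extra Taylor order. The paper packages the composition step as the explicit formulas \eqref{zeta:op1}--\eqref{zeta:op4} and defers the $B^*$ expansion to Appendix C (eq.\ \eqref{B:app}), but the computation is the same; also note that only $B^*=\eps S+O_\eps(\eps^2)$ is actually needed here, so your extra $\tfrac12\eps^2 S_2$ term, while correct, is unnecessary for the stated accuracy.
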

 \begin{proof}
 From Theorem \ref{theo:Stokes} and Proposition \ref{prop:z12}, 
 it is clear that $\eta$ is even while both $\psi$ and $\zeta$ are odd.  
 It follows that $p$ and $q$, defined by \eqref{def:pq}, are even.  
 Consequently, the parity  properties stated in \eqref{expand:U2}, \eqref{expand:U3} and \eqref{expand:U4} follow.   
 Next we expand $U_j$ in powers of $\eps$. From \eqref{zeta1} and Taylor's formula, 
 for any function of the form $f=f^0+\eps f^1+O_\eps(\eps^2)$, we have 
\begin{align}\label{zeta:op1}
&\zeta_\sharp f(x)=f^0(x)+\eps\big(S\p_xf^0(x)+f^1(x)\big)+O_\eps(\eps^2),\\ \label{zeta:op2}
& \zeta_* f(x)=f^0(x)+\eps\big(Cf^0(x)+S\p_xf^0(x)+f^1(x)\big)+O_\eps(\eps^2).
\end{align}
On the other hand, if $f=\eps f^1+\eps^2 f^2+O_\eps(\eps^3)$, then
\begin{align}\label{zeta:op3}
&\zeta_\sharp f(x)=\eps f^1(x)+\eps^2\big(S\p_xf^1(x)+f^2(x)\big)+O_\eps(\eps^3),\\ \label{zeta:op4}
& \zeta_* f(x)=\eps f^1(x)+\eps^2\big(Cf^1(x)+S\p_xf^1(x)+f^2(x)\big)+O_\eps(\eps^3).
\end{align}
Using \eqref{expand:star} and \eqref{zeta:op1}-\eqref{zeta:op4}, and $B^*=\eps S+O_\eps(\eps^2)$ (see \eqref{B:app}) we find the expansion for $U_j$  as follows.
\begin{align*}
&\zeta_*\p_x\eta^*
=-\eps S-2\eps^2S_2+O_\eps(\eps^3)\\
&\zeta_\sharp(\p_x\psi^*-B^*\p_x\eta^*)=\eps  C+ \eps^2 C_2+O_\eps(\eps^3),\\
&\zeta_*\p_a\eta\vert_{(a, P)=(\eps, 0)}=C+\eps 2C_2+O_\eps(\eps^2)\\
& \zeta_\sharp\big(\p_a\psi-B^*\p_a\eta\big)\vert_{(a, P)=(\eps, 0)}=S+\eps S_2+O_\eps(\eps^2),\\
&\zeta_*\p_P\eta\vert_{(a, P)=(\eps, 0)}=1+\eps C+O_\eps(\eps^2),\\
& \zeta_\sharp\big(\p_P\psi-B^*\p_P\eta\big)\vert_{(a, P)=(\eps, 0)}=-\eps S+O_\eps(\eps^2).
\end{align*}
Note in particular that 
\bq\label{expand:wtU2}
\wt U_2=
 \begin{bmatrix}
 \mathrm{odd}\\  \mathrm{even}
 \end{bmatrix}=
\eps\begin{bmatrix}-S\\  C
\end{bmatrix} 
+\eps^2\begin{bmatrix}
-2S_2\\
C_2
\end{bmatrix}+O_\eps(\eps^3),
\eq
so that  $\int_0^{2\pi}\wt U^{(2)}_2dx=O_\eps(\eps^3)$ and the expansion for $U_2$ follows from \eqref{def:U2}. 
 \end{proof}
 Let $\mathcal{U}$ be the linear subspace of $(L^2(\T))^2$ spanned by the $(C^\infty(\T))^2$ vectors 
 $U_1\dots,U_4$ in Theorem \ref{kernel:L}.  
 Denote by  $\Pi$  the orthogonal  projection from $(L^2(\T))^2$ onto  the orthogonal complement 
 $\mathcal{U}^\perp$ of $\mathcal U$ in $(L^2(\T))^2$.
 The remainder of this section is devoted to the following  theorem in which the kernel and range of $\Pi \cL_{0, \eps}$ 
 are explicitly determined.  
 Recall that a linear operator is Fredholm if it is closed, has closed range of finite codimension, and 
 has a kernel of finite dimension. 
 \begin{theo}\label{theo:rangeL}
 For any sufficiently small $\eps$, $\Pi \cL_{0, \eps}:(H^1(\T))^2\to (L^2(\T))^2$ is a Fredholm operator with kernel $\cU$ and range $\cU^\perp$. 
  \end{theo}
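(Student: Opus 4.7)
The plan is first to show that $\cL_{0,\eps}:(H^1(\T))^2\to (L^2(\T))^2$ is itself Fredholm of index zero, then transfer this to $\Pi\cL_{0,\eps}$, which differs from it by a finite-rank operator. Using Lemma \ref{lemm:expandpq}, the leading symbol of $\cL_{0,\eps}$ at frequency $k$ is
\[
\sigma_1(x,k)=\begin{bmatrix}2ip(x)k & |k|\\ 0 & ip(x)k\end{bmatrix},
\]
with determinant $-2p(x)^2k^2\neq 0$ for all $k\neq 0$ since $p=1+O(\eps)>0$. Thus $\cL_{0,\eps}$ is a first-order elliptic matrix pseudodifferential operator on $\T$ and hence Fredholm. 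The Hamiltonian identity \eqref{L*} gives $\ker\cL_{0,\eps}^*=J(\ker\cL_{0,\eps})$, so its kernel and cokernel have the same dimension and the index is zero. Since $I-\Pi$ has rank equal to $\dim\cU=4$, the operator $\Pi\cL_{0,\eps}=\cL_{0,\eps}-(I-\Pi)\cL_{0,\eps}$ remains Fredholm of index zero.

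From Theorem \ref{kernel:L}, $\cL_{0,\eps}U_1=\cL_{0,\eps}U_2=0$, $\cL_{0,\eps}U_3=-(\p_ac)\wt U_2$ and $\cL_{0,\eps}U_4=-U_1$ (using the infinite-depth identity $\p_Pc=0$), so $\cL_{0,\eps}(\cU)\subset\mathrm{span}(U_1,\wt U_2)=\mathrm{span}(U_1,U_2)\subset\cU$, which already gives $\cU\subset\ker(\Pi\cL_{0,\eps})$. For the reverse inclusion, suppose $\cL_{0,\eps}U=f\in\cU$. The Fredholm alternative for $\cL_{0,\eps}$ requires $f\perp\ker\cL_{0,\eps}^*=\mathrm{span}(JU_1,JU_2)$. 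Using Corollary \ref{coro:expandUj}, a direct calculation of the $2\times 4$ pairing matrix $M_{k,j}=(U_j,JU_k)$ shows that its only nonzero entries at leading order in $\eps$ are $M_{1,4}=M_{2,3}=2\pi$; the $2\times 2$ block on its last two columns is therefore invertible for sufficiently small $\eps$, so $\mathrm{rank}\,M=2$ and $\cU\cap\mathrm{span}(JU_1,JU_2)^\perp$ is two-dimensional. But this subspace already contains $\cL_{0,\eps}(\cU)=\mathrm{span}(U_1,U_2)$, which is also two-dimensional (nondegenerate since $\p_ac\neq 0$ for small $\eps\neq 0$ by Proposition \ref{steadyexpansion}); by dimension count the two agree. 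Hence there is $V\in\cU$ with $\cL_{0,\eps}V=f$, giving $U-V\in\ker\cL_{0,\eps}\subset\cU$ and therefore $U\in\cU$.

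Once $\ker(\Pi\cL_{0,\eps})=\cU$ has dimension $4$, the index-zero Fredholm property forces the range to have codimension $4$ in $(L^2(\T))^2$; since the range is automatically contained in $\cU^\perp$, which also has codimension $4$, the two coincide. The step I expect to require the most care is the chain of dimension matches in the reverse inclusion: the rank of the pairing matrix $M$ (which controls the codimension of $\cU\cap\mathrm{span}(JU_1,JU_2)^\perp$ inside $\cU$) must match the dimension of $\cL_{0,\eps}(\cU)$ exactly, and both must equal $2$. This is where the explicit Jordan block structure $\cL_{0,\eps}U_3\propto\wt U_2$, $\cL_{0,\eps}U_4=-U_1$ from Theorem \ref{kernel:L}, along with the nonvanishing of $\p_ac$ and the infinite-depth vanishing of $\p_Pc$, plays the decisive role.
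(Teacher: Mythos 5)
Your proof is correct and takes a genuinely different route from the paper's. You establish the Fredholm property directly from ellipticity of $\cL_{0,\eps}$ as a first-order matrix pseudodifferential operator on $\T$, get index zero from the Hamiltonian symmetry $\cL_{0,\eps}^*=J\cL_{0,\eps}J$, transfer to $\Pi\cL_{0,\eps}$ by finite-rank perturbation, and then identify the kernel by the Fredholm alternative together with a dimension count using the pairing matrix $(U_j, JU_k)$. The paper instead identifies the kernel by the slicker algebraic chain: $\Pi\cL_{0,\eps}V=0\iff\cL_{0,\eps}V\in\cU=\ker(\cL_{0,\eps}^2)\iff V\in\ker(\cL_{0,\eps}^3)=\cU$, and it gets closedness of the range not from abstract ellipticity but by invoking Fredholm stability (Kato IV-\S5.2) to reduce to $\eps=0$, where it solves the equations $\p_xv_1+|D|v_2=f_1$, $-v_1+\p_xv_2=f_2$ explicitly on the Fourier side. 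Each approach buys something: yours is shorter and more conceptual, while the paper's explicit solution at $\eps=0$ is concrete and self-contained, requiring no pseudodifferential machinery (which is not entirely trivial here since $|D|$ has a non-smooth symbol at the origin, a point you pass over).

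Two small caveats. First, your principal symbol has a spurious factor of $2$ in the $(1,1)$ entry — the operator $\p_x(p\,\cdot)=p\p_x+(\p_x p)$ has leading symbol $ipk$, not $2ipk$ — but the determinant $-p^2k^2$ is still nonzero, so ellipticity and the conclusion are unaffected. Second, your dimension count ($\mathrm{rank}\,M=2$, $\dim\cL_{0,\eps}(\cU)=2$, $\dim\ker\cL_{0,\eps}=2$) holds only for $\eps\ne 0$; at $\eps=0$ one has $\p_ac\vert_{a=0}=0$, so $\cL_{0,0}U_3=0$ and the geometric multiplicity of the zero eigenvalue jumps to $3$, making $M$ a $3\times 4$ matrix of rank $3$ and $\cL_{0,0}(\cU)$ one-dimensional. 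Your method still closes with those adjusted numbers, but as written your counts quietly assume $\eps\ne 0$; the paper's proof covers $\eps=0$ explicitly (and indeed needs it as the base point for the Fredholm stability step). Since the downstream use of the theorem in the Lyapunov–Schmidt analysis is for fixed $\eps\ne 0$, this is a cosmetic rather than a substantive gap.
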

   \begin{proof} 
  Since $\Pi \cL_{0, \eps}:(H^1(\T))^2\to (L^2(\T))^2$ is bounded, it is closed. We deduce from  \eqref{LU3U4} that
   \bq\label{KerL3}
   \mathcal{U}=\mathrm{Ker}(\mathcal{L}^2_{0, \eps})=\mathrm{Ker}(\mathcal{L}^m_{0, \eps})\quad\forall m\ge 3.
   \eq  
Thus  $\Pi \mathcal{L}_{0, \eps}V=0$  if and only if  $\mathcal{L}_{0, \eps}V\in \mathrm{Ker}(\mathcal{L}_{0, \eps}^2)$, or equivalently $V\in \mathrm{Ker}(\mathcal{L}_{0, \eps}^3)=\mathcal{U}$. In other words, $\mathrm{Ker}(\cL_{0, \eps})=\cU$. It remains to prove that $\Pi \cL_{0, \eps}$ maps onto $\cU^\perp$. 
This follows from the following two lemmas.
 \end{proof}
 The first lemma is a weaker statement. 
 \begin{lemm}
 We have
 \bq\label{key:onto}
  \ol{\mathrm{Ran}(\Pi \cL_{0, \eps})} = \cU^\perp, 
 \eq
 where  $\Pi \cL_{0, \eps}: (H^1(\T))^2\to \cU^\perp$.
 \end{lemm}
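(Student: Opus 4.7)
The approach is to establish density of the range by duality. Since $\Pi\mathcal L_{0,\eps}$ already takes values in $\mathcal U^\perp$, it suffices to show that the only $W\in\mathcal U^\perp$ satisfying $(W,\Pi\mathcal L_{0,\eps}V)=0$ for every $V\in(H^1(\T))^2$ is $W=0$. Because $\Pi$ is the $L^2$-orthogonal projection onto $\mathcal U^\perp$ and $\Pi W=W$, this condition simplifies to $(W,\mathcal L_{0,\eps}V)=0$ for every such $V$, i.e.\ to the distributional equation $\mathcal L_{0,\eps}^*W=0$. Invoking the Hamiltonian identity \eqref{L*}, $\mathcal L_{0,\eps}^*=J\mathcal L_{0,\eps}J$, and multiplying on the left by $J^{-1}=-J$ converts this into $\mathcal L_{0,\eps}(JW)=0$ in $\mathscr D'(\T)$.

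The next step is to upgrade $JW$ from an $L^2$ distribution to a classical kernel element. Writing $JW=(u_1,u_2)^T$, the second row of $\mathcal L_{0,\eps}(JW)=0$ reads $\p_xu_2=\frac{g+q}{p\,\zeta'}u_1$, and Lemma \ref{lemm:expandpq} guarantees that the coefficient is smooth and bounded away from $0$ for small $\eps$, so $u_1\in L^2$ forces $u_2\in H^1$. Feeding this into the first row $|D|u_2+(p\p_x+\p_xp)u_1=0$ yields $\p_xu_1\in L^2$, hence $u_1\in H^1$, and a standard bootstrap promotes $JW$ to $(C^\infty(\T))^2$. Theorem \ref{kernel:L} then gives $JW=\alpha U_1+\beta U_2$ for some scalars $\alpha,\beta$, and thus $W=-\alpha JU_1-\beta JU_2$.

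To close, I impose $W\in\mathcal U^\perp$ by pairing with $U_3$ and $U_4$, obtaining the $2\times 2$ system
\[
\alpha(JU_1,U_k)+\beta(JU_2,U_k)=0,\qquad k\in\{3,4\}.
\]
Using $JU_1=(1,0)^T$ and the leading-order expansions from Corollary \ref{coro:expandUj}, namely $JU_2=(\cos x,\sin x)^T+O_\eps(\eps)$, $U_3=(\cos x,\sin x)^T+O_\eps(\eps)$, and $U_4=(1,0)^T+O_\eps(\eps)$, the coefficient matrix equals
\[
\begin{bmatrix}0 & 2\pi\\ 2\pi & 0\end{bmatrix}+O_\eps(\eps),
\]
whose determinant is nonzero for sufficiently small $\eps$. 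Therefore $\alpha=\beta=0$, and $W=0$.

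The main obstacle is the regularity step: $W$ is a priori only $L^2$, whereas Theorem \ref{kernel:L} identifies $\mathrm{Ker}(\mathcal L_{0,\eps})$ on $(H^1(\T))^2$. The triangular near-elliptic structure of $\mathcal L_{0,\eps}$ (with $|D|$ in the $(1,2)$ slot and a nondegenerate transport $p\p_x$ in the $(2,2)$ slot) makes the bootstrap routine, but it hinges on $p$, $\zeta'$, and $g+q$ remaining bounded away from $0$ for $|\eps|\ll 1$, a fact furnished by Lemma \ref{lemm:expandpq}.
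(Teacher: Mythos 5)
Your proof is correct and follows essentially the same route as the paper: express orthogonality to the range via the adjoint, use the Hamiltonian identity $\cL_{0,\eps}^*=J\cL_{0,\eps}J$ to identify the obstruction with $\mathrm{span}\{JU_1,JU_2\}$, then show that orthogonality to $U_3,U_4$ forces the coefficients to vanish because the $2\times 2$ matrix $[(JU_j,U_k)]_{j\in\{1,2\},\,k\in\{3,4\}}$ has determinant $-4\pi^2+O_\eps(\eps)$. The one place you go further than the paper is the explicit elliptic bootstrap promoting $JW$ from $L^2$ to $H^1$ (and hence $C^\infty$) before invoking Theorem \ref{kernel:L}; the paper takes the identification $\mathrm{Ker}(\cL_{0,\eps}^*)=\mathrm{span}\{JU_1,JU_2\}$ for granted, which quietly presupposes that the $L^2$ distributional kernel coincides with the $H^1$ kernel. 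Your bootstrap fills this in. A small imprecision: for the bootstrap you only need $p$ and $\zeta'$ bounded away from zero (to solve the second row for $\p_x u_2$); you do not need $g+q$ bounded away from zero, only smooth -- though with $g=1$ and $q=O_\eps(\eps)$ it happens to be so.
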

 \begin{proof}
 Since $\mathrm{Ran}(\Pi)=\cU^\perp$ is  a closed subspace,  by duality the identity \eqref{key:onto} is equivalent to 
 $ \mathrm{Ker}(\Pi)=\mathrm{Ker}(\cL_{0, \eps}^*\Pi)$, 
 where $\mathrm{Ker}(\Pi)=\cU\subset (H^\infty(\T))^2$. 
It is trivial that $\mathrm{Ker}(\Pi)\subset \mathrm{Ker}(\cL_{0, \eps}^*\Pi)$.  
Conversely suppose  $V\in  \mathrm{Ker}(\cL_{0, \eps}^*\Pi)$.  
Due to \eqref{L*} we have 
  \[
 \Pi V\in \mathrm{Ker}(\cL_{0, \eps}^*)=\mathrm{Ker}(J\cL_{0, \eps}J) 
 =\mathrm{Ker}(\cL_{0, \eps}J)=\mathrm{span}\{JU_1, J U_2\}.
 \]
 Thus $\Pi V=\beta_1 JU_1+\beta_2 JU_2$ for some $\beta_1,\ \beta_2\in \Cc$. Since $\Pi V\in \cU^\perp$,  $\beta_1 JU_1+\beta_2 JU_2$ is orthogonal to $U_3$ and $U_4$, so that  
  \bq\label{matrix:beta}
 \begin{bmatrix} 
 (JU_1, U_3) & (JU_2, U_3)\\
 (JU_1, U_4) & (JU_2, U_4)
 \end{bmatrix}
 \begin{bmatrix} \beta_1 \\ \beta_2 \end{bmatrix}=0.
 \eq
 Using the expansions for $U_j$ in \eqref{expand:Uj} we compute 
 \[
 (JU_1, U_3)=O_\eps(\eps^2),\quad (JU_1, U_4)=2\pi+ O_\eps(\eps^2),\quad (JU_2, U_3)=2\pi+O_\eps(\eps^2),\quad (JU_2, U_4)=O_\eps(\eps^2).
  \]
  Consequently, the determinant of the matrix in \eqref{matrix:beta} equals $-4\pi^2+O_\eps(\eps^2)$ which is nonzero  for all sufficiently small $\eps$. We conclude that $\beta_1=\beta_2=0$, yielding $\Pi V=0$ and hence $V\in \mathrm{Ker}(\Pi)$ as claimed.
 \end{proof}

 \begin{lemm}
$\mathrm{Ran}(\Pi \cL_{0, \eps})=\cU^\perp$.
\end{lemm}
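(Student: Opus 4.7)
The plan is to combine the density of $\mathrm{Ran}(\Pi\cL_{0,\eps})$ in $\cU^\perp$ (from the preceding lemma) with a Fredholm-theoretic dimension count. The key claim is that $\cL_{0,\eps}:(H^1(\T))^2\to (L^2(\T))^2$ is Fredholm of index zero for small $\eps$; the lemma then follows from elementary linear algebra.

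At $\eps=0$ one has the constant-coefficient operator $\cL_{0,0}=\left(\begin{smallmatrix}\p_x & |D|\\ -1 & \p_x\end{smallmatrix}\right)$, a Fourier multiplier with matrix symbol $M_k=\left(\begin{smallmatrix}ik&|k|\\ -1& ik\end{smallmatrix}\right)$ of determinant $|k|-k^2$. Thus $M_k$ is invertible for $|k|\geq 2$ with $\|M_k^{-1}\|\lesssim |k|^{-1}$ (so its inversion gains one derivative), while at each of $k\in\{-1,0,1\}$ the matrix $M_k$ has rank one, with one-dimensional kernel and one-dimensional cokernel. Hence $\cL_{0,0}:(H^1(\T))^2\to(L^2(\T))^2$ is Fredholm of index zero. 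By Lemma \ref{lemm:expandpq}, the coefficients in $\cL_{0,\eps}-\cL_{0,0}$, namely $p-1$, $p'$, and $(1+q)/\zeta'-1$, are all $O(\eps)$ in $C^\infty(\T)$, so
\[
\|\cL_{0,\eps}-\cL_{0,0}\|_{(H^1(\T))^2\to (L^2(\T))^2}=O(\eps).
\]
Stability of the Fredholm property and of the index under small-norm perturbations then gives that $\cL_{0,\eps}$ is Fredholm of index zero for $\eps$ small. Since Theorem \ref{kernel:L} yields $\dim\ker(\cL_{0,\eps})=2$, the range $\mathrm{Ran}(\cL_{0,\eps})$ is closed of codimension two in $(L^2(\T))^2$.

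I would then close the proof with a dimension count. From \eqref{LU3U4}, $\cL_{0,\eps}U_1=\cL_{0,\eps}U_2=0$, $\cL_{0,\eps}U_3=-\p_a c|_{(\eps,0)}\wt U_2$ with $\p_a c=\eps+O(\eps^2)\ne 0$ by \eqref{expand:Stokes}, and $\cL_{0,\eps}U_4=-U_1$ (since $\p_P c=0$ in infinite depth). Together with \eqref{def:U2} this gives $\cL_{0,\eps}(\cU)=\mathrm{span}\{U_1,U_2\}=\ker(\cL_{0,\eps})$, so $\ker(\cL_{0,\eps})\subset \mathrm{Ran}(\cL_{0,\eps})\cap\cU$. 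Conversely, if $V=\cL_{0,\eps}W\in\cU=\ker(\cL_{0,\eps}^2)$ then $\cL_{0,\eps}^3 W=0$, hence $W\in\ker(\cL_{0,\eps}^3)=\cU$ by \eqref{KerL3}, so $V=\cL_{0,\eps}W\in\cL_{0,\eps}(\cU)=\ker(\cL_{0,\eps})$. Therefore $\cU\cap\mathrm{Ran}(\cL_{0,\eps})=\ker(\cL_{0,\eps})$ has dimension two. Since $\dim(\cU/(\cU\cap\mathrm{Ran}(\cL_{0,\eps})))=4-2=2=\mathrm{codim}\,\mathrm{Ran}(\cL_{0,\eps})$, the natural injection $\cU/(\cU\cap\mathrm{Ran}(\cL_{0,\eps}))\hookrightarrow (L^2(\T))^2/\mathrm{Ran}(\cL_{0,\eps})$ between two-dimensional spaces is an isomorphism; equivalently $\cU+\mathrm{Ran}(\cL_{0,\eps})=(L^2(\T))^2$. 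Applying $\Pi$, which annihilates $\cU$ and has image $\cU^\perp$, yields
\[
\mathrm{Ran}(\Pi\cL_{0,\eps})=\Pi(\mathrm{Ran}(\cL_{0,\eps}))=\Pi(\cU+\mathrm{Ran}(\cL_{0,\eps}))=\Pi((L^2(\T))^2)=\cU^\perp.
\]

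The main delicate point is the Fredholm stability step: the perturbation $\cL_{0,\eps}-\cL_{0,0}$ is not compact from $(H^1(\T))^2$ to $(L^2(\T))^2$ because the first-order term $(p-1)\p_x$ is of the same order as the principal part, so compact-perturbation stability does not apply. Instead one relies on the openness of the Fredholm set in the operator-norm topology, which is available precisely because $p-1$, $p'$, and $(1+q)/\zeta'-1$ are all uniformly of size $O(\eps)$.
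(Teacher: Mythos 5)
Your proof is correct for $\eps\neq 0$ and takes a genuinely different route from the paper's. The paper's strategy is to apply the Fredholm-stability theorem (Kato, IV.\S5.2) directly to $\Pi\cL_{0,\eps}$, reducing the question to the base case $\eps=0$, which they then settle by an explicit Fourier-side computation of the solution $V$ to $\Pi\cL_{0,0}V=F$. You instead apply Fredholm stability to $\cL_{0,\eps}$ itself (without the projection), obtaining that it is Fredholm of index zero, and then carry out a purely algebraic dimension count on $\cU$ using the structure $\cL_{0,\eps}(\cU)=\ker(\cL_{0,\eps})$ and $\cU=\ker(\cL_{0,\eps}^2)=\ker(\cL_{0,\eps}^3)$. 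This is cleaner in the sense that it avoids solving the system \eqref{onto:eq1}--\eqref{onto:eq2} by hand; the paper's computation buys, in exchange, an explicit formula for $(\Pi\cL_{0,0})^{-1}$ on $\cU^\perp$, which is not needed later but is transparent. Your observation that compact-perturbation stability is unavailable (because $(p-1)\p_x$ is of principal order) and that one must instead use openness of the Fredholm set in operator norm is exactly right, and is the same theorem the paper invokes.

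Two small remarks. First, the density fact \eqref{key:onto} that you announce at the outset is never actually used in your argument: once you know $\cU+\mathrm{Ran}(\cL_{0,\eps})=(L^2(\T))^2$, applying $\Pi$ gives $\mathrm{Ran}(\Pi\cL_{0,\eps})=\cU^\perp$ outright. You may simply drop that sentence. Second, and more substantively, your dimension count uses both $\dim\ker(\cL_{0,\eps})=2$ and $\cL_{0,\eps}(\cU)=\mathrm{span}\{U_1,U_2\}$, both of which rely on $\p_a c\vert_{(\eps,0)}=\eps+O(\eps^2)\neq 0$; at $\eps=0$ one has $\ker(\cL_{0,0})=\mathrm{span}\{U_1,U_2,U_3\}$ of dimension three and $\cL_{0,0}(\cU)=\mathrm{span}\{U_1\}$, so the numbers in your bookkeeping are wrong there (even though the final conclusion $\cU+\mathrm{Ran}(\cL_{0,0})=(L^2(\T))^2$ is still true, as one sees from $\mathrm{Ran}(\cL_{0,0})=\ker(\cL_{0,0}^*)^\perp=\mathrm{span}\{U_2,U_3,U_4\}^\perp$ together with mutual orthogonality of the $U_j$). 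Since the downstream use of this lemma in the Lyapunov--Schmidt reduction is for a fixed nonzero $\eps$, this is a cosmetic issue, but you should either restrict the argument explicitly to $\eps\neq 0$ or patch the $\eps=0$ count separately.
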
   
\begin{proof}
By virtue of \eqref{key:onto}, we only have to prove that 
$\mathrm{Ran}(\Pi \cL_{0, \eps})$ is closed in $(L^2(\T))^2$.   
It would be tempting to prove that $\Pi \cL_{0, \eps}$ is coercive. However, this is not the case as can be easily checked 
when $\eps=0$. Instead we appeal to a perturbative argument.   
According to Theorem 5.17, IV-$\S$5.2 in \cite{Kato}, the Fredholm property is stable under small perturbations.  
Therefore, it suffices to prove this property for $\eps=0$; that is, 
 the range of $\Pi \cL_{0, 0}$  equals $\cU^\perp$.   So now consider $\eps=0$. 
Given $F=(f_1, f_2)^T\in \cU^\perp$ we only have to prove that 
\bq\label{onto:L00}
F=\Pi \cL_{0, 0}V\quad\text{ for some } V\in (H^1(\T))^2.
\eq
Because $\eps=0$, the $U_j$ are precisely 
\bq
U_1=(0, 1)^T,\quad U_2=(-S, C)^T,\quad U_3=(C, S)^T,\quad U_4=(1, 0)^T. 
\eq
The $U_j$ are mutually orthogonal in $(L^2(\T))^2$, which implies that 
\bq\label{Pi0}
\Pi G=G-\sum_{j=1}^4\frac{(G, U_j)}{(U_j, U_j)}U_j\qquad\forall G\in (L^2(\T))^2.
\eq
Now for any $V=(v_1, v_2)^T\in (H^1(\T))^2$, we have
\[
\cL_{0, 0}V=\begin{bmatrix}\p_xv_1+|D|v_2\\ -v_1+\p_xv_2 \end{bmatrix}.
\]
We use \eqref{Pi0} to compute $\Pi \cL_{0, 0}V$.  
\begin{align*}
&(\cL_{0, 0}V, U_1)=\int_\T ( -v_1+\p_xv_2)dx=-\int_\T v_1dx,\\
&(\cL_{0, 0}V, U_4)=\int_\T (\p_xv_1+|D|v_2) dx=0,
\end{align*}
\begin{align*}
&(\cL_{0, 0}V, U_2)=\int_\T \{(\p_xv_1+|D|v_2)(-S)+(-v_1+\p_xv_2)C \}dx\\
&\qquad=\int_\T \{(v_1C-v_2S)+(-v_1C+v_2S)\}dx=0,\\
&(\cL_{0, 0}V, U_3)=\int_\T \{(\p_xv_1+|D|v_2)C+(-v_1+\p_xv_2)S\}dx\\
&\qquad=\int_\T \{(v_1S+v_2C)+(-v_1S-v_2C)\}dx=0.
\end{align*}
We obtain
\[
\Pi \cL_{0, 0}V= \cL_{0, 0}V+\Big(\frac{1}{2\pi}\int_\T v_1dx\Big) U_1=\begin{bmatrix}\p_xv_1+|D|v_2\\ -v_1+\frac{1}{2\pi}\int_\T v_1dx+\p_xv_2 \end{bmatrix},
\]
and hence \eqref{onto:L00} is equivalent to the system
\begin{align}\label{onto:eq1}
&\p_xv_1+|D|v_2=f_1,\\\label{onto:eq2}
 &-v_1+\frac{1}{2\pi}\int_\T v_1dx+\p_xv_2=f_2.
\end{align}
where we write  $F=(f_1, f_2)\in \cU^\perp$. 
It suffices to prove the existence of a solution $(v_1, v_2)^T\in (H^1(\T))^2$ of this system. 
From the orthogonality condition $(F, U_1)=0$ we have $\int_\T f_2dx=0$, 
and hence both sides of \eqref{onto:eq2} have mean zero.  
Thus upon differentiating \eqref{onto:eq2} we obtain the equivalent equation  
\bq\label{onto:eq3}
 -\p_xv_1+\p_x^2v_2=\p_xf_2.
 \eq
 Adding \eqref{onto:eq1} to \eqref{onto:eq3} yields an equation for $v_2$ alone, namely 
 \bq  \label{onto:eq4}
 \p_x^2v_2+|D|v_2=f_1+\p_xf_2. 
 \eq
 On the Fourier side this becomes 
\bq\label{onto:eq5}
 (-k^2+|k|)\widehat{v_2}(k)=\widehat{f_1}(k)+ik\widehat{f_2}(k)\quad\forall k\in \Zz.
 \eq
 Since $-k^2+|k|=0$ for $k\in \{-1, 0, 1\}$,  \eqref{onto:eq5} is solvable if and only if the following conditions hold
 \begin{align}\label{onto:cd1}
& \widehat{f_1}(0)=0,\\\label{onto:cd2}
 &\widehat{f_1}(1)+i\widehat{f_2}(1)=0,\\ \label{onto:cd3}
 &\widehat{f_1}(-1)-i\widehat{f_2}(-1)=0.
 \end{align}
 Condition \eqref{onto:cd1} is satisfied since $0=(F, U_4)=\int_\T f_2dx=\widehat{f_2}(0)$.  On the other hand,  the conditions $(F, U_2)=(F, U_3)=0$  can be written as 
 \begin{align*}
&-i[\widehat{f_1}(1)-\widehat{f_1}(-1)]+[\widehat{f_2}(1)+\widehat{f_2}(-1)]=0,\\
&[\widehat{f_1}(1)+\widehat{f_1}(-1)]+i[\widehat{f_2}(1)-\widehat{f_2}(-1)]=0.
 \end{align*}
Thus we  obtain both \eqref{onto:cd2} and \eqref{onto:cd3}. 
 We conclude that  the general periodic solution $v_2$ of \eqref{onto:eq4}  is 
 \bq\label{onto:v2}
 v_2(x)=b_0+b_{-1}e^{-ix}+b_1e^{ix}+\frac{1}{2\pi}\sum_{k\in \Zz\setminus\{-1, 0, 1\}} e^{ikx}\frac{\widehat{f_1}(k)+ik\widehat{f_2}(k)}{-k^2+|k|}.
 \eq
 Clearly $v_2\in H^1(\T)$.  Then, returning to \eqref{onto:eq1} and using the fact that $f_1$ has mean zero, we obtain 
  \bq\label{onto:v1}
 v_1(x)=a_0-\sign(D)v_2+\int_0^x f_1(x')dx'.
  \eq
  It is easy to deduce from \eqref{onto:v2} and \eqref{onto:v1} that $V\in (H^1(\T))^2$ if $ F\in (L^2(\T))^2$.  
  In fact, projecting $V$ onto $\cU^\perp$ fixes the constants $a_0,\ b_0,\ b_{-1}$ and $b_1$,  
 thereby yielding the unique solution  $\Pi V$  of \eqref{onto:L00} in $\cU^\perp$.
\end{proof}
  
 \section{Expansions of ${\bf A}_{\mu, \eps}$, ${\bf I}_\eps$ and $\det(\bf A_{\mu, \eps}-\ld {\bf I}_\eps)$}
 We define the matrices formed by $U_j$ and $\cL_{\mu, \eps}$, namely 
 \bq\label{def:AI}
 {\bf A_{\mu, \eps}}=\Big(\frac{(\mathcal{L}_{\mu, \eps}U_j, U_k)}{(U_k, U_k)}\Big)_{j, k=\overline{1, 4}},\quad {\bf I_{\eps}}=\Big(\frac{(U_j, U_k)}{(U_k, U_k)}\Big)_{j, k=\overline{1, 4}}.
 \eq
 Here and in what follows, we always consider $\mu\in [0, \mez)$.  
 \subsection
 {Expansions of ${\bf A_{\mu, \eps}}$ and ${\bf I}_\eps$}
  
In the following discussion, Fourier multipliers that act on $2\pi$-periodic functions are computed using the identities 
\bq
\begin{aligned}
&f(D)\cos (kx)=
\begin{cases}
if(k)\sin (kx)\quad\text{if $f$ is odd},\\
f(k)\cos (kx)\quad\text{if $f$ is even}
\end{cases},\\
&f(D)\sin (kx)=
\begin{cases}
-if(k)\cos (kx)\quad\text{if $f$ is odd},\\
f(k)\sin (kx)\quad\text{if $f$ is even}
\end{cases}.
\end{aligned}
\eq
We recall from Theorem \ref{kernel:L} and Corollary \ref{coro:expandUj} that the vectors $U_j$ are expanded as 
\bq\label{expand:Uj}
\begin{aligned}
&U_1=\begin{bmatrix} 0\\ 1\end{bmatrix},\quad U_2=
\begin{bmatrix}-S\\  C
\end{bmatrix} 
+\eps\begin{bmatrix}
-2S_2\\
C_2
\end{bmatrix}
+O_\eps(\eps^2),\\
& U_3=\begin{bmatrix} C\\ S
\end{bmatrix}
+\eps
\begin{bmatrix}
2C_2\\ S_2
\end{bmatrix}
+O_\eps(\eps^2),\quad U_4=\begin{bmatrix}1\\ 0 \end{bmatrix}+ 
\eps
\begin{bmatrix}
C\\
-S
\end{bmatrix}
+O_\eps(\eps^2).
\end{aligned}
\eq 
 In view of the identity $|k+\mu|=|k|+\mu \sign(k)$ for $|k|\ge 1$ and $\mu\in [0, \mez)$, we have 
  \[
  |D+\mu|u = \big(|D|+\mu\sign(D)\big)\big(u-\widehat{u}(0)\big)+\frac{1}{2\pi}\mu\widehat{u}(0) 
  =\big(|D|+\mu \sign(D)\big)u+\mu \frac{1}{2\pi}\int_\T udx.
  \]
Consequently, $\mathcal{L}_{\mu, \eps}$ can be decomposed as
\bq\label{decompose:L}
\mathcal{L}_{\mu, \eps}=\mathcal{L}_{0, \eps}+\mu (L_\eps^1 +L^\sharp),
\eq
where
\begin{align} \label{multtrig}
L_\eps^1=\begin{bmatrix}
ip& \mathrm{sign}(D)\\
0 &ip
\end{bmatrix}\quad\text{and}\quad L^\sharp\begin{bmatrix} u_1\\u_2\end{bmatrix}= \begin{bmatrix}  \frac{1}{2\pi}\int_\T u_2dx\\ 0\end{bmatrix}
\end{align}
are bounded on any Sobolev space $H^s(\T)$.  In the case of finite depth, there would also  be a term with $\mu^2$.  
Let us successively expand $\mathcal{L}_{\mu, \eps}U_j$ using the decomposition \eqref{decompose:L} 
together with  the expansion of $p$ from Lemma \ref{lemm:expandpq}.

i) $\mathcal{L}_{\mu, \eps}U_1$. We have $\mathcal{L}_{0, \eps}U_1=0$, $L^\sharp U_1=\begin{bmatrix} 1\\0 \end{bmatrix}$ and
\bq\label{L1U1}
\begin{aligned}
L^1_\eps U_1&= \begin{bmatrix}
0 \\ i
\end{bmatrix}
+
i\eps \begin{bmatrix}
0\\-2C
\end{bmatrix}
+ O_\eps(\eps^2).
\end{aligned}
\eq
(ii) $\mathcal{L}_{\mu, \eps}U_2$.  We have $\mathcal{L}_{0, \eps} U_2=0$, $L^\sharp U_2=0$ (because $U_2$ has mean zero) and
\bq\label{L1U2}
\begin{aligned}
L^1_\eps U_2&=
\begin{bmatrix}
-ipS+\sign(D)C\\
ipC
\end{bmatrix}
+\eps\begin{bmatrix}
-2ipS_2+\sign(D)C_2\\
ipC_2
\end{bmatrix}
+ O_\eps(\eps^2)\\
&=i\begin{bmatrix}
0\\C
\end{bmatrix}
+i\eps 
\begin{bmatrix}
0\\-1
\end{bmatrix}
+ O_\eps(\eps^2).
\end{aligned}
\eq
(iii) $\mathcal{L}_{\mu, \eps}U_3$. Since $\p_ac\vert_{(a, P)=(\eps, 0)}=\eps$, combining  \eqref{LU3U4}, \eqref{def:U2} and \eqref{expand:wtU2}  yields
\bq\label{L0U3:expand}
\mathcal{L}_{0, \eps} U_3=-\eps \wt U_2=-\eps\big(\eps U_2+O_\eps(\eps^3)U_1\big)=-\eps^2 U_2+O_\eps(\eps^4)U_1.
\eq
Noticing that the second components of $U_3$ and $U_4$ are odd, we have 
\bq\label{LsharpU3U4}
L^\sharp U_3=L^\sharp U_4=0.
\eq
On the other hand,
\bq
\begin{aligned}
L^1_\eps U_3
&=\begin{bmatrix}
ipC+\sign(D)S\\
ipS
\end{bmatrix}
+\eps \begin{bmatrix}
2ipC_2+\sign(D)S_2\\
ipS_2
\end{bmatrix}
+O_\eps(\eps^2)\\
&=i\begin{bmatrix}
0\\S
\end{bmatrix}
+i\eps \begin{bmatrix}
-1\\ 0
\end{bmatrix} +O_\eps(\eps^2).
 \end{aligned}
\eq
(iv) $\mathcal{L}_{\mu, \eps}U_4$. The fact that $\p_Pc\equiv 0$ combined with \eqref{LU3U4} yields
\[
\mathcal{L}_{0, \eps}  U_4=-\p_Pc\vert_{(a, P)=(\eps, 0)}\wt U_2-U_1=\begin{bmatrix} 0\\ -1\end{bmatrix}.
\]
 Taking \eqref{expand:p} into account, we compute 
\bq\label{L1U4}
\begin{aligned}
L^1_\eps U_4&=
\begin{bmatrix}
ip\\ 0
\end{bmatrix}+
\eps\begin{bmatrix}
ipC-\sign(D)S\\ -ipS
\end{bmatrix}
+O_\eps(\eps^2)=i\begin{bmatrix}
1\\ 0
\end{bmatrix}
+i\eps \begin{bmatrix}0\\-S
\end{bmatrix}+ O_\eps(\eps^2).
\end{aligned}
\eq 
 Now consider the various inner products.  Some of them vanish because of parity.
Since $U_1$ and $U_2$ are  
$\begin{bmatrix}
 \mathrm{odd}\\  \mathrm{even}
 \end{bmatrix}$
  and $p$ is even, we see that $L^1_\eps U_1$ and $L^1_\eps U_2$  are 
$  \begin{bmatrix}\mathrm{odd}\\\mathrm{even} \end{bmatrix}$. 
But $U_3$ and $U_4$ are 
 $\begin{bmatrix}
 \mathrm{even}\\  \mathrm{odd}
 \end{bmatrix},$
 so that  we find 
\[
(L^1_\eps U_1, U_3)=(L^1_\eps U_1, U_4)=(L^1_\eps U_2, U_3)=(L^1_\eps U_2, U_4)=0.
\]
We also recall that $L^\sharp U_1=\begin{bmatrix} 1\\ 0 \end{bmatrix}$ and $L^\sharp U_2=0$. 
 Therefore, denoting 
\bq
 {\bf M}_{jk}=(\mathcal{L}_{\mu, \eps}U_j, U_k)
 \eq
  we have
 \bq\label{M23,24}
 {\bf M}_{23}= {\bf M}_{24}=0
 \eq
 and 
\bq\label{M:row12}
\begin{aligned}
&{\bf M}_{11}=i\mu2\pi+\mu O_\eps(\eps^2),\quad {\bf M}_{12}=i\mu\eps (-2\pi)+\mu O_\eps(\eps^2),\\
& {\bf M}_{13}=\mu O_\eps(\eps^2),\quad  {\bf M}_{14}=\mu 2\pi+\mu O_\eps(\eps^2),\\
& {\bf M}_{21}=i\mu\eps(-2\pi)+\mu O_\eps(\eps^2),\quad {\bf M}_{22}=i\mu\pi+\mu O_\eps(\eps^2).
\end{aligned}
\eq

On the other hand, $\mathcal{L}_{0, \eps}U_3,~\mathcal{L}_{0, \eps}U_4=\begin{bmatrix} 
 \mathrm{odd}\\ \mathrm{even}
\end{bmatrix}$ and $L^1_\eps U_3,~L^1_\eps U_4=\begin{bmatrix} 
 \mathrm{even}\\ \mathrm{odd}
\end{bmatrix}$, yielding the fact that many  more inner products vanish: 
\bq
\begin{aligned}
&(\mathcal{L}_{0, \eps} U_3, U_3)=(\mathcal{L}_{0, \eps} U_3, U_4)=(\mathcal{L}_{0, \eps} U_4, U_3)=(\mathcal{L}_{0, \eps} U_4, U_4)=0,\\
&(L^1_\eps U_3, U_1)=(L^1_\eps U_3, U_2)=(L^1_\eps U_4, U_1)=(L^1_\eps U_4, U_2)=0.
\end{aligned}
\eq
We recall in addition that $L^\sharp U_3=L^\sharp U_4=0$,  $\mathcal{L}_{0, \eps}U_4=-U_1$ and $\mathcal{L}_{0, \eps}U_3=-\eps^2 U_2+O_\eps(\eps^4)U_1$ (see \eqref{L0U3:expand}). 
Consequently  
\bq\label{M31,42}
\begin{aligned}
&{\bf M}_{31}=(\mathcal{L}_{0, \eps}U_3, U_1)=-\eps^2(U_2, U_1)+O_\eps(\eps^4)(U_1, U_1)=O_\eps(\eps^4),\\
&{\bf M}_{41}=(\mathcal{L}_{0, \eps}U_4, U_1)=-(U_1, U_1)=-2\pi,\\
&{\bf M}_{42}=(\mathcal{L}_{0, \eps}U_4, U_2)=-(U_1, U_2)=0
\end{aligned}
\eq
due to $(U_1, U_2)=\int_0^{2\pi} U_2^{(2)}dx=0$. Moreover, 
\bq\label{M:row34}
\begin{aligned}
&{\bf M}_{32}=O_\eps(\eps^2),\quad {\bf M}_{33}=i\mu \pi +\mu O_\eps(\eps^2),\quad {\bf M}_{34}=i\mu\eps(-3\pi)+\mu O_\eps(\eps^2),\\
& {\bf M}_{43}=i\mu\eps (-\pi)+\mu O_\eps(\eps^2),\quad {\bf M}_{44}=i\mu 2\pi+\mu O_\eps(\eps^2).
\end{aligned}
\eq
This completes the expansion of the matrix {\bf M}.  For the case of finite depth, the algebra is considerably more complicated. Now by virtue of Corollary \ref{coro:expandUj} and the fact that $U_2$ has mean zero, we also have
\bq\label{UU}
\begin{aligned}
&(U_1, U_1)=2\pi,\quad (U_1, U_2)=0,\quad (U_1, U_3)=0,\quad (U_1, U_4)=0,\\
&(U_2, U_2)=2\pi+O_\eps(\eps^2),\quad (U_2, U_3)=0,\quad (U_2, U_4)=0,\\
&(U_3, U_3)=2\pi+O_\eps(\eps^2),\quad (U_3, U_4)=O_\eps(\eps^2),\quad (U_4, U_4)=2\pi+O_\eps(\eps^2).
\end{aligned}
\eq
Therefore, ${\bf I_{\eps}}=\big(\frac{(U_j, U_k)}{(U_k, U_k)}\big)_{j, k=\overline{1, 4}}$ is very simply expanded as
{\footnotesize
 \bq\label{matrixI}
{\bf I}_\eps=
\begin{bmatrix}
1 & 0 & 0 & 0\\
0 & 1 & 0 & 0\\
0 & 0 & 1 & O_\eps(\eps^2)\\
0 & 0 & O_\eps(\eps^2) & 1
\end{bmatrix}.
\eq
}

Combining this with \eqref{M23,24}, \eqref{M:row12}, \eqref{M31,42}, \eqref{M:row34} and \eqref{UU}, we also expand $({\bf A}_{\mu, \eps})_{jk}=\frac{({\bf M}_{\mu, \eps})_{jk}}{(U_k, U_k)}$
as
\bq\label{matrixA}
\begin{aligned}
&{\bf A}_{11}=i\mu +\mu O_\eps(\eps^2),\quad {\bf A}_{12}=-i\mu\eps +\mu O_\eps(\eps^2),\quad {\bf A}_{13}=\mu O_\eps(\eps^2),\quad {\bf A}_{14}=\mu +\mu O_\eps(\eps^2),\\
& {\bf A}_{21}=-i\mu\eps+\mu O_\eps(\eps^2),\quad {\bf A}_{22}=\mez i\mu+\mu O_\eps(\eps^2),\quad {\bf A}_{23}= {\bf A}_{24}=0,\\
&{\bf A}_{31}=O_\eps(\eps^4),\quad {\bf A}_{32}=O_\eps(\eps^2),\quad {\bf A}_{33}=\mez i\mu +\mu O_\eps(\eps^2),\quad {\bf A}_{34}=-\tdm i\mu\eps+\mu O_\eps(\eps^2),\\
&{\bf A}_{41}=-1,\quad {\bf A}_{42}=0,\quad {\bf A}_{43}=-\mez i\mu\eps +\mu O_\eps(\eps^2),\quad {\bf A}_{44}=i\mu +\mu O_\eps(\eps^2).
\end{aligned}
\eq
We can be more specific about ${\bf A}_{32}$.  
Indeed, because  $L^\sharp U_3=0$ and $(L^1_\eps U_3, U_2)=0$, we deduce from \eqref{L0U3:expand} that
\bq\label{A32}
{\bf A}_{32}=\frac{(\mathcal{L}_{0, \eps}U_3, U_2)}{(U_2, U_2)}=\frac{\big(-\eps^2 U_2+O_\eps(\eps^4)U_1, U_2\big)}{(U_2, U_2)}=-\eps^2.
\eq
We note that the exact coefficient of $\eps^2$ in ${\bf A}_{32}$ will be needed to determine the contribution of the main term $II_{10}$ in \eqref{expand:II} below. In \eqref{A32}, this is obtained by using the structure of the basis $\{U_j: j=\overline{1, 4}\}$ instead of expanding up to $O_\eps(\eps^3)$.

 Let us set ${\bf \wt A}_{jk}$ to be the  the {\it leading part} of the preceding expansion of ${\bf A}_{jk}$,  
 that is, without the remainder terms.  
 In particular, 
\[
{\bf \wt A}_{22}={\bf \wt A}_{33}, \quad {\bf \wt A}_{11}={\bf \wt A}_{44}  
\quad \text { and  }\quad  {\bf \wt A}_{13}={\bf \wt A}_{23}={\bf \wt A}_{24}={\bf \wt A}_{31}={\bf \wt A}_{42}=0.
\]
 Combining this with  \eqref{matrixI}, \eqref{matrixA} and \eqref{A32}, we can write the whole matrix as  
\bq\label{final:A-I}
\begin{aligned}
&{\bf A}_{\mu, \eps}-\ld{\bf I}_\eps=\\
&\begin{bmatrix}
{\bf \wt A}_{11}-\ld+\mu O_\eps (\eps^2) &{\bf \wt A}_{12}+\mu O_\eps (\eps^2)  & \mu O_\eps(\eps^2) &{\bf \wt A}_{14}+\mu O_\eps (\eps^2)\\
{\bf \wt A}_{21}+\mu O_\eps(\eps^2)&{\bf \wt A}_{22}-\ld+\mu O_\eps(\eps^2) &0 &  0\\
O_\eps(\eps^4) & -\eps^2 &{\bf \wt A}_{33}-\ld+\mu O_\eps( \eps^2)  & {\bf \wt A}_{34}+\mu O_\eps( \eps^2)+\ld O_\eps(\eps^2)\\
-1 &  0 &{\bf \wt A}_{43} +\mu O_\eps(\eps^2)+\ld O_\eps(\eps^2)&{\bf \wt A}_{44}-\ld+\mu O_\eps( \eps^2)
\end{bmatrix}.
\end{aligned}
\eq

\subsection{Expansion of $\det({\bf A}_{\mu, \eps}-{\bf I}_\eps)$}
We write out the individual terms of the determinant  of $({\bf A}_{\mu, \eps}-{\bf I}_\eps)$.  
We  observe that in \eqref{final:A-I} the only entries {\it without} $\mu$ or $\ld$ are the $(3, 1)$, $(3, 2)$ and $(4, 1)$ entries.   
So let us consider  those terms.  
Only the $(3, 2)$ and $(4, 1)$ entries are multiplied by each other in the terms 
$(3, 2)(4, 1)(j, k)(j', k')$ where $j, j'\in \{1, 2\}$ and $k, k'\in \{3, 4 \}$.   
Because the $(2, 3)$ and $(2, 4)$ entries are identically zero, the terms $(3, 2)(4, 1)(j, k)(j', k')$ vanish.   
We deduce that each term in  $\det{\bf A}_{\mu, \eps}-{\bf I}_\eps$ is at most $O(\mu^3+|\ld|^3)$.  

{\it Taking $\eps$ into account, we shall treat  $O(\mu^4+|\ld|^4)$ and $O(\eps^3)$ terms as remainders.} 
Evaluating  $\det({\bf A}_{\mu, \eps}-{\bf I}_\eps)$ with respect to the second row yields the expansion 
\bq\label{expand:detA}
\begin{aligned}
&\det({\bf A}_{\mu, \eps}-I_\eps)=\\
&-[{\bf \wt A}_{21}+\mu O_\eps(\eps^2)][{\bf \wt A}_{12}+\mu O_\eps(\eps^2)][{\bf \wt A}_{33}-\ld+\mu O_\eps(\eps^2)][{\bf \wt A}_{44}-\ld+\mu O_\eps(\eps^2)]\\
&+[{\bf \wt A}_{21}+\mu O_\eps(\eps^2)][{\bf \wt A}_{12}+\mu O_\eps(\eps^2)][{\bf \wt A}_{43}+\mu O_\eps(\eps^2)+\ld O_\eps(\eps^2)][{\bf \wt A}_{34}+\mu O_\eps(\eps^2)+\ld O_\eps(\eps^2)]\\
&+[{\bf \wt A}_{21}+\mu O_\eps(\eps^2)][\mu O_\eps(\eps^2)][-\eps^2][{\bf \wt A}_{44}-\ld+\mu O_\eps(\eps^2)]\\
&-[{\bf \wt A}_{21}+\mu O_\eps(\eps^2)][{\bf \wt A}_{14}+\mu O_\eps(\eps^2)][-\eps^2][{\bf \wt A}_{43} +\mu O_\eps(\eps^2)+\ld O_\eps(\eps^2)]\\
&+[{\bf \wt A}_{22}-\ld+\mu O_\eps(\eps^2)][{\bf \wt A}_{11}-\ld+\mu O_\eps(\eps^2)][{\bf \wt A}_{33}-\ld+\mu O_\eps(\eps^2)][{\bf \wt A}_{44}-\ld+\mu O_\eps(\eps^2)]\\
&-[{\bf \wt A}_{22}-\ld+\mu O_\eps(\eps^2)][{\bf \wt A}_{11}-\ld+\mu O_\eps(\eps^2)] [{\bf \wt A}_{34}+\mu O_\eps(\eps^2)+\ld O_\eps(\eps^2)][{\bf \wt A}_{43} +\mu O_\eps(\eps^2)+\ld O_\eps(\eps^2)]\\
&-[{\bf \wt A}_{22}-\ld+\mu O_\eps(\eps^2)][\mu O_\eps(\eps^2)][O_\eps(\eps^4)][{\bf \wt A}_{44}-\ld+\mu O_\eps(\eps^2)]\\
&+[{\bf \wt A}_{22}-\ld+\mu O_\eps(\eps^2)][\mu O_\eps(\eps^2)][-1][ {\bf \wt A}_{34}+\mu O_\eps(\eps^2)+\ld O_\eps(\eps^2)]\\
&+[{\bf \wt A}_{22}-\ld+\mu O_\eps(\eps^2)][{\bf \wt A}_{14}+\mu O_\eps(\eps^2)][O_\eps(\eps^4)][{\bf \wt A}_{43} +\mu O_\eps(\eps^2)+\ld O_\eps(\eps^2)]\\
&-[{\bf \wt A}_{22}-\ld+\mu O_\eps(\eps^2)][{\bf \wt A}_{14}+\mu O_\eps(\eps^2)][-1][{\bf \wt A}_{33}-\ld+\mu O_\eps(\eps^2)]\\
&=T_1+...+T_{10},  
\end{aligned}
\eq
respectively.   In order to simplify the subsequent exposition, we introduce the following notation for polynomials of $(\mu, \ld)$:
\bq\label{def:Pi34}
\Pi_3(\mu, \ld)=a_0\mu^3+a_1\mu^2\ld+a_2\mu\ld^2,\quad \Pi_4(\mu, \ld)=a_0\mu^4+a_1\mu^3\ld+a_2\mu^2\ld^2+a_3\mu\ld^3+a_4\ld^4,
\eq
where the $a_j$ may depend on $\eps$. 
We emphasize that $\Pi_3(\mu, \ld)$ does not have a $\ld^3$ term.  
Examining the explicit formulas for ${\bf \wt A}_{jk}$, we find that 
\[
\begin{aligned}
&T_1=O_\eps(\eps^2)\Pi_4(\mu, \ld),\quad T_2=O_\eps(\eps^4)\Pi_4(\mu, \ld),\quad T_3=O_\eps(\eps^5)\Pi_3(\mu, \ld),\\
&T_4=O_\eps(\eps^4)\Pi_3(\mu, \ld),\quad T_5=O_\eps(1)\Pi_4(\mu, \ld),\quad T_6=O_\eps(\eps^2)\Pi_4(\mu, \ld),\\
&T_7=O_\eps(\eps^6)\Pi_3(\mu, \ld),\quad T_8=O_\eps(\eps^3)\Pi_3(\mu, \ld),\quad T_9=O_\eps(\eps^5)\Pi_3(\mu, \ld),\\
&T_{10}=\mu(\mez i\mu-\ld)^2+\mu(\mez i\mu -\ld)^2O_\eps(\eps^2)+\mu^2(\mez i\mu -\ld)O_\eps(\eps^2)+O_\eps(\eps^4)\Pi_3(\mu, \ld).
\end{aligned}
\]
In other words, $T_{10}$ is the only main term. Therefore we have proved
\begin{prop}\label{prop:detA}
\bq\label{formdetA}
\begin{aligned}
\det({\bf A}_{\mu, \eps}-\ld {\bf I}_\eps)&=(\mez i\mu-\ld)^2\mu+\mu(\mez i\mu -\ld)^2O_\eps(\eps^2)+\mu^2(\mez i\mu -\ld)O_\eps(\eps^2)+O_\eps(\eps^3)\Pi_3(\mu, \ld)\\
&\quad+O_\eps(1)\Pi_4(\mu, \ld).
\end{aligned}
\eq
\end{prop}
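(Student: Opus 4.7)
The plan is to compute $\det({\bf A}_{\mu,\eps} - \ld {\bf I}_\eps)$ by cofactor expansion along the second row of the matrix displayed in \eqref{final:A-I}, which is especially efficient because the $(2,3)$ and $(2,4)$ entries are identically zero. This reduces the computation to two $3\times 3$ subdeterminants, each of which I would expand along its columns to obtain a finite sum of terms $T_1,\dots,T_{10}$ as already laid out in \eqref{expand:detA}. The proof then consists of classifying each $T_i$ according to both its order in $\eps$ and its polynomial type in $(\mu,\ld)$, and checking that only $T_{10}$ contains a term not absorbed by the remainders.

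The structural observation driving the $\eps$-accounting is that in \eqref{final:A-I} the only three entries that are \emph{not} accompanied by a factor of $\mu$ or $\ld$ are the $(3,1)$, $(3,2)$ and $(4,1)$ entries, of sizes $O_\eps(\eps^4)$, $-\eps^2$, and $-1$ respectively. A $4\times 4$ permutation product picks one entry from each row and each column, so at most two of these three ``constant'' entries can appear simultaneously in a single term. The only compatible pair is $(3,2)(4,1)$, and any such term must be multiplied by a product $(j,k)(j',k')$ with $\{j,j'\}=\{1,2\}$ and $\{k,k'\}=\{3,4\}$; but the $(2,3)$ and $(2,4)$ entries vanish, so this pattern contributes nothing. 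Every surviving term therefore carries at most one of the three constant entries, forcing three of the remaining four matrix entries to contribute at least one factor of $\mu$ or $\ld$, which is what produces the polynomial types $\Pi_3$ and $\Pi_4$ in the final formula.

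Given this, I would go line by line through $T_1,\dots,T_9$ inserting the leading forms ${\bf \wt A}_{11}={\bf \wt A}_{44}=i\mu$, ${\bf \wt A}_{22}={\bf \wt A}_{33}=\tfrac12 i\mu$, ${\bf \wt A}_{14}=\mu$, ${\bf \wt A}_{12}={\bf \wt A}_{21}=-i\mu\eps$, ${\bf \wt A}_{34}=-\tfrac32 i\mu\eps$, ${\bf \wt A}_{43}=-\tfrac12 i\mu\eps$, and counting factors of $\eps$ and $\mu$. For instance $T_1$ and $T_6$ carry the $\mu^2\eps^2$ factor from ${\bf \wt A}_{21}{\bf \wt A}_{12}$ times two $(\mu+\ld)$-type factors, which yields $O_\eps(\eps^2)\Pi_4$; the terms involving $-\eps^2$ from the $(3,2)$ slot (that is $T_3,T_4,T_8$) lose one $(\mu+\ld)$ factor and land in $O_\eps(\eps^3)\Pi_3$ or better; and $T_5$ is purely $O_\eps(1)\Pi_4$. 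Only $T_{10}$, which pairs the constant $(4,1)$ entry with the three entries ${\bf \wt A}_{22},\,{\bf \wt A}_{14},\,{\bf \wt A}_{33}$, produces at leading order $-(\tfrac12 i\mu-\ld)(\mu)(-1)(\tfrac12 i\mu-\ld) = \mu(\tfrac12 i\mu-\ld)^2$, with the other terms in $T_{10}$ being exactly the three correction pieces displayed in \eqref{formdetA}.

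The bookkeeping of exactly which combinations of $\mu O_\eps(\eps^2)$, $\ld O_\eps(\eps^2)$, and $O_\eps(\eps^4)$ remainders produce $\Pi_3$ versus $\Pi_4$, and of showing that no ``rogue'' $\ld^3$ contribution enters the $\Pi_3$ part (recall $\Pi_3$ has no $\ld^3$ by \eqref{def:Pi34}), is the one slightly delicate point; it will follow because any term containing three $\ld$-factors must draw the $\ld$'s from the four diagonal positions $(1,1),(2,2),(3,3),(4,4)$, and the remaining factor must then come from one of ${\bf \wt A}_{14}$, ${\bf \wt A}_{41}=-1$, or their corrections, all of which either carry a $\mu$ or sit outside the permutation patterns that survive. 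Collecting $T_1,\dots,T_9$ into the remainders $O_\eps(\eps^3)\Pi_3(\mu,\ld) + O_\eps(1)\Pi_4(\mu,\ld)$ and keeping the explicit leading contribution of $T_{10}$ then gives \eqref{formdetA}.
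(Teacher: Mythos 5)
Your proposal follows the paper's proof almost exactly: expand $\det({\bf A}_{\mu,\eps}-\ld{\bf I}_\eps)$ along the second row to get the ten terms $T_1,\dots,T_{10}$, use the observation that only the $(3,1)$, $(3,2)$, $(4,1)$ entries lack a $\mu$ or $\ld$ factor and the $(3,2)(4,1)$ pairing is killed by the zero $(2,3)$, $(2,4)$ entries, classify each $T_i$ into $O_\eps(\eps^k)\Pi_3$ or $O_\eps(\eps^k)\Pi_4$, and extract the main term from $T_{10}$. A couple of the attributions in your classification are slightly off (e.g.\ $T_6$ gets its $\eps^2$ from ${\bf \wt A}_{34}{\bf \wt A}_{43}$ rather than from ${\bf \wt A}_{21}{\bf \wt A}_{12}$, and $T_8$ involves the $(4,1)$ entry $-1$, not the $(3,2)$ entry $-\eps^2$), but the resulting bounds and the conclusion are unchanged.
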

It will turn out that the precise coefficients of $\eps^2$ in the $O_\eps(\eps^2)$ terms 
in \eqref{formdetA} are not needed, thanks to  presence of the factor $(\mez i\mu -\ld)$.
  \section{Perturbation of eigenfunctions due to sidebands} 
 The small parameters involved in our proof are $\ld$, $\mu$ and $\eps$, where we recall that $\mu \in [0, \mez)$.
As above, the notation $O(\eps^k)$ signifies smooth functions $f(\ld, \mu, \eps)$  
bounded  by $C|\eps|^k$ for small $(\ld, \mu, \eps)$.  
In case $f$ depends only on $\eps$ we write $O(\eps^k)=O_\eps (\eps^k)$. 

Moreover, the notation $O(\mu^m+|\ld|^m)$, for $m\in \{0, 1, \dots\}$, signifies smooth functions $f(\ld, \mu, \eps)$ 
that satisfy both (i) $f(\ld, \mu, \eps)\le C(\mu^m +|\ld|^m)$ for small $(\ld, \mu, \eps)$ 
and (ii) $\mu^{-m}f(\ld, \mu, \eps)=\wt f(\frac{\ld}{\mu}, \mu, \eps)$ for some smooth function $\wt f$.

 \subsection{Lyapunov-Schmidt method}  
 Our ultimate goal is to study the eigenvalue problem  $\mathcal{L}_{\mu,\eps}U=\ld U$ for fixed 
 small parameters $\eps$ and $\mu\ge 0$.    Recall from Section 4 that $\cU$, the linear subspace of $(L^2(\T))^2$ spanned by the vector $U_j$ 
given in Theorem \ref{kernel:L}, is the  generalized eigenspace associated to the eigenvalue $\ld=0$ of $\cL_{0, \eps}$.  
Permitting $\mu>0$ we seek  generalized eigenvectors bifurcating from $U_j$. By \cite{Kato} there exists a four dimensional nullspace of $\cL_{\mu, \eps}$ for small $\mu$.
The Lyapunov-Schmidt method splits the eigenvalue problem into finite and infinite dimensional parts. 
In our case, there are at least two difficulties (i) the generalized kernel $\cU$ of $\cL_{0, \eps}$ is strictly larger 
than its kernel and (ii) $\cL_{0, \eps}$ is neither self-adjoint nor skew-adjoint.  
We resolve these difficulties by using Theorem \ref{theo:rangeL}.

Recalling that $\Pi$ denotes the orthogonal  projection from $L^2(\T)^2$ onto $\mathcal{U}^\perp$ 
with respect to the $(L^2(\T))^2$ inner product, we want to solve the system 
  \begin{align}\label{PL1}
  &\Pi (\mathcal{L}_{\mu, \eps}-\ld \mathrm{Id})U=0,\\ \label{PL2}
   &(\mathrm{Id}-\Pi) (\mathcal{L}_{\mu, \eps}-\ld \mathrm{Id})U=0.
  \end{align}
 If we seek solutions of the form $U=\sum_{\alpha=1}^4 \alpha_jU_j+W$ with $W\in  H^1(\T)^2\cap \mathcal{U}^\perp$, \eqref{PL1} is equivalent to 
  \bq\label{PL1:2}
  \Pi (\mathcal{L}_{\mu, \eps}-\ld \mathrm{Id})\big(\sum_{j=1}^4\alpha_j U_j +W\big)=0. 
  \eq
 By the linearity in $\alpha_j$,  clearly $W=\sum_{j=1}^4\alpha_jW_j$, where each {\it sideband function} $W_j$ solves 
   \bq\label{PL1:3}
 \Pi (\mathcal{L}_{\mu, \eps}-\ld \mathrm{Id})(U_j +W_j)=0 
  \eq 
 for $j=1,2,3,4$. According to Theorem \ref{theo:rangeL}, $\mathrm{Ker}(\Pi \cL_{0, \eps})=\cU$, so that $\Pi \cL_{0, \eps}U_j=0$ and \eqref{PL1:3} can be written in greater detail as 
   \bq\label{PL1:4}
T_{\ld, \mu, \eps}W_j:= \left[\Pi \mathcal{L}_{0, \eps}+ \Pi \big(\mu (L^1_\eps+L^\sharp)-\ld \mathrm{Id}\big)\right]W_j=-\mu \Pi (L^1_\eps+L^\sharp) U_j.
  \eq
By Theorem \ref{theo:rangeL} the operator $\Pi\cL_{0, \eps}: (H^1(\T))^2\cap \cU^\perp\to \cU^\perp\subset (L^2(\T))^2$ 
is an isomorphism. So its inverse is also bounded by virtue of the open mapping theorem. Let us denote
\bq\label{def:Xi}
\Xi_\eps=(\Pi\cL_{0, \eps})^{-1}:\cU^\perp\to (H^1(\T))^2\cap \cU^\perp  
\eq
 and call it the {\it inverse operator}.    Then 
\[
\mathrm{Id}-\Xi_\eps T_{\ld, \mu, \eps}  =  -\Xi_\eps \Pi \big(\mu (L^1_\eps+L^\sharp)-\ld \mathrm{Id}\big).
\]
 {\it Thus for each small $\eps$,  if  $\mu$ and $\ld$ are sufficiently small}, 
 then the Neumann series $\sum_{m=0}^\infty (\mathrm{Id}-\Xi_\eps T_{\ld, \mu, \eps})^m$ converges 
 as an operator on $ (H^1(\T))^2\cap \cU^\perp$.  
 Therefore $\Xi_\eps T_{\ld, \mu, \eps}$ is invertible  from $ (H^1(\T))^2\cap \cU^\perp$ 
 onto $H^1(\T)^2\cap \cU^\perp$.  Its inverse is 
 \[
(\Xi_\eps T_{\ld, \mu, \eps})^{-1}  =  \big(\mathrm{Id}-(\mathrm{Id}-\Xi_\eps T_{\ld, \mu, \eps})\big)^{-1}  
=  \sum_{m=0}^\infty (\mathrm{Id}-\Xi_\eps T_{\ld, \mu, \eps})^m  
=  \sum_{m=0}^\infty(-1)^m\big[\Xi_\eps \Pi \big(\mu (L^1_\eps+L^\sharp)-\ld \mathrm{Id}\big)\big]^m.
 \]
Then applying $\Xi_\eps$ followed by $(\Xi_\eps T_{\ld, \mu, \eps})^{-1}$ to  \eqref{PL1:4}, we obtain
 \bq\label{Neumann:Wj}
   \begin{aligned}
   W_j  &=-\mu \sum_{m=0}^\infty(-1)^m\big[\Xi_\eps\Pi \big(\mu (L^1_\eps+L^\sharp)-\ld \mathrm{Id}\big)\big]^m\Xi_\eps\Pi (L^1_\eps+L^\sharp) U_j\in  (H^1(\T))^2\cap \cU^\perp.
   \end{aligned}
   \eq 
 This is the solution of \eqref{PL1:3}.   In particular,  it is clear that  
   \bq\label{est:W}
   \|W_j\|_{(H^1(\T))^2} =\mu O(1).
   \eq
We note that $U\ne 0$ if and only if $[\alpha_j]_{j=1}^4\ne 0$.  Substituting $W=\sum_{j=1}^4\alpha_jW_j$ into \eqref{PL2} gives 
  \bq\label{PL2:2}
  \sum_{j=1}^4\alpha_j(\mathrm{Id}-\Pi) (\mathcal{L}_{\mu, \eps}-\ld \mathrm{Id})(U_j+W_j)=0.
  \eq
 Now for any $V$, $(\mathrm{Id}-\Pi)V=0$ if and only if $V\in \mathcal{U}^\perp$.  
 Thus \eqref{PL2:2}  has a nontrivial solution $[\alpha_j]_{j=1}^4$ if and only if 
  \bq\label{eq:det}
  \det \Big((\mathcal{L}_{\mu, \eps}-\ld \mathrm{Id})(U_j+W_j), U_k\Big)_{jk}=0,
  \eq
  where $(W_j, U_k)=0$ for all $j, k=1,...,4$.   
For the sake of normalization, \eqref{eq:det}  is equivalent to 
\bq\label{def:cP}
\cP(\ld; \mu, \eps):=\det({\bf A}_{\mu, \eps}-\ld {\bf I}_\eps+{\bf B}_{\mu, \eps})=0,
\eq 
where  the {\it sideband matrix} is
\bq\label{def:B}
 {\bf B}_{\mu, \eps}=\Big(\frac{(\mathcal{L}_{\mu, \eps}W_j, U_k)}{(U_k, U_k)}\Big)_{j, k=\overline{1, 4}}.
 \eq
 Therefore we have proved
 \begin{prop}\label{prop:instab}
The Stokes wave $(\eta^*, \psi^*, c^*, P*=0)$ is modulationally unstable if there exists a small  rational number  $\mu>0$  such that \eqref{def:cP} has a sufficiently small root $\ld$ with positive real part. 
\end{prop}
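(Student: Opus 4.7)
The plan is to organize the Lyapunov--Schmidt framework developed above into a proof. A small eigenvalue of $\cL_{\mu,\eps}$ with positive real part corresponds to a nontrivial $U\in(H^1(\T))^2$ solving $\cL_{\mu,\eps}U=\ld U$, and splitting along the orthogonal decomposition $(L^2(\T))^2=\cU\oplus\cU^\perp$ gives the pair \eqref{PL1}--\eqref{PL2}. First I would write $U=\sum_{j=1}^4\alpha_j U_j+W$ with $W\in(H^1(\T))^2\cap\cU^\perp$. Linearity in the $\alpha_j$ then reduces the $\cU^\perp$ component to solving \eqref{PL1:4} for each sideband function $W_j$, $j=1,\dots,4$.

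The key step is to show that \eqref{PL1:4} has a unique solution $W_j$ for all sufficiently small $(\ld,\mu)$, depending smoothly on the parameters. This uses Theorem \ref{theo:rangeL}: since $\Pi\cL_{0,\eps}:(H^1(\T))^2\cap\cU^\perp\to\cU^\perp$ is an isomorphism, the open mapping theorem produces a bounded inverse $\Xi_\eps$. Applying $\Xi_\eps$ to \eqref{PL1:4} and writing $\Xi_\eps T_{\ld,\mu,\eps}=\mathrm{Id}+\Xi_\eps\Pi(\mu(L^1_\eps+L^\sharp)-\ld\,\mathrm{Id})$, I would invert this operator via a Neumann series on $(H^1(\T))^2\cap\cU^\perp$ for $\mu$ and $|\ld|$ small relative to $\|\Xi_\eps\|$, yielding the explicit expansion \eqref{Neumann:Wj} and the bound $\|W_j\|_{(H^1(\T))^2}=\mu\,O(1)$.

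With the sideband functions in hand, the $\cU$--component \eqref{PL2} becomes the finite-dimensional linear system \eqref{PL2:2} in the unknowns $(\alpha_j)_{j=1}^4$. Since $(\mathrm{Id}-\Pi)V=0$ iff $V\perp\cU$, this system has a nontrivial solution precisely when the $4\times 4$ matrix $\big((\cL_{\mu,\eps}-\ld\,\mathrm{Id})(U_j+W_j),U_k\big)_{jk}$ is singular. Because $W_j\in\cU^\perp$ we have $(W_j,U_k)=0$, which eliminates the $\ld$--contribution coming from $W$, and dividing the $k$-th column by $(U_k,U_k)\ne 0$ (using \eqref{UU}) rewrites the singularity condition as $\cP(\ld;\mu,\eps)=0$ with ${\bf B}_{\mu,\eps}$ defined by \eqref{def:B}. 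Any sufficiently small root $\ld$ of $\cP(\cdot;\mu,\eps)$ with positive real part then yields, via a chosen nontrivial $(\alpha_j)$ and the corresponding $W=\sum_j\alpha_j W_j$, a genuine eigenvector $U\ne 0$ of $\cL_{\mu,\eps}$ with eigenvalue $\ld$, which is the spectral modulational instability. The only point requiring care is making the Neumann radius and the equivalence with the determinant condition uniform in the parameters, but this is automatic once $(\ld,\mu)$ is taken small relative to $\|\Xi_\eps\|$; the genuinely hard work is deferred to the next section, where one must expand ${\bf A}_{\mu,\eps}$, ${\bf I}_\eps$ and ${\bf B}_{\mu,\eps}$ to high enough order in $(\mu,\eps)$ to exhibit a root of $\cP$ of the form \eqref{eigenvalue}.
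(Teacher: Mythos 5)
Your proposal reproduces the paper's own argument almost step for step: the orthogonal splitting along $\cU\oplus\cU^\perp$, the reduction of the $\cU^\perp$--component to \eqref{PL1:4} for each sideband $W_j$, inversion via the Neumann series after applying $\Xi_\eps=(\Pi\cL_{0,\eps})^{-1}$, and the reduction of \eqref{PL2} to the singularity of the $4\times 4$ matrix which, after using $(W_j,U_k)=0$ and normalizing columns by $(U_k,U_k)$, is exactly $\cP(\ld;\mu,\eps)=0$. This is the same route as the paper, and it is correct.
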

\subsection{Analysis of the sideband matrix}
It follows from \eqref{est:W} that ${\bf B_{\mu, \eps}}= O(\mu)$ .   
In this subsection, we derive more precise estimates for ${\bf B}_{\mu, \eps}$. 
\begin{lemm}
\begin{align}\label{JLJU1}
&J\mathcal{L}_{0, \eps}JU_1=\begin{bmatrix}-1\\0 \end{bmatrix}+\eps  \begin{bmatrix}2C\\ -2S\end{bmatrix}+O_\eps(\eps^2),\\\label{JLJU2}
&J\mathcal{L}_{0, \eps}JU_2=\eps\begin{bmatrix}3C_2\\-4S_2\end{bmatrix}+O_\eps(\eps^2),\\ \label{JLJU3}
&J\mathcal{L}_{0, \eps}JU_3=\eps\begin{bmatrix}3S_2\\4C_2\end{bmatrix}+O_\eps(\eps^2),\\ \label{JLJU4}
&J\mathcal{L}_{0, \eps}JU_4=\eps \begin{bmatrix} 2S\\2C\end{bmatrix}+O_\eps(\eps^2).
\end{align}
In particular, 
\bq\label{boundJLJ}
\Pi J\mathcal{L}_{0, \eps}JU_k=O_\eps(\eps)\quad\forall k.
\eq
\end{lemm}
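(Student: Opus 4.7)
The proof amounts to a direct expansion. For each $k \in \{1,2,3,4\}$, I would first compute $JU_k$ from the expansions in Corollary \ref{coro:expandUj}, then apply $\mathcal{L}_{0,\eps}$ using the matrix form \eqref{L0,eps} together with the coefficient expansions
\[
p = 1 - 2\eps\cos x + O_\eps(\eps^2), \quad \partial_x p = 2\eps\sin x + O_\eps(\eps^2), \quad \frac{1+q}{\zeta'} = 1 - 2\eps\cos x + O_\eps(\eps^2)
\]
from Lemma \ref{lemm:expandpq}, and finally apply $J$ once more. The Fourier multiplier $|D|$ acts as $|D|\cos(\ell x) = |\ell|\cos(\ell x)$ and $|D|\sin(\ell x) = |\ell|\sin(\ell x)$, reducing everything to elementary trigonometry.

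At order $\eps^0$ only $\mathcal{L}_{0,0} = \bigl[\begin{smallmatrix} \partial_x & |D| \\ -1 & \partial_x\end{smallmatrix}\bigr]$ is in play, and the leading terms $JU_k^{(0)}$ are $(1,0)^T$, $(\cos x,\sin x)^T$, $(\sin x,-\cos x)^T$, $(0,-1)^T$ for $k=1,2,3,4$. A quick calculation gives $J\mathcal{L}_{0,0}JU_1 = (-1,0)^T$ and $J\mathcal{L}_{0,0}JU_k = 0$ for $k = 2,3,4$; this structural vanishing is exactly the statement that $JU_k^{(0)} \in \ker \mathcal{L}_{0,0}$ for those $k$, which is transparent via the identity $J\mathcal{L}_{0,\eps}J = \mathcal{L}_{0,\eps}^*$ from \eqref{L*}. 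At order $\eps^1$, each component of $\mathcal{L}_{0,\eps}JU_k$ picks up three contributions: the zeroth-order operator on the first-order correction of $JU_k$, the $-2\eps\cos x$ correction of $p$ or of $\frac{1+q}{\zeta'}$ acting on the leading $JU_k^{(0)}$, and the $\partial_x p$-multiplication term. Expanding products by $2\cos x \cos(\ell x) = \cos((\ell-1)x) + \cos((\ell+1)x)$, etc., the constant and first-harmonic modes cancel exactly, leaving the second-harmonic expressions in \eqref{JLJU2}--\eqref{JLJU3}, the first-harmonic expression in \eqref{JLJU4}, and the $(2\cos x, -2\sin x)^T$ correction in \eqref{JLJU1}.

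The uniform bound \eqref{boundJLJ} then follows immediately. For $k \in \{2,3,4\}$ the entire right-hand side of \eqref{JLJU2}--\eqref{JLJU4} is already $O_\eps(\eps)$, so boundedness of $\Pi$ preserves this. For $k = 1$, the leading constant vector $(-1,0)^T$ is rewritten via Corollary \ref{coro:expandUj} as $-U_4(\eps) + \eps(\cos x, -\sin x)^T + O_\eps(\eps^2)$; since $\Pi U_4(\eps) = 0$, projection yields $O_\eps(\eps)$.

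The only real obstacle is bookkeeping: one must track three separate first-order contributions in each component and verify, mode by mode, that the constant and $e^{\pm i x}$ harmonics cancel. There is no analytical subtlety; the structural vanishing at $\eps = 0$ guarantees the order-$\eps$ expressions are well-defined, and the rest is careful algebra with product-to-sum identities.
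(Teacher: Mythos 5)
Your proposal is correct and takes essentially the same route as the paper: a direct order-by-order expansion of $J\mathcal{L}_{0,\eps}J U_k$ using the coefficient expansions of Lemma~\ref{lemm:expandpq} and the $U_j$-expansions of Corollary~\ref{coro:expandUj}. The only (minor) stylistic difference is in handling \eqref{boundJLJ} for $k=1$: the paper expands $\Pi$ to first order via the mutual near-orthogonality of the $U_m$ and observes that the $\eps$-independent term drops out, while you rewrite the constant $(-1,0)^T$ exactly as $-U_4(\eps)+O_\eps(\eps)$ and use $\Pi U_4=0$; both are valid and equivalent.
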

\begin{proof}
The operator $J$ is the skew-symmetric matrix in the Hamiltonian form \eqref{JK}.  
The expansions \eqref{JLJU1}-\eqref{JLJU4} are obtained by straightforward calculations using  
Lemma \ref{lemm:expandpq}.   
As for \eqref{boundJLJ} we note that $(U_m, U_n)=O_\eps(\eps^2)$ for $m\ne n$, so that
 \bq\label{expand:Pi}
\Pi V=V-\sum_{m=1}^4 \frac{(V, U_m)}{(U_m, U_m)}U_m+O(\eps^2)\qquad \forall\ V\in L^2(\T)^2.
\eq
 We put $V = J\mathcal L_{0,\eps} J U_k$.  Then \eqref{boundJLJ} is obvious for $k=2,3,4$.  
As for $k=1$, we use \eqref{expand:Pi}, \eqref{JLJU1} and \eqref{expand:U4} to find that 
the term independent of $\eps$ vanishes.  So \eqref{boundJLJ} follows. 
\end{proof}
\begin{lemm}\label{lemm:parity}
The following parity properties hold. 

(a)  The projection $\Pi$ preserves the parity. That is,
\bq\label{parity:Pi}
\Pi V=\begin{bmatrix} \mathrm{odd}\\ \mathrm{even} \end{bmatrix}\, \text{if }\,  V=\begin{bmatrix} \mathrm{odd}\\ \mathrm{even} \end{bmatrix},\quad\text{and}\quad \Pi V=\begin{bmatrix} \mathrm{even}\\ \mathrm{odd} \end{bmatrix}\, \text{if }\,  V=\begin{bmatrix} \mathrm{even}\\ \mathrm{odd} \end{bmatrix}.
\eq
(b) The inverse operator $\Xi_\eps=(\Pi\mathcal{L}_{0, \eps})^{-1}:\cU^\perp\to (H^1(\T))^2\cap \cU^\perp$  switches the parity. That is,
\bq\label{parity:Xi}
\Xi_\eps F=\begin{bmatrix} \mathrm{even}\\ \mathrm{odd} \end{bmatrix}\, \text{if }\,  F=\begin{bmatrix} \mathrm{odd}\\ \mathrm{even} \end{bmatrix},\quad\text{and}\quad \Xi_\eps F=\begin{bmatrix} \mathrm{odd}\\ \mathrm{even} \end{bmatrix}\, \text{if }\,  F=\begin{bmatrix} \mathrm{even}\\ \mathrm{odd} \end{bmatrix}.
\eq
\end{lemm}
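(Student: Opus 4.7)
The plan is to exploit the parity structure of the eigenbasis $\{U_j\}$ together with that of the coefficient functions $p$, $q$, $\zeta'$, and to reduce both statements to elementary integration-parity arguments rather than any hard analysis.

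For part (a), the key observation is that the basis splits cleanly by parity: by Theorem \ref{kernel:L} and Corollary \ref{coro:expandUj}, $U_1$ and $U_2$ are of the form $\begin{bmatrix}\mathrm{odd}\\ \mathrm{even}\end{bmatrix}$, while $U_3$ and $U_4$ are of the form $\begin{bmatrix}\mathrm{even}\\ \mathrm{odd}\end{bmatrix}$. These parities hold exactly, not just at leading order in $\eps$, since they come from the evenness of $\eta^*, \zeta'$ and oddness of $\psi^*, \zeta, B^*$. Write $\Pi V = V - PV$, where $P$ is the oblique projection onto $\mathcal{U}$. Suppose $V$ has parity $\begin{bmatrix}\mathrm{odd}\\ \mathrm{even}\end{bmatrix}$. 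Then the inner products $(V, U_3)$ and $(V, U_4)$ vanish because both integrands become odd. Hence $PV$ must be a linear combination of $U_1$ and $U_2$ alone (the coefficients being determined by a $2\times 2$ Gram subsystem, which is nonsingular for small $\eps$ since it reduces to the identity at $\eps=0$), and so $\Pi V = V - PV$ again has parity $\begin{bmatrix}\mathrm{odd}\\ \mathrm{even}\end{bmatrix}$. The other case is symmetric.

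For part (b), I would first observe that $\mathcal{L}_{0,\eps}$ itself swaps parity. In the matrix representation \eqref{L0,eps}, the coefficients $p(x)$ and $(1+q(x))/\zeta'(x)$ are even by Lemma \ref{lemm:expandpq} (together with the oddness of $\zeta$), while the Fourier multiplier $|D|$ preserves parity. A direct componentwise check then shows that if $W = \begin{bmatrix}\mathrm{even}\\ \mathrm{odd}\end{bmatrix}$, then $p\p_x W_1 + \p_x(pW_1)$ and $|D| W_2$ are both odd, and $-\tfrac{1+q}{\zeta'} W_1$ and $p\p_x W_2$ are both even; hence $\mathcal{L}_{0,\eps} W$ has parity $\begin{bmatrix}\mathrm{odd}\\ \mathrm{even}\end{bmatrix}$, and analogously in the other direction. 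Composing with part (a), $\Pi\mathcal{L}_{0,\eps}$ maps $(H^1(\T))^2 \cap \mathcal{U}^\perp$ restricted to one parity class isomorphically onto the other parity class in $\mathcal{U}^\perp$.

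The conclusion for $\Xi_\eps$ is then automatic: since $\Xi_\eps = (\Pi \mathcal{L}_{0,\eps})^{-1}$ is the two-sided inverse of a parity-swapping isomorphism between two closed subspaces, it itself must be parity-swapping, giving \eqref{parity:Xi}. I do not expect any genuine obstacle; the whole argument is bookkeeping of parities, the only point requiring minor attention being that the Gram submatrices used implicitly in decomposing $\Pi$ are invertible for small $\eps$, which follows from the expansion \eqref{UU}.
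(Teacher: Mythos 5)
Your argument is correct and follows essentially the same route as the paper: for (a), both proofs exploit the fact that parity makes the Gram matrix $((U_j,U_k))_{j,k}$ block-diagonal (the paper packages this via a Gram--Schmidt basis $\{U_j^\sharp\}$ of matching parities, you use the block structure of the orthogonal projection directly); for (b), both proofs combine part (a), the parity-swapping of $\mathcal L_{0,\eps}$ coming from the evenness of $p$, $q$, $\zeta'$, and the isomorphism statement of Theorem \ref{theo:rangeL} (the paper realizes it by showing the wrong-parity component of $\Xi_\eps F$ lies in $\mathrm{Ker}(\Pi\mathcal L_{0,\eps})\cap\mathcal U^\perp=\{0\}$, you argue more abstractly that a parity-swapping bijection has a parity-swapping inverse, which requires the small implicit observation that the parity decomposition of $\mathcal U^\perp$ is respected). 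One minor slip: $\mathrm{Id}-\Pi$ is the \emph{orthogonal} projection onto $\mathcal U$, not an oblique one, though your subsequent use of $L^2$ inner products with the $U_j$ treats it correctly as such.
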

\begin{proof}
(a) By Gram-Schmidt  orthonormalization we obtain four mutually orthogonal vectors $ U^\sharp_j$ 
that span $\cU$ such that each $ U^\sharp_j$ has the same parity as $U_j$.   
Then \eqref{parity:Pi}  follows at once from the formula  $\Pi V=V-\sum_{j=1}^4 (V, U^\sharp_j)U^\sharp_j$ 
and the parity of the $U^\sharp_j$. 

(b) Let us prove the first assertion in \eqref{parity:Xi}, as the second one follows analogously.   
Assuming $F=(\mathrm{odd}, \mathrm{even})^T\in \cU^\perp$, 
we will prove that $V=\Xi_\eps F=(\mathrm{even}, \mathrm{odd})^T$, where $V\in \cU\cap  (H^1(\T))^2$.  
To that end, for any function $f:\T\to \Cc$ we denote its even and odd parts by superscripts: 
\[
f^e(x)=\mez(f(x)+f(-x)),\quad f^o(x)=\mez(f(x)-f(-x)).
\]
Then we decompose $V=(v_1, v_2)^T$ as
\[
V=V'+V'',\quad V'=(v_1^e, v_2^o)^T,\quad V''=(v_1^o, v_2^e)^T. 
\]
It remains to prove that $V''=0$.   
Clearly $\cL_{0, \eps}$ switches the parity, and hence  so does $\Pi\cL_{0, \eps}$ in view of \eqref{parity:Pi}.   
In particular, $\Pi\cL_{0, \eps}V'=(\mathrm{odd}, \mathrm{even})^T$ and  $\Pi\cL_{0, \eps}V''=(\mathrm{even}, \mathrm{odd})^T$. Since $\Pi\cL_{0, \eps}V'+\Pi\cL_{0, \eps}V''=\Pi\cL_{0, \eps}V=F=(\mathrm{odd}, \mathrm{even})^T$, 
we must have $\Pi\cL_{0, \eps}V''=0$. Thus $V''\in \mathrm{Ker}(\Pi\cL_{0, \eps})=\cU$ by virtue of Theorem \ref{theo:rangeL}.   
In order to conclude that $V''=0$, it remains to prove $V''\in \cU^\perp$. Indeed,  we recall  that $U_1$ and $U_2$ are $(\mathrm{odd}, \mathrm{even})^T$, whereas $U_3$ and $U_4$ are $(\mathrm{even}, \mathrm{odd})^T$.   
In particular, $V''$ has opposite parity compared to $U_3$ and $U_4$, so that $(V'', U_3)=(V'', U_4)=0$.   
On the other hand,  for $j=1, 2$,  writing  the components as $U_j=(u_j^{(1)}, u_j^{(2)})$ where $u_j^{(1)}$ is odd and $u_j^{(2)}$ is even, 
the simple change of variables $-x\mapsto x$ implies that 
\begin{align*}
(V'', U_j)&=\mez\int_\T \big(v_1(x)-v_1(-x)\big)u_j^{(1)}(x)dx+\mez\int_\T \big(v_2(x)+v_2(-x)\big)u_j^{(2)}(x)dx\\
&=\mez\int_\T \big(v_1(x)+v_1(x)\big)u_j^{(1)}(x)dx+\mez\int_\T \big(v_2(x)+v_2(x)\big)u_j^{(2)}(x)dx\\
&=(V, U_j) = 0 
\end{align*}
because  $V\in \cU^\perp$.   Thus $V''\in\cU^\perp$.  This completes the proof of \eqref{parity:Xi}.
\end{proof}
\begin{lemm}\label{lemm:B:1}
Let $\Pi_2(\mu, \ld)$ denote any polynomial of the form $a_0\mu^2+a_1\mu \ld$.  We have
 \begin{align}\label{estB:lowerleft}
& ({\bf B_{\mu, \eps}})_{jk}= O(\mu^2+|\ld|^2)\quad\text{for}~j\in \{3, 4\},~k\in \{1, 2\},\\ \label{estB:1k}
 &({\bf B_{\mu, \eps}})_{1k}= \mu O_\eps (\eps^2)+O(\mu^2+|\ld|^2)\quad\forall k,\\  \label{estB:21}
 &({\bf B_{\mu, \eps}})_{21}=\frac{3i}{4} \mu \eps+\mu O_\eps(\eps^2)+O(\mu^2+|\ld|^2),\\ \label{estB:22}
  &({\bf B_{\mu, \eps}})_{22}= \mu O_\eps(\eps^2)+O(\mu^2+|\ld|^2),\\ \label{estB:23}
  &({\bf B_{\mu, \eps}})_{23}=-\frac{1}{8}\mu^2+\eps \Pi_2(\mu, \ld)+O(\mu^3+|\ld|^3),\\ \label{estB:24}
  & ({\bf B_{\mu, \eps}})_{24}=\eps \Pi_2(\mu, \ld)+O(\mu^3+|\ld|^3),\\\label{estB:33}
  &({\bf B_{\mu, \eps}})_{33}= \mu O_\eps(\eps^2)+O(\mu^2+|\ld|^2),\\ \label{estB:34}
  &({\bf B_{\mu, \eps}})_{34}=\frac{i}{2}\mu \eps+\mu O_\eps(\eps^2)+O(\mu^2+|\ld|^2),\\ \label{estB:4k}
& ({\bf B_{\mu, \eps}})_{4k}=\mu O_\eps (\eps^2)+O(\mu^2+|\ld|^2)\quad\text{for } k\in \{3, 4\}.
 \end{align}
\end{lemm}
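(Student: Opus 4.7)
The plan is to insert the Neumann expansion \eqref{Neumann:Wj} of $W_j$ into the definition \eqref{def:B}, and then use the decomposition \eqref{decompose:L} to split
\[
(\mathcal{L}_{\mu,\eps} W_j, U_k) = (\mathcal{L}_{0,\eps} W_j, U_k) + \mu\bigl((L^1_\eps + L^\sharp) W_j, U_k\bigr).
\]
For the first piece, since $W_j \in \cU^\perp$, the Hamiltonian adjoint relation \eqref{L*} yields
\[
(\mathcal{L}_{0,\eps} W_j, U_k) = (W_j, \Pi J\mathcal{L}_{0,\eps} J U_k),
\]
which is of size $O(\mu\eps)$ by \eqref{boundJLJ} and \eqref{est:W}. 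For the second piece I would write $W_j = W_j^{(0)} + R_j$ with leading term
\[
W_j^{(0)} = -\mu\,\Xi_\eps \Pi (L^1_\eps + L^\sharp) U_j \quad\text{and}\quad \|R_j\|_{(H^1(\T))^2} = O\bigl(\mu(\mu+|\ld|)\bigr),
\]
so that $R_j$ contributes at most $O(\mu^2 + |\ld|^2)$. For the sharper entries \eqref{estB:23}--\eqref{estB:24}, whose remainder is $O(\mu^3 + |\ld|^3)$, I would iterate the Neumann expansion one more step.

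The parity machinery of Lemma \ref{lemm:parity} is the engine that eliminates the $O(\mu)$ contribution for most entries. Because $\Pi$ preserves parity while $\Xi_\eps$ swaps it, and because $p$ is even, the vector $W_j^{(0)}|_{\eps=0}$ carries a definite parity opposite to that of $U_j$. Whenever the parities of $(L^1_0 + L^\sharp) W_j^{(0)}|_{\eps=0}$ and $U_k$ disagree, the leading inner product vanishes and at best an $\eps$-correction survives; combined with the $O(\eps)$ bound from \eqref{boundJLJ}, this accounts for the $\mu O_\eps(\eps^2)$ estimates in \eqref{estB:22} and \eqref{estB:33}, as well as for the $O(\mu^2 + |\ld|^2)$ bound for the cross-parity block \eqref{estB:lowerleft}. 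A further cancellation occurs for $j\in\{1,4\}$: at $\eps = 0$ direct inspection shows $(L^1_0 + L^\sharp) U_1 = (1, i)^T$ and $L^1_0 U_4 = (i, 0)^T$ already lie in $\cU|_{\eps=0}$, so $\Pi(L^1_\eps + L^\sharp) U_j|_{\eps=0} = 0$ and hence $W_j^{(0)} = O(\mu\eps)$ for $j\in\{1,4\}$. This provides the extra factor of $\eps$ needed for the estimates \eqref{estB:1k} and \eqref{estB:4k}.

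For the entries with explicit leading coefficients I would compute $W_j^{(0)}|_{\eps=0}$ by solving $\Pi\mathcal{L}_{0,0} V = \Pi(L^1_0 + L^\sharp) U_j|_{\eps=0}$ via the Fourier inversion carried out in the proof of Theorem \ref{theo:rangeL}. The right-hand side is supported on the low Fourier modes $|k|\le 1$, so the equation reduces to a handful of scalar relations and $W_j^{(0)}|_{\eps=0}$ comes out as a short trigonometric polynomial after imposing orthogonality to the $U_m|_{\eps=0}$. Pairing $(L^1_\eps + L^\sharp) W_j^{(0)}$ against $U_k$ then produces the stated constants: $\tfrac{i}{2}\mu\eps$ in \eqref{estB:34} and $\tfrac{3i}{4}\mu\eps$ in \eqref{estB:21}, where parity forces exactly one power of $\eps$ to come from either $L^1_\eps$ or $U_k$; and $-\tfrac{1}{8}\mu^2$ in \eqref{estB:23}, arising as the pure $\mu^2$ cross-term $\mu(L^1_0 W_2^{(0)}|_{\eps=0}, U_3|_{\eps=0})/(2\pi)$. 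The main obstacle is this last coefficient, since it is the only leading term that carries no factor of $\eps$ and so cannot be pinned down by parity alone: one must solve $\Pi\mathcal{L}_{0,0} V = \tfrac{1}{2}(S, C)^T$ explicitly, writing $v_2 = b_1 e^{ix} + b_{-1} e^{-ix}$, determining $b_{\pm 1}$ by imposing orthogonality of $V$ to each $U_m|_{\eps=0}$, and carefully tracking signs through identities such as $\sign(D)\sin x = -i\cos x$. Once this coefficient is established, the residual $\eps\Pi_2(\mu,\ld)$ corrections in \eqref{estB:23}--\eqref{estB:24} are controlled by combining the Neumann iteration with the smooth $\eps$-dependence of $\Xi_\eps$ and the uniform bound $\|\Xi_\eps\|_{\cU^\perp\to (H^1(\T))^2\cap\cU^\perp} = O_\eps(1)$ guaranteed by the open mapping theorem.
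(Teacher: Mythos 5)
Your framework coincides with the paper's: you use the same decomposition of $\cL_{\mu,\eps}$, the adjoint identity $(\cL_{0,\eps}W_j,U_k)=(W_j,\Pi J\cL_{0,\eps}JU_k)$, the leading Neumann term $W_j^{(0)}=-\mu\,\Xi_\eps\Pi(L^1_\eps+L^\sharp)U_j$, the parity machinery of Lemma~\ref{lemm:parity}, the observation that $\Pi(L^1_\eps+L^\sharp)U_j\vert_{\eps=0}=0$ for $j\in\{1,4\}$, and the formula $-\tfrac18\mu^2=\mu\big(L^1_0 W_2^{(0)}\vert_{\eps=0},U_3\vert_{\eps=0}\big)/(2\pi)$. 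All of these are correct and match the paper's argument.

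There is, however, a concrete misattribution of where the $\mu\eps$ coefficients of \eqref{estB:21} and \eqref{estB:34} come from. You claim they arise from pairing $(L^1_\eps+L^\sharp)W_j^{(0)}$ against $U_k$. But that expression lives only inside your \emph{second} piece $\mu\big((L^1_\eps+L^\sharp)W_j,U_k\big)$, and since $W_j^{(0)}=O(\mu)$ this piece is already $O(\mu^2)$ — it cannot produce a term linear in $\mu$. Concretely, for $(j,k)=(3,4)$: $W_3^{(0)}\vert_{\eps=0}=\mu\tfrac{i}{4}(S,C)^T$, so $L^1_0W_3^{(0)}\vert_{\eps=0}=\big(-\mu\tfrac12 S,\,-\mu\tfrac14 C\big)^T$, whose pairing with $U_4\vert_{\eps=0}=(1,0)^T$ vanishes, and the first-order $\eps$ cross terms also vanish since each is an integral of $SC$ or $S_k$; hence $\mu\big((L^1_\eps+L^\sharp)W_3^{(0)},U_4\big)=O_\eps(\mu^2\eps^2)$, not $\tfrac{i}{2}\mu\eps$. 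The $\mu\eps$ terms actually come from the \emph{first} piece, which you set up but did not push further: $\big(\cL_{0,\eps}W_j^{(0)},U_k\big)=-\mu\big(\Xi_\eps\Pi L^1_\eps U_j,\,\Pi J\cL_{0,\eps}JU_k\big)$, with the nonzero $\mu\eps$ contribution produced by cross terms in the $\eps$-expansions of $\Xi_\eps\Pi L^1_\eps U_j$ and $J\cL_{0,\eps}JU_k$. For example, the $\eps^0$ part of $\Xi_\eps\Pi L^1_\eps U_3=-\tfrac{i}{4}(S,C)^T+O_\eps(\eps)$ paired against the $\eps^1$ part of $J\cL_{0,\eps}JU_4=\eps(2S,2C)^T+O_\eps(\eps^2)$ gives $-i\pi\eps$, whence $-\mu(-i\pi\eps)/(2\pi)=\tfrac{i}{2}\mu\eps$. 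As written, your plan would return zero for these two entries, so you need to redirect the explicit coefficient computation to the adjoint piece.
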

\begin{rema}
It is crucial to the proof of instability in Section 7 that the coefficient of the leading term $-\frac{1}{8}\mu^2$ in $({\bf B_{\mu, \eps}})_{23}$ is negative. 
\end{rema}
\begin{proof}[Proof of Lemma \ref{lemm:B:1}]
We recall the definition \eqref{def:B} of ${\bf B}_{\mu,\eps}$.
Because  $(U_k, U_k)=2\pi +O_\eps(\eps^2)$, it suffices to prove the same bounds for $(\mathcal{L}_{\mu, \eps}W_j, U_k)$. In view of \eqref{decompose:L} we write 
\[
(\mathcal{L}_{\mu, \eps}W_j, U_k)=(\mathcal{L}_{0, \eps}W_j, U_k)+\mu ((L^1_\eps+L^\sharp) W_j, U_k).
\]
By \eqref{est:W} we have $\mu ((L^1_\eps+L^\sharp) W_j, U_k)=O(\mu^2)$, 
so that it remains to consider $(\mathcal{L}_{0, \eps}W_j, U_k)$.   
From the Neumann series \eqref{Neumann:Wj} we have 
\[
W_j=-\mu\Xi_\eps\Pi (L^1_\eps+L^\sharp) U_j+O(\mu^2+|\ld|^2). 
\]
Hence
\bq \label{Neu} 
(\mathcal{L}_{\mu, \eps}W_j, U_k)=-\mu \big(\mathcal{L}_{0, \eps}\Xi_\eps\Pi L^1_\eps  U_j, U_k\big)
- \mu \big(\mathcal{L}_{0, \eps}\Xi_\eps\Pi L^\sharp U_j, U_k\big)+O(\mu^2+|\ld|^2),
\eq
where  $L^\sharp U_j=0$ for $j\in \{2, 3, 4\}$.
We recall  that $U_1$ and $U_2$ are $(\mathrm{odd}, \mathrm{even})^T$, 
whereas $U_3$ and $U_4$ are $(\mathrm{even}, \mathrm{odd})^T$.
 By Lemma \ref{lemm:parity}, $\Pi$ preserves the parity, while $\Xi_\eps$ switches the parity. On the other hand, it is  easy to check that $L^1_\eps$  preserves the parity, while $\mathcal{L}_{0, \eps}$ 
 switches the parity. Consequently, $\mathcal{L}_{0, \eps}\Xi_\eps\Pi L^1_\eps$ preserves the parity. We deduce that if $U_j$ and $U_k$ have opposite parity, 
 then so do $\mathcal{L}_{0, \eps}\Xi_\eps\Pi L^1_\eps U_j$ and $U_k$.  This observation implies that 
 \bq\label{L0:cross}
 (\mathcal{L}_{0, \eps}\Xi_\eps\Pi L^1_\eps U_j, U_k)=0
 \eq
 both for $j\in \{1, 2\}$, $k\in \{3, 4\}$ and for $j\in \{3, 4\}$, $k\in \{1, 2\}$.   
 Thus the first term in \eqref{Neu} also vanishes, so we obtain  
 \bq\label{estB:1:0}
 (\mathcal{L}_{\mu, \eps}W_j, U_k)=O(\mu^2+|\ld|^2)
 \eq
 both for $j\in \{3, 4\},~k\in\{1, 2\}$ and for $j=2$, $k\in \{3, 4\}$. In particular, this proves \eqref{estB:lowerleft}. 
 
 In order to prove the other estimates, we use $\mathcal{L}_{0, \eps}^*=J\mathcal{L}_{0, \eps}J$ (see \eqref{L*}) to have
 \bq\label{estB:100}
 \begin{aligned}
 (\mathcal{L}_{\mu, \eps}W_j, U_k)&= -\mu\big(\mathcal{L}_{0, \eps}\Xi_\eps\Pi (L^1+L^\sharp)  U_j, U_k\big)+O(\mu^2+|\ld|^2)\\
 &=-\mu\big(\Xi_\eps\Pi (L^1+L^\sharp)  U_j, \Pi J \mathcal{L}_{0, \eps}J U_k\big)+O(\mu^2+|\ld|^2). 
 \end{aligned}
 \eq
According to \eqref{expand:Pi} and \eqref{L1U1},
 \begin{align*}
 &\Pi L^\sharp U_1= \Pi \begin{bmatrix} 1\\ 0\end{bmatrix}=\begin{bmatrix} 1\\ 0\end{bmatrix}-\frac{(L^\sharp U_1, U_4)}{(U_4, U_4)}U_4+O_\eps(\eps^2)= O_\eps(\eps)\\
  &\Pi L^1 U_1= \Pi \begin{bmatrix} 0\\ i\end{bmatrix}+O_\eps(\eps)=O_\eps(\eps).
 \end{align*}
  It follows from this, \eqref{estB:100} and \eqref{boundJLJ} that
   \[
(\mathcal{L}_{\mu, \eps}W_1, U_k)=\mu O_\eps(\eps^2)+O(\mu^2+|\ld|^2)\quad \forall k,
\]
which finishes the proof of \eqref{estB:1k}.  
The proof of \eqref{estB:4k} is similar to \eqref{estB:1k} since $L^\sharp U_4=0$ and 
\[
\Pi L^1 U_4=\Pi \begin{bmatrix} i\\0 \end{bmatrix}+O_\eps(\eps)=O_\eps(\eps) 
\]
 by \eqref{L1U4}. Next, it can be directly checked that
\begin{align}
&\Pi L^1U_2= \begin{bmatrix} \frac{i}{2}S\\ \frac{i}{2}C\end{bmatrix}+\eps  \begin{bmatrix}iS_2\\ -\frac{i}{2}C_2\end{bmatrix}+O_\eps(\eps^2),\\\label{XiPiL1U2:0}
&\Xi_\eps \Pi L^1U_2=\frac{i}{4} \begin{bmatrix}-C\\S\end{bmatrix}+\frac{i}{4}\eps  \begin{bmatrix}1-6C_2\\-3S_2\end{bmatrix}+O_\eps(\eps^2),
\end{align}
 where $L^1U_2$ is given by \eqref{L1U2}. We note that \eqref{XiPiL1U2:0} can be checked by applying the operator
$\Pi\cL_{0,\eps}$ to the right side of \eqref{XiPiL1U2:0}. Taking the inner product with \eqref{JLJU1} and \eqref{JLJU2} gives 
\[
 (\mathcal{L}_{\mu, \eps}W_2, U_k)=
 \begin{cases}
\tdm \pi i \mu \eps+\mu O_\eps(\eps^2)+O(\mu^2+|\ld|^2),\quad k=1,\\
 \mu O_\eps(\eps^2)+O(\mu^2+|\ld|^2),\quad k=2,
 \end{cases}
\]
which  yields \eqref{estB:21} and \eqref{estB:22}. Similarly, we have 
\begin{align}
\Pi L^1U_3=\frac{i}{2}\begin{bmatrix}-C \\S \end{bmatrix}+\eps\frac{i}{2}\begin{bmatrix}1-2C_2\\-S_2 \end{bmatrix}+O_\eps(\eps^2),\\
\Xi_\eps\Pi L^1U_3=-\frac{i}{4}\begin{bmatrix}S \\ C \end{bmatrix}+\eps\frac{3i}{4}\begin{bmatrix}-2S_2\\C_2 \end{bmatrix}+O_\eps(\eps^2).
\end{align}
Consequently, we obtain in view of \eqref{estB:100}, \eqref{JLJU3} and \eqref{JLJU4} that 
\[
 (\mathcal{L}_{\mu, \eps}W_3, U_k)=
 \begin{cases}
\mu O_\eps(\eps^2)+O(\mu^2+|\ld|^2),\quad k=3,\\
i\pi \mu \eps +\mu O_\eps(\eps^2)+O(\mu^2+|\ld|^2),\quad k=4,
 \end{cases}
\]
whence \eqref{estB:33} and \eqref{estB:34} follow. 

Finally, let us prove \eqref{estB:23} and \eqref{estB:24}, which   are an improvement of  \eqref{estB:1:0} for $j=2$. Indeed, using \eqref{Neumann:Wj} and \eqref{decompose:L} we obtain 
\bq\label{form:LW2Uk}
\begin{aligned}
(\mathcal{L}_{\mu, \eps}W_2, U_k)& = -\mu \big(\mathcal{L}_{0, \eps}\Xi_\eps\Pi L^1_\eps U_2, U_k\big) 
+\mu \big(\mathcal{L}_{0, \eps}\Xi_\eps \Pi [\mu(L^1_\eps 
+L^\sharp)-\ld \mathrm{Id}]\Xi_\eps\Pi L^1_\eps U_2, U_k\big)\\
&\quad -\mu^2\big((L^1_\eps+L^\sharp)\Xi_\eps\Pi L^1_\eps U_2, U_k\big)+O(\mu^3+|\ld|^3)\\
&=:I+II+III+O(\mu^3+|\ld|^3),
\end{aligned}
\eq
 where  $k\in \{3, 4\}$. We recall from \eqref{L0:cross}  that $I=0$.  
Next we write the second term as 
\begin{align*}
II= \mu \big(\Xi_\eps \Pi [\mu(L^1_\eps +L^\sharp)-\ld \mathrm{Id} 
]\Xi_\eps\Pi L^1_\eps U_2, \Pi J\mathcal{L}_{0, \eps}JU_k\big)
\end{align*}
and recall \eqref{boundJLJ} and \eqref{def:Pi34} to have 
\bq\label{est:II:B2k}
II=\eps \Pi_2(\mu ,\ld).
\eq 
As for $III$, 
we  compute  
\bq\label{L1XiPiL1U2}
  L^1_\eps\Xi_\eps \Pi L^1_\eps U_2 
  = -\frac{1}{4}\begin{bmatrix}-2C\\ S \end{bmatrix} 
  -\frac{\eps}{4}\begin{bmatrix}2-2C_2\\-4S_2\end{bmatrix}+O_\eps (\eps^2)  \quad 
  \text{ and  } \quad  L^\sharp\Xi \Pi L^1_\eps U_2=O_\eps (\eps^2) 
\eq
using \eqref{XiPiL1U2:0}.   Consequently,
\bq\label{est:III:B2k}
 III=
 \begin{cases}
 -\frac{\pi}{4}\mu^2+\mu^2O_\eps(\eps^2),\quad k=3,\\
\frac{\pi}{4}\mu^2\eps+\mu^2O_\eps(\eps^2),\quad k=4,
 \end{cases}
 \eq
 which combined with \eqref{est:II:B2k} completes the proof of \eqref{estB:23} and \eqref{estB:24}.
\end{proof}

\section{Proof of the modulational instability}
By virtue of Proposition \ref{prop:instab}, the proof of modulational instability reduces to proving the existence of a small root $\ld$ of \eqref{def:cP} with positive real part.
\subsection{Expansion of $\cP(\ld; \mu, \eps)$}
We determine the contribution of ${\bf B}_{\mu, \eps}$ in $\cP(\ld; \mu, \eps)=\det({\bf A}_{\mu, \eps}-\ld {\bf I}_\eps+{\bf B}_{\mu, \eps})$ by inspecting the individual terms of the determinant.  The terms that involve $\bf B_{\mu,\eps}$ are estimated as follows.  
\begin{prop}\label{prop:detB}
The sideband terms in $\cP$ are
\bq\label{contribution:B}
\begin{aligned}
\cP(\ld; \mu, \eps)-\det({\bf A}_{\mu, \eps}-\ld {\bf I}_\eps)&=-\frac{1}{8}\mu^3\eps^2+\mu (\mez i\mu -\ld)^2 O_\eps(\eps^2)+\mu^2(\mez i\mu -\ld)O_\eps(\eps^2) \\
&\quad +O_\eps(\eps^3)\Pi_3(\mu, \ld)+O(\mu^4+|\ld|^4),
\end{aligned}
\eq
where we recall that  $\Pi_3(\mu, \ld)$ denotes any polynomial of the form $a_1\mu^3+a_2\mu^2\ld+a_3\mu\ld^2$.
\end{prop}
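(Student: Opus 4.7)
The approach is to expand $\cP(\ld;\mu,\eps)$ by row multilinearity of the determinant. Writing each row of ${\bf A}_{\mu,\eps}-\ld{\bf I}_\eps+{\bf B}_{\mu,\eps}$ as the sum of the corresponding rows of ${\bf A}_{\mu,\eps}-\ld{\bf I}_\eps$ and of ${\bf B}_{\mu,\eps}$, one obtains
\[
\cP(\ld;\mu,\eps)-\det({\bf A}_{\mu,\eps}-\ld{\bf I}_\eps)=\sum_{\emptyset\ne S\subseteq\{1,2,3,4\}}D_S,
\]
where $D_S$ is the $4\times 4$ determinant whose rows indexed by $i\in S$ are read off ${\bf B}_{\mu,\eps}$ and whose remaining rows are read off ${\bf A}_{\mu,\eps}-\ld{\bf I}_\eps$. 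The goal is to identify $-\tfrac{1}{8}\mu^3\eps^2$ as the sole non-absorbed contribution, produced inside $D_{\{2\}}$ by the entry $({\bf B}_{\mu,\eps})_{23}=-\tfrac{1}{8}\mu^2+\cdots$ paired against the special entries $(A-\ld I)_{32}=-\eps^2$, $(A-\ld I)_{14}=\mu+\cdots$ and $(A-\ld I)_{41}=-1$.

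First I would dispose of the subsets with $|S|\ge 2$. Lemma \ref{lemm:B:1} shows that every entry of ${\bf B}_{\mu,\eps}$ is $O(\mu)$, so any product of two or more ${\bf B}$-entries is at least $O(\mu^2)$; combined with the sizes of the entries of ${\bf A}_{\mu,\eps}-\ld{\bf I}_\eps$ read off \eqref{final:A-I}, a permutation-by-permutation inspection shows $D_S=O(\mu^4+|\ld|^4)$ whenever $|S|\ge 2$. The singletons $S=\{i\}$ with $i\in\{1,3,4\}$ are handled by Laplace-expanding $D_{\{i\}}$ along row $i$ and inserting the bounds \eqref{estB:1k}, \eqref{estB:lowerleft}, \eqref{estB:33}, \eqref{estB:34}, \eqref{estB:4k} against the appropriate $3\times 3$ minors of ${\bf A}_{\mu,\eps}-\ld{\bf I}_\eps$; each resulting product can be matched to one of the admissible error classes on the right-hand side of \eqref{contribution:B}.

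The decisive case is $D_{\{2\}}=\sum_{k=1}^{4}(-1)^{2+k}({\bf B}_{\mu,\eps})_{2k}M^{A-\ld I}_{2k}$, where $M^{A-\ld I}_{2k}$ denotes the $(2,k)$-minor of ${\bf A}_{\mu,\eps}-\ld{\bf I}_\eps$. The contributions from $k=2$ and $k=4$ are absorbed directly into the remainder classes by \eqref{estB:22}, \eqref{estB:24}. For $k=1$, I would expand $M^{A-\ld I}_{21}$ along its first column (using $(A-\ld I)_{42}=0$) to produce a leading factor proportional to $\mu\eps(\mez i\mu-\ld)(i\mu-\ld)$ which, multiplied by $({\bf B}_{\mu,\eps})_{21}=\tfrac{3i}{4}\mu\eps+\cdots$, falls into the classes $\mu(\mez i\mu-\ld)^2 O_\eps(\eps^2)$ and $\mu^2(\mez i\mu-\ld)O_\eps(\eps^2)$ after the substitution $(i\mu-\ld)=(\mez i\mu-\ld)+\mez i\mu$. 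For the crucial $k=3$ term I expand $M^{A-\ld I}_{23}$ along its third row, exploiting $(A-\ld I)_{41}=-1$ and $(A-\ld I)_{42}=0$ together with $(A-\ld I)_{32}=-\eps^2$ from \eqref{A32}, to obtain
\[
M^{A-\ld I}_{23}=-\mu\eps^2-\eps^2(i\mu-\ld)^2+\tfrac{3}{2}\mu^2\eps^2+O_\eps(\eps^5)(i\mu-\ld)+\cdots;
\]
multiplication by $-({\bf B}_{\mu,\eps})_{23}=\tfrac{1}{8}\mu^2-\eps\Pi_2(\mu,\ld)+O(\mu^3+|\ld|^3)$ yields the unique leading term $-\tfrac{1}{8}\mu^3\eps^2$, while every subleading interaction decomposes into the four admissible error classes.

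The main obstacle will be the bookkeeping of subleading cross-terms: every product of the form $\mu^a\ld^b\eps^c$ arising in the expansion must be organized, via $(i\mu-\ld)=(\mez i\mu-\ld)+\mez i\mu$, into the $\mu(\mez i\mu-\ld)^2O_\eps(\eps^2)$, $\mu^2(\mez i\mu-\ld)O_\eps(\eps^2)$, $O_\eps(\eps^3)\Pi_3(\mu,\ld)$, or $O(\mu^4+|\ld|^4)$ classes. In particular one must confirm that no additional $\mu^3\eps^2$ contribution with a definite coefficient appears anywhere else in the expansion, so that the $-\tfrac{1}{8}$ coefficient is faithfully preserved; this is precisely what the sharp form $({\bf B}_{\mu,\eps})_{23}=-\tfrac{1}{8}\mu^2+\eps\Pi_2(\mu,\ld)+O(\mu^3+|\ld|^3)$ from Lemma \ref{lemm:B:1} (together with the exact identity $({\bf A}_{\mu,\eps})_{32}=-\eps^2$ from \eqref{A32}) is designed to ensure.
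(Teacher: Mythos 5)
Your decomposition by row multilinearity is a genuinely different organizing principle than the paper's. The paper classifies the summands in the Leibniz expansion of $\det({\bf A}-\ld{\bf I}+{\bf B})$ by how many entries they draw from the lower left $2\times 2$ block (positions $(3,1),(3,2),(4,1),(4,2)$), the only block of ${\bf A}-\ld{\bf I}$ whose entries carry no factor of $\mu$ or $\ld$, splitting into type~$I$ (exactly one such entry) and type~$II$ (two such entries). Your $D_S$ decomposition instead sorts by which rows are read from ${\bf B}$. Both routes isolate the same dominant product $(1,4)(2,3)(3,2)(4,1)$, with ${\bf B}_{23}=-\tfrac{1}{8}\mu^2+\cdots$, $(A-\ld I)_{14}=\mu+\cdots$, $(A-\ld I)_{32}=-\eps^2$, $(A-\ld I)_{41}=-1$, and both hinge on the sharp coefficient from \eqref{estB:23} together with the exact identity \eqref{A32}; your Laplace expansion of $M^{A-\ld I}_{23}$ at $k=3$ in $D_{\{2\}}$ reproduces exactly this product and the correct sign.

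However, the assertion $D_S=O(\mu^4+|\ld|^4)$ for all $|S|\ge 2$ is false, and the reasoning used to get it is flawed in a way that matters. You argue that a product of $\ge 2$ entries of ${\bf B}$ already supplies $O(\mu^2)$ and that the remaining factors from ${\bf A}-\ld{\bf I}$ supply another $O(\mu^2+|\ld|^2)$; this second claim fails precisely because $(A-\ld I)_{32}=-\eps^2$ and $(A-\ld I)_{41}=-1$ contribute no power of $\mu$ or $\ld$ at all. Concretely, inside $D_{\{1,2\}}$ the summand $({\bf B})_{14}({\bf B})_{23}(A-\ld I)_{32}(A-\ld I)_{41}$ contains the piece $\big[\mu O_\eps(\eps^2)\big]\cdot\big[-\tfrac{1}{8}\mu^2\big]\cdot(-\eps^2)\cdot(-1)=O_\eps(\eps^4)\mu^3$, which lies in $O_\eps(\eps^3)\Pi_3(\mu,\ld)$ but not in $O(\mu^4+|\ld|^4)$. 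The paper treats this exact product as $II_9$ in \eqref{expand:II} and finds it to be $O_\eps(\eps^4)\Pi_3(\mu,\ld)+O(\mu^4+|\ld|^4)$. Since the Proposition's error classes do admit $O_\eps(\eps^3)\Pi_3(\mu,\ld)$, the final conclusion survives, but the statement in your proof must be corrected to ``$D_S$ lies in $O_\eps(\eps^3)\Pi_3(\mu,\ld)+O(\mu^4+|\ld|^4)$ for $|S|\ge 2$,'' verified by an inspection that explicitly tracks the $(A-\ld I)_{32}$ and $(A-\ld I)_{41}$ entries rather than presuming every ${\bf A}-\ld{\bf I}$ factor contributes a power of $\mu$ or $\ld$.
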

\begin{rema}
Analogously to \eqref{formdetA}, we observe that both of the $O_\eps(\eps^2)$ terms in \eqref{contribution:B} have the factor $(\mez i\mu -\ld)$.
\end{rema}
\begin{proof}[Proof of Proposition \ref{prop:detB}] 
For notational simplicity, we write ${\bf A}_{\mu, \eps}={\bf A}$, ${\bf B}_{\mu, \eps}={\bf B}$ and ${\bf I}_\eps={\bf I}$.  
We shall treat any term that is either $O_\eps(\eps^3)\Pi_3(\mu, \ld)$ or  $O(\mu^4+|\ld|^4)$ as a remainder.  
Let us break $4\times 4$ matrices into four $2\times 2$ blocks.  
We  observe that in ${\bf A}-\ld {\bf I}$, given by \eqref{final:A-I}, the only entries without $\mu$ or $\ld$ 
are the $(3, 1)$, $(3, 2)$ and $(4, 1)$ entries, all of which are in the lower left block. In addition, ${\bf B}=O(\mu)$. 
Thus,  possibly except for terms containing entries from the lower left block, each term in the 
Leibniz formula for $\det({\bf A}-\ld {\bf I}+{\bf B})$ and for $\det({\bf A}-\ld {\bf I})$ is  $O(\mu^4+|\ld|^4)$.  
We are left with two types of terms: terms containing exactly one entry, which we call type $I$ terms,  
and those containing two entries of the lower left block, which we call type $II$ terms.   

Among terms of  type $I$, if the only entry of the lower left block comes from ${\bf B}$, 
then it is $O(\mu^4+|\ld|^4)$ thanks to \eqref{estB:lowerleft}.   
It thus suffices to consider type $I$ terms  that  have exactly one entry of ${\bf A}-\ld\bf{I}={\bf A}$ from the lower left  block.  
Noting in addition that ${\bf A}_{31}=O(\eps^4)$, we deduce that the contribution of type $I$ is
 \bq\label{detA-detB:2}
\begin{aligned}
I&= -{\bf A}_{32}({\bf A}_{11}-\ld+{\bf B}_{11}){\bf B}_{23}({\bf A}_{44} 
+{\bf B}_{44})+ {\bf A}_{32}({\bf A}_{11}-\ld+{\bf B}_{11}){\bf B}_{24}({\bf A}_{43}-\ld{\bf I}_{43}+{\bf B}_{43})\\
&\quad +{\bf A}_{32}({\bf A}_{21}+{\bf B}_{21})({\bf A}_{13}+{\bf B}_{13})({\bf A}_{44}-\ld +{\bf B}_{44})\\
& \quad-{\bf A}_{32}({\bf A}_{21}+{\bf B}_{21})({\bf A}_{43}-\ld{\bf I}_{43}+{\bf B}_{43})({\bf A}_{14}+{\bf B}_{14})\\
&\quad-{\bf A}_{41}({\bf A}_{12}+{\bf B}_{12}){\bf B}_{23}({\bf A}_{34}-\ld{\bf I}_{34}+{\bf B}_{34}) 
+{\bf A}_{41}({\bf A}_{12}+{\bf B}_{12})({\bf A}_{33}-\ld +{\bf B}_{33}){\bf B}_{24}\\
&\quad+{\bf A}_{41}({\bf A}_{22}-\ld +{\bf B}_{22})({\bf A}_{13}+{\bf B}_{13})({\bf A}_{34}-\ld{\bf I}_{34}+{\bf B}_{34})\\
&\quad-{\bf A}_{41}({\bf A}_{22}-\ld +{\bf B}_{22})({\bf A}_{33}-\ld +{\bf B}_{33})({\bf A}_{14}+{\bf B}_{14})\\
&\quad+O(\eps^4)\Pi_3(\mu, \ld)+O(\mu^4+|\ld|^4)\\
&=\sum_{m=1}^8I_m+O(\eps^4)\Pi_3(\mu, \ld)+O(\mu^4+|\ld|^4).
\end{aligned}
\eq
By  \eqref{estB:23} and \eqref{estB:24} we have ${\bf B}_{23},~ {\bf B}_{24}=O(\mu^2+|\ld|^2)$, so that 
\bq\label{Im}
I_m=O(\mu^4+|\ld|^4),\quad m\in \{1, 2, 5, 6 \}.
\eq
 Using Lemma \ref{lemm:B:1} we find that 
\bq\label{I347}\begin{aligned}
&I_3={\bf A}_{32}{\bf A}_{21}{\bf A}_{13}({\bf A}_{44}-\ld)+O_\eps(\eps^5)\Pi_3(\mu, \ld)+O(\mu^4+|\ld|^4),\\
&I_4=-{\bf A}_{32}{\bf A}_{21}({\bf A}_{43}-\ld{\bf I}_{43}){\bf A}_{14}+ O_\eps(\eps^4)\Pi_3(\mu, \ld)+O(\mu^4+|\ld|^4),\\
&I_7={\bf A}_{41}({\bf A}_{22}-\ld ){\bf A}_{13}({\bf A}_{34}-\ld{\bf I}_{34})+ O_\eps(\eps^3)\Pi_3(\mu, \ld)+O(\mu^4+|\ld|^4).
\end{aligned}
\eq
Next we expand $I_8$ as
\begin{align*}
I_8&=-{\bf A}_{41}({\bf A}_{22}-\ld )({\bf A}_{33}-\ld){\bf A}_{14}-{\bf A}_{41}{\bf B}_{22}({\bf A}_{33}-\ld ){\bf A}_{14} 
-{\bf A}_{41}({\bf A}_{22}-\ld){\bf B}_{33}{\bf A}_{14}\\
&\quad -{\bf A}_{41}({\bf A}_{22}-\ld )({\bf A}_{33}-\ld){\bf B}_{14}-{\bf A}_{41}{\bf B}_{22}{\bf B}_{33}{\bf A}_{14} 
-{\bf A}_{41}({\bf A}_{22}-\ld){\bf B}_{33}{\bf B}_{14}\\
&\quad-{\bf A}_{41}{\bf B}_{22}({\bf A}_{33}-\ld){\bf B}_{14}-{\bf A}_{41}{\bf B}_{22}{\bf B}_{33}{\bf B}_{14} 
=I_{8, 0}+I_{8, 1}+\dots I_{8,7}.
\end{align*}
By virtue of Lemma \ref{lemm:B:1} we have 
\begin{align*}
&I_{8, m}=\mu^2(\mez i\mu -\ld)O_\eps(\eps^2)+O_\eps(\eps^4)\Pi_3(\mu, \ld)+O(\mu^4+|\ld|^4),\quad m=1,2,\\
&I_{8, 3}=\mu (\mez i\mu -\ld)^2 O_\eps(\eps^2)+O_\eps(\eps^4)\Pi_3(\mu, \ld)+O(\mu^4+|\ld|^4),\\
&I_{8, m}=O_\eps(\eps^4)\Pi_3(\mu, \ld)+O(\mu^4+|\ld|^4),\quad m=4, 5, 6, 7.
\end{align*}
 Gathering the preceding estimates yields 
\bq\label{I8}
\begin{aligned}
I_8&=-{\bf A}_{41}({\bf A}_{22}-\ld )({\bf A}_{33}-\ld){\bf A}_{14}+\mu (\mez i\mu -\ld)^2 O_\eps(\eps^2) 
+\mu^2(\mez i\mu -\ld)O_\eps(\eps^2)\\
&\quad+O_\eps(\eps^4)\Pi_3(\mu, \ld)+O(\mu^4+|\ld|^4).
\end{aligned}
\eq
Combining \eqref{Im}, \eqref{I347} and \eqref{I8}, 
we deduce that the total contribution of ${\bf B}$ in type $I$ terms of $\det({\bf A}-\ld {\bf I}+{\bf B})$ is 
\bq\label{IB}
\begin{aligned}
I^{{\bf B}}&=\mu (\mez i\mu -\ld)^2 O_\eps(\eps^2)+\mu^2(\mez i\mu -\ld)O_\eps(\eps^2) 
+O_\eps(\eps^3)\Pi_3(\mu, \ld)+O(\mu^4+|\ld|^4).
\end{aligned}
\eq

The contribution of the type $II$ terms is
\bq\label{detA-detB:1}
\begin{aligned}
II&:=({\bf A}_{31} +{\bf B}_{31}){\bf B}_{42}({\bf A}_{13} +{\bf B}_{13}){\bf B}_{24} 
-({\bf A}_{31} +{\bf B}_{31}){\bf B}_{42}({\bf A}_{14} +{\bf B}_{14}){\bf B}_{23}\\
&-({\bf A}_{41} +{\bf B}_{41})({\bf A}_{32} +{\bf B}_{32})({\bf A}_{13} +{\bf B}_{13}){\bf B}_{24} 
+({\bf A}_{41} +{\bf B}_{41})({\bf A}_{32} +{\bf B}_{32})({\bf A}_{14} +{\bf B}_{14}){\bf B}_{23},
\end{aligned}
\eq
where we have used the facts that ${\bf A}_{23}={\bf A}_{24}={\bf A}_{42}=0$ and $\bf{I}=0$ 
in the lower left and upper right blocks. Notice that each term in $II$ contains at least one entry of ${\bf B}$.  
 In the process of expanding each product in \eqref{detA-detB:1}, if there are at least three entries of ${\bf B}$, 
then at least one of the three comes from the lower left block of ${\bf B}$.  
 So this one is $O(\mu^2+|\ld|^2)$ 
by virtue of \eqref{estB:lowerleft}, implying that the term is $O(\mu^4+|\ld|^4)$.  Therefore 
 we are left with
\bq\label{expand:II}
\begin{aligned}
II&={\bf A}_{31} {\bf B}_{42}{\bf A}_{13} {\bf B}_{24}-{\bf A}_{31} {\bf B}_{42}{\bf A}_{14} {\bf B}_{23}
-{\bf B}_{41}{\bf A}_{32}{\bf A}_{13}{\bf B}_{24}-{\bf A}_{41} {\bf B}_{32}{\bf A}_{13} {\bf B}_{24}\\
&\quad-{\bf A}_{41} {\bf A}_{32} {\bf B}_{13}{\bf B}_{24}-{\bf A}_{41} {\bf A}_{32} {\bf A}_{13} {\bf B}_{24}
+{\bf B}_{41}{\bf A}_{32} {\bf A}_{14} {\bf B}_{23}+{\bf A}_{41} {\bf B}_{32}{\bf A}_{14} {\bf B}_{23}\\
&\quad+{\bf A}_{41} {\bf A}_{32} {\bf B}_{14}{\bf B}_{23}+{\bf A}_{41} {\bf A}_{32} {\bf A}_{14}{\bf B}_{23}
+O(\mu^4+|\ld|^4)\\
&=:\sum_{m=1}^{10} II_m+O(\mu^4+|\ld|^4).
\end{aligned}
\eq
Within $II_m$ for $m\in\{ 1, 2, 3, 4, 7, 8\}$, there is one entry from the lower left block of  ${\bf B}$, 
one entry from the upper right  block of  ${\bf B}$ and one entry from the upper right block of ${\bf A}$. 
So their product is $O(\mu^4+|\ld|^4)$ in view of \eqref{estB:lowerleft} 
and the fact that ${\bf A}=O(\mu)$ in the upper right block. 
On the other hand, from \eqref{estB:1k}, \eqref{estB:23}  and \eqref{estB:24} we find that 
\begin{align*}
II_5&=-\eps^2 \big[ \mu O_\eps (\eps^2)+O(\mu^2+|\ld|^2)\big]\big[\eps \Pi_2(\mu, \ld)+O(\mu^3+|\ld|^3)\big]\\
& =O_\eps(\eps^5)\Pi_3(\mu, \ld)+O(\mu^4+|\ld|^4),\\
II_6&=-\eps^2 \big[\mu O_\eps(\eps^2)\big]\big[\eps \Pi_2(\mu, \ld)+O(\mu^3+|\ld|^3)\big]\\
& =O_\eps(\eps^5)\Pi_3(\mu, \ld)+O(\mu^4+|\ld|^4),\\
II_9&=\eps^2\big[ \mu O_\eps(\eps^2)+O(\mu^2+|\ld|^2)\big] 
\big[ -\frac{1}{8}\mu^2+\eps \Pi_2(\mu, \ld)+O(\mu^3+|\ld|^3)\big]\\
&=O_\eps(\eps^4)\Pi_3(\mu, \ld)+O(\mu^4+|\ld|^4),\\
II_{10}&=\eps^2\big[\mu+\mu O_\eps(\eps^2)\big]\big[ -\frac{1}{8}\mu^2+\eps \Pi_2(\mu, \ld)+O(\mu^3+|\ld|^3)\big]\\
&=-\frac{1}{8}\mu^3\eps^2+O_\eps(\eps^3)\Pi_3(\mu, \ld)+O(\mu^4+|\ld|^4).
\end{align*}
Thus the  total contribution of ${\bf B}$ in type $II$ terms is
\bq\label{IIB}
II^{\bf B}=-\frac{1}{8}\mu^3\eps^2+O_\eps(\eps^3)\Pi_3(\mu, \ld)+O(\mu^4+|\ld|^4).
\eq
Finally, combining \eqref{IB} and \eqref{IIB} leads to \eqref{contribution:B}.
 \end{proof}
 Now combining Propositions \ref{prop:detA} and \ref{prop:detB} we obtain the expansion for $\cP$
 \bq
 \begin{aligned}
\cP(\ld; \mu, \eps) = &
\big(\mez i\mu-\ld \big)^2\mu+\eps^2\Big\{-\frac{1}{8}\mu^3+r_1\mu\big(\mez i\mu -\ld\big)^2 
+r_2\mu^2\big(\mez i\mu -\ld\big)\Big\}\\  
&+O_\eps(\eps^3)\Pi_3(\mu, \ld) +O(\mu^4+|\ld|^4).
\end{aligned}
\eq
for some absolute constants $r_1,~r_2\in \Cc$.   
Still for small $\mu \in (0, \mez)$, we set 
\bq\label{def:gamma}
\ld =\gamma \mu
\eq
so that, upon recalling  \eqref{def:Pi34}, we have 
\bq\label{def:wtP}
\cP(\ld; \mu, \eps)=\mu^3 \wt P(\gamma; \mu, \eps),
\eq
where
\bq\label{form:wtP}
\begin{aligned}
&\wt P(\gamma; \mu, \eps)=  \big(\mez i-\gamma \big)^2+\eps^2 \Big\{-\frac{1}{8}+r_1 \big(\mez i -\gamma \big)^2  
+r_2 \big(\mez i -\gamma \big)\Big\}+O_\eps(\eps^3)\tt_1(\gamma)+\mu \tt_2(\gamma; \mu, \eps)
\end{aligned}
\eq
for some smooth function  $\tt_2(\gamma; \mu, \eps)$ and  for some quadratic $\tt_1(\gamma)$.   
The principal part of $\wt \cP$ with the last term omitted  is  
\bq\label{def:Q}
\wt Q(\gamma;  \eps)=  \big(\mez i-\gamma \big)^2+\eps^2 \Big\{-\frac{1}{8}+r_1 \big(\mez i -\gamma \big)^2  
+r_2 \big(\mez i -\gamma \big)\Big\}+O_\eps(\eps^3)\tt_1(\gamma).
\eq
Clearly, $\wt Q$ is a quadratic polynomial in $\gamma$.

\subsection{Roots of the characteristic function $\cP(\gamma; \mu,  \eps)$}
 First we look for the roots of $\wt Q$. 
 Of course, for  $\eps=0$,  $\wt Q(\gamma;  0)=(\mez i-\gamma)^2$ has the imaginary double  root $\mez i$.  
 We will prove that for small $\eps \ne 0$, the double root $\mez i$ bifurcates off the imaginary axis, which will subsequently lead to an unstable eigenvalue of $\mathcal{L}_{\mu, \eps}$.
\begin{lemm}\label{prop:Q}
There exists a small $ \eps_0>0$ such that for all $\eps\in  (-\eps_0, \eps_0)\setminus \{0\}$,  the quadratic polynomial  $\wt Q(\gamma;  \eps)$ has two simple roots
\bq\label{gamma:Q}
\gamma_\pm(\eps)=\mez i+\eps \ka_\pm(\eps),
\eq
where $\ka_\pm:  (-\eps_0, \eps_0)\to \Rr$  are smooth functions  and $\ka_\pm(0)=\pm\frac{1}{2\sqrt{2}}$. 
\end{lemm}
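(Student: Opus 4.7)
The plan is to locate the roots of the quadratic $\wt Q(\gamma;\eps)$ by recentering at the unperturbed double root $\gamma = \mez i$, rescaling, and then applying the implicit function theorem. Substituting $\beta = \gamma - \mez i$ (so that $\mez i - \gamma = -\beta$), the expression \eqref{def:Q} becomes a genuine quadratic in $\beta$:
\[
\wt Q = \bigl(1 + O_\eps(\eps^2)\bigr)\beta^2 + O_\eps(\eps^2)\,\beta + \bigl(-\tfrac{1}{8}\eps^2 + O_\eps(\eps^3)\bigr).
\]
The decisive feature is that the constant term is dominated by the real, strictly negative quantity $-\frac{1}{8}\eps^2$, so that the discriminant of this quadratic equals $\frac{1}{2}\eps^2 + O_\eps(\eps^3)$, which is nonzero to leading order.

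Next, rescale $\beta = \eps\tau$ and divide by $\eps^2$, obtaining the regular polynomial
\[
F(\tau,\eps) \;:=\; \eps^{-2}\wt Q\bigl(\mez i + \eps\tau;\eps\bigr) \;=\; \tau^2 - \frac{1}{8} + \eps\,R(\tau,\eps),
\]
where $R$ is smooth in $(\tau,\eps)$. At $\eps = 0$, the equation $F(\tau,0) = \tau^2 - \frac{1}{8} = 0$ has two simple roots $\tau = \pm\frac{1}{2\sqrt 2}$ with nonvanishing derivatives $\partial_\tau F(\pm\frac{1}{2\sqrt 2},0) = \pm\frac{1}{\sqrt 2}$. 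The analytic implicit function theorem then produces unique smooth functions $\ka_\pm(\eps)$ satisfying $F(\ka_\pm(\eps),\eps) = 0$ and $\ka_\pm(0) = \pm\frac{1}{2\sqrt 2}$. Unwinding the substitutions, $\gamma_\pm(\eps) := \mez i + \eps\ka_\pm(\eps)$ are the two simple roots of $\wt Q(\cdot;\eps)$ for $0 < |\eps| < \eps_0$.

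The remaining point, real-valuedness of $\ka_\pm$, is the subtle step, and it must be extracted from the Hamiltonian structure \eqref{JK}. The identity $\cL_{\mu,\eps}^* = J\cL_{\mu,\eps}J$ from \eqref{L*} forces the spectrum of $\cL_{\mu,\eps}$ to be invariant under $\lambda \mapsto -\bar\lambda$. Translated to the leading part $\wt Q$ through $\gamma = \lambda/\mu$, this symmetry pins down $r_1 \in \Rr$ and $r_2 \in i\Rr$ by comparing $\wt Q(\gamma;\eps)$ with $\overline{\wt Q(-\bar\gamma;\eps)}$, and the same constraint applies to the $O_\eps(\eps^3)\tt_1(\gamma)$ remainder. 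Consequently $R$ satisfies $R(\tau,\eps) = \overline{R(-\bar\tau,\eps)}$. Combined with the symmetry $\ka_-(\eps) = -\overline{\ka_+(\eps)}$ of the bifurcating roots (forced by the pairing $\gamma_- = -\overline{\gamma_+}$) and the real initial values $\ka_\pm(0) = \pm\frac{1}{2\sqrt 2}$, an induction on the Taylor coefficients of $\ka_\pm$ shows that their imaginary parts vanish order by order.

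The main obstacle is this last reality step: while the first three ingredients comprise a routine bifurcation-from-a-double-root analysis, carefully propagating the Hamiltonian symmetry through all the expansions — including the implicit $O_\eps(\eps^3)\tt_1(\gamma)$ contribution that enters the construction of $r_1, r_2$ — and using it to systematically rule out imaginary corrections to $\ka_\pm$ at each order in $\eps$ is the technical heart of the argument.
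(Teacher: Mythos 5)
Your first two paragraphs reproduce the paper's argument exactly: recenter at $\gamma=\mez i$, substitute $\gamma=\mez i+\eps\tau$, divide by $\eps^2$ to obtain the regular polynomial $\tau^2-\tfrac18+\eps R(\tau,\eps)$ (the paper calls this $Q^\sharp(\ka;\eps)$), and apply the implicit function theorem at the simple roots $\tau=\pm\tfrac{1}{2\sqrt2}$, checking $\p_\tau F(\pm\tfrac{1}{2\sqrt2},0)=\pm\tfrac{1}{\sqrt2}\ne0$. This part is correct and matches the paper.

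The third paragraph, however, is a wrong turn on two counts. First, real-valuedness of $\ka_\pm(\eps)$ for $\eps\ne0$ is not needed: the only fact used downstream (in Theorem~\ref{prop:P} and in deducing \eqref{eigenvalue}) is that $\ka_\pm(0)=\pm\tfrac{1}{2\sqrt2}$ are real and nonzero, so that $\RE\ld_\pm=\pm\tfrac{1}{2\sqrt2}\mu\eps+O(\mu\eps^2)+O(\mu^2)$; any complex $\eps$-dependent correction is simply absorbed into the smooth function $g_1(\eps)$. Second, the Hamiltonian argument you sketch cannot actually deliver the conclusion. The symmetry $\ld\mapsto-\bar\ld$ descends (as you correctly note, via $r_1\in\Rr$, $r_2\in i\Rr$) to the statement that the \emph{pair} of roots $\{\ka_+,\ka_-\}$ is mapped to itself by $\ka\mapsto-\bar\ka$; together with the initial values this pins down $\ka_-(\eps)=-\overline{\ka_+(\eps)}$, i.e.\ $\ka_++\ka_-$ is purely imaginary. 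But that relation is fully compatible with $\ka_\pm=\pm a(\eps)+ib(\eps)$ for real $a,b$ with $b(0)=0$, $b(\eps)\ne0$, so the "induction on Taylor coefficients killing the imaginary parts" has no ground to stand on. In fact, since the sum of roots of the rescaled quadratic equals minus its linear coefficient $-r_2\eps+O(\eps^2)$, and the Hamiltonian constraint forces $r_2$ to be purely imaginary rather than zero, one generically expects $\IM\ka_+(\eps)=O(\eps)\ne0$. The paper's own proof never claims or uses real-valuedness beyond $\eps=0$; the codomain "$\Rr$" in the lemma statement should be read as an overstatement whose only exploited content is that $\ka_\pm(0)$ are real.
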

\begin{proof}
We seek solutions of the form $\gamma=\mez i+\ka \eps$. Then from \eqref{def:Q} we have
\[
\wt Q\big(\mez i+\ka\eps;  \eps\big)=\eps^2(\ka^2-\frac{1}{8})+\eps^2(r_1\ka^2\eps^2 -r_2\ka\eps)+O_\eps(\eps^3)\tt_1\big(\mez i+\ka \eps\big).
\]
Recall that $O_\eps(\eps^3)$ depends only on $\eps$. Dividing through by $\eps^2\ne 0$, we see that  $\wt Q(\mez i+\ka \eps;  \eps)$ has the same roots $\ka$ as $ Q^\sharp(\ka; \eps)$ where
\bq\label{def:Qsharp}
\begin{aligned}
Q^\sharp(\ka;  \eps)&:=\eps^{-2}\wt Q\big(\mez i+\ka\eps^2; \eps\big)=(\ka^2-\frac{1}{8})+\eps(r_1\ka^2\eps -r_2\ka)+O_\eps(\eps)\tt_1\big(\mez i+\ka \eps\big).
\end{aligned}
\eq
Clearly, $\ka^0_\pm=\pm\frac{1}{2\sqrt{2}}$ are the roots of $Q^\sharp(\cdot; 0)$.  
Since $\p_\ka Q^\sharp(\ka^0_\pm; 0)=\pm\frac{1}{\sqrt{2}}\ne 0$, the Implicit Function Theorem implies that 
there exists a pair of smooth functions $\ka_\pm(\eps)$ such that $\ka_\pm(0)=\ka_\pm^0$ 
and $Q^\sharp(\ka_\pm(\eps);  \eps)=0$ for small $\eps$.  
From the definition $\wt Q(\mez i+\ka \eps ;  \eps)=\eps^2 Q^\sharp(\ka; \eps)$, 
the roots of $\wt Q(\gamma; \eps)$ for small $\eps$ are $\gamma_\pm(\eps)=\mez i+\eps \ka_\pm(\eps)$.   
Since 
\[
\p_\gamma\wt Q \big(\mez i+\ka_\pm^0\eps;  \eps\big) = \eps \p_\ka Q^\sharp(\ka_\pm^0; \eps)=\pm\frac{\eps}{\sqrt{2}},
\]
we have $\p_\gamma\wt Q(\gamma_\pm(\eps);  \eps)\ne 0$ for small $\eps \ne 0$, implying that $\gamma_\pm(\eps)$ are simple roots for small $\eps \ne 0$.
\end{proof}
Now we recall from \eqref{form:wtP} and \eqref{def:Q} that 
 \bq\label{wtP-Q}
\wt \cP(\gamma; \mu, \eps)=\wt Q(\gamma; \eps)+\mu \tt_2(\gamma; \mu, \eps).
 \eq
 In particular, $\wt \cP(\gamma; 0, \eps)=\wt Q(\gamma; \eps)$.  
 If we fix a small $ \eps\in (-\eps_0, \eps_0)\setminus\{0\}$ and vary $\mu$, according to Lemma \ref{prop:Q}, the polynomial  $\wt Q(\gamma; \eps)$ 
 has two simple  roots of the form  $\gamma_\pm(\eps)= \mez i+\eps \ka_\pm(\eps)$.  In particular, 
  \[
  \p_\gamma\wt\cP(\gamma; 0, \eps)\vert_{\gamma= \gamma_\pm(\eps)}=\p_\gamma \wt Q(\gamma; \eps)\vert_{\gamma= \gamma_\pm(\eps)}\ne 0.
  \]
The Implicit Function Theorem  applied to \eqref{wtP-Q} implies that 
for each $\eps\in (-\eps_0, \eps_0)\setminus\{0\}$ there exists a small $\mu_0(\eps)>0$ such that for all $\mu\in (0,  \mu_0(\eps))$,  
$\wt \cP(\gamma; \mu, \eps)$ has at least two simple roots $ \gamma_\pm(\mu, \eps)$.  
For each such $\eps$, both mappings $\mu\mapsto \gamma_\pm(\mu ,\eps)$ are smooth and
 \bq\label{gamma:mu=0}
 \gamma_\pm(0, \eps)=\mez i+\eps \ka_\pm(\eps),\quad \ka_\pm(0)=\pm\frac{1}{2\sqrt{2}}.
\eq
 Finally, recalling the scaling relations \eqref{def:gamma}, \eqref{def:wtP} and \eqref{spec:negmu} 
 we obtain our main conclusion,  as follows.
 \begin{theo}\label{prop:P}
For all $\eps\in (-\eps_0, \eps_0)\setminus\{0\}$ and  $\mu\in \big(0, \mu_0(\eps)\big)$,  $\cP(\ld; \mu, \eps)$ 
has at least two simple roots of the form 
\bq
\ld_\pm(\mu, \eps)= \mu \gamma_\pm(\mu, \eps),
\eq
 where $\mu\mapsto \gamma_\pm(\mu, \eps)$ are smooth and satisfy \eqref{gamma:mu=0}. In particular,  
 \bq
\ld_\pm(\mu, \eps)=\mez i\mu\pm \frac{1}{2\sqrt{2}}\mu\eps+\mu\eps^2g_1(\eps)+\mu^2g_2(\mu, \eps),
\eq
where $g_1(\cdot)$ and  $g_2(\cdot ,\eps)$ are smooth for each $\eps$. On the other hand, for  $\mu \in \big(-\mu_0(\eps), 0\big)$ we have
 \bq
\ld_\pm(\mu, \eps)=\mez i\mu\mp \frac{1}{2\sqrt{2}}\mu\eps-\mu\eps^2\overline{g_1}(\eps)+\mu^2\overline{g_2}(-\mu, \eps).
\eq
 \end{theo}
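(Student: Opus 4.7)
The plan is to promote the two simple roots of the quadratic polynomial $\wt Q(\cdot;\eps)$ supplied by Lemma~\ref{prop:Q} into two simple small-$\mu$ roots of the full characteristic function, and then to read off the asymptotic expansion. All the heavy lifting has already been done upstream: from \eqref{def:gamma}--\eqref{def:wtP} and \eqref{wtP-Q} we have the rescaled factorization
\begin{equation*}
\cP(\ld;\mu,\eps) = \mu^{3}\wt\cP(\gamma;\mu,\eps), \qquad \wt\cP(\gamma;\mu,\eps) = \wt Q(\gamma;\eps) + \mu\,\tt_2(\gamma;\mu,\eps),
\end{equation*}
with $\ld = \mu\gamma$ and $\tt_2$ smooth. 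Thus locating small nonzero roots $\ld\mapsto\cP(\ld;\mu,\eps)$ for $\mu\ne 0$ is equivalent to locating roots of $\gamma\mapsto\wt\cP(\gamma;\mu,\eps)$, and these must bifurcate from the two simple roots $\gamma_\pm(\eps) = \mez i + \eps\ka_\pm(\eps)$ of $\wt Q(\cdot;\eps)$ as $\mu$ is switched on.

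Fix $\eps \in (-\eps_0,\eps_0)\setminus\{0\}$. By Lemma~\ref{prop:Q}, the roots $\gamma_\pm(\eps)$ of $\wt Q(\cdot;\eps)=\wt\cP(\cdot;0,\eps)$ are simple, so $\p_\gamma\wt\cP(\gamma_\pm(\eps);0,\eps)\ne 0$. I would therefore apply the Implicit Function Theorem to the smooth map $(\gamma,\mu)\mapsto \wt\cP(\gamma;\mu,\eps)$ at each of the two base points $(\gamma_\pm(\eps),0)$ to produce a radius $\mu_0(\eps)>0$ and smooth branches $\mu\mapsto\gamma_\pm(\mu,\eps)$ on $(-\mu_0(\eps),\mu_0(\eps))$ satisfying $\gamma_\pm(0,\eps)=\gamma_\pm(\eps)$ and $\wt\cP(\gamma_\pm(\mu,\eps);\mu,\eps)=0$. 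Since nonvanishing of $\p_\gamma\wt\cP$ is stable under small perturbations of $(\gamma,\mu)$, the roots remain simple in $\gamma$, and hence $\ld_\pm(\mu,\eps):=\mu\,\gamma_\pm(\mu,\eps)$ are simple roots of $\cP(\cdot;\mu,\eps)$ for every $\mu\in(0,\mu_0(\eps))$.

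The expansion is then obtained by Taylor-expanding in $\mu$ about $\mu=0$. Using the smoothness and initial value from Lemma~\ref{prop:Q} to write $\ka_\pm(\eps) = \pm\frac{1}{2\sqrt{2}} + \eps\,h_\pm(\eps)$, and absorbing the smooth remainder $\gamma_\pm(\mu,\eps)-\gamma_\pm(0,\eps) = \mu\,g_2(\mu,\eps)$, yields
\begin{equation*}
\ld_\pm(\mu,\eps) = \mez i\mu \pm \frac{1}{2\sqrt{2}}\mu\eps + \mu\eps^{2}g_{1}(\eps) + \mu^{2}g_{2}(\mu,\eps),
\end{equation*}
which is the first claim. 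For $\mu\in(-\mu_0(\eps),0)$ I would not redo the construction but instead invoke the spectral conjugation symmetry \eqref{spec:negmu} of $\cL_{\mu,\eps}$: the roots already produced for $-\mu>0$ conjugate over to roots for $\mu<0$, yielding the second formula with $g_1, g_2$ replaced by $\overline{g_1}, \overline{g_2}$ and the $\pm$ signs swapped. The one technical point to keep in mind is that the threshold $\mu_0(\eps)$ inevitably degenerates as $\eps\to 0$, because the two roots of $\wt Q(\cdot;\eps)$ collide at the double root $\mez i$ when $\eps=0$; this forces $\mu$ to be taken small relative to $\eps$ rather than jointly small with it, which is precisely why the theorem is stated with $\mu_0=\mu_0(\eps)$.
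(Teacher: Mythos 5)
Your proposal is correct and follows essentially the same route as the paper: rescale via $\ld=\mu\gamma$ and $\cP=\mu^3\wt\cP$, apply the Implicit Function Theorem in $\mu$ at the two simple roots $\gamma_\pm(\eps)$ of $\wt Q$ furnished by Lemma~\ref{prop:Q}, Taylor-expand the resulting smooth branches, and invoke the conjugation symmetry \eqref{spec:negmu} for $\mu<0$. The closing remark that $\mu_0(\eps)$ necessarily degenerates as $\eps\to 0$ (since the two roots of $\wt Q$ coalesce at $\eps=0$) is a useful clarification of why $\mu_0$ must depend on $\eps$.
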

 Theorem \ref{prop:P} completes the proof of the modulational instability for Stokes waves of small amplitude in deep water. 
 
\appendix
\section{Stokes wave expansion}\label{appendix:Stokes}
  Here we derive from scratch the expansion of a Stokes wave of small amplitude and zero Bernoulli constant, $P=0$.  
 Our motivation is that the expansions found in the literature seem to be not unique.  
 In fact, the apparent non-uniqueness is simply due to different choices of coordinates for the parameter $a$. 
 
 In the moving frame of speed $c$, 
 the water wave system \eqref{ww:0}  becomes
  \begin{align}\label{ww:eq1}
& \Delta_{x, y} \phi =0\quad \text{in } \Omega, \\ \label{ww:eq2}
&-c\p_x\phi + g\eta + \tfrac12 |\na_{x, y}\phi|^2 =0\quad \text{ on } \{y=\eta(x)\}, \\ \label{ww:eq3}
 &\p_y\phi + (c-\p_x\phi)\p_x\eta=0\quad  \text{ on } \{y=\eta(x)\}, \\ \label{ww:eq4}
 &\na_{x, y}\phi \to 0 \text{ as } y\to -\infty. 
\end{align}
Using superscripts we Taylor-expand the unknowns,  
\begin{align*}
&\eta = \eps\eta^1 + \eps^2\eta^2 +\eps^3\eta^3+ \dots,\\
&\phi = \eps\phi^1 + \eps^2\phi^2 + \eps^3\phi^3+\dots,\\
&c = c^0+\eps c^1 +\eps^2c^2  +\dots
\end{align*}
  and reserve subscripts for derivatives. Each $\phi^j$ is harmonic in $\{ y<0\}$.   Then we Taylor-expand 
  \begin{align} 
\phi(x,\eta(x)) = &\phi(x, 0) + \phi_y(x,0)[\eps\eta^1(x) + \eps^2\eta^2(x) + \eps^3\eta^3(x) +\dots] \\
&+ \tfrac12\phi_{yy}(x,0)[\eps\eta^1(x)+\dots]^2 +\dots, 
\end{align} 
and similarly for $(\p_x\phi)(x, \eta(x))$ and $(\p_y\phi)(x, \eta(x))$.  In the following we will suppress the arguments.  In most places the arguments of $\phi, \phi_x, \phi_y$, etc. will be $(x,0)$. Equation \eqref{ww:eq3} gives
\bq\label{expand:wweq3}
\begin{aligned} 
&\eps\{\phi^1_y+ c^0\eta_x^1\}  
+  \eps^2\Big\{\phi_y^2 + \phi_{yy}^1\eta^1 - \phi_x^1\eta_x^1 + c^0\eta_x^2 + c^1\eta_x^1\Big\} \\
&+\eps^3\Big\{ \phi_y^3 + \phi_{yy}^1\eta^2 + \phi_{yy}^2\eta^1  - \phi_x^1\eta_x^2 - \phi_x^2\eta_x^1 -\phi_{xy}^1\eta^1\eta_x^1  \\
&+ c^0\eta_x^3  + c^1\eta_x^2 + c^2\eta_x^1 +\tfrac12\phi_{yyy}^1\eta^1\eta^1 \Big\} + O(\eps^4) = 0.  
\end{aligned}
\eq 
On the other hand, equation \eqref{ww:eq2} gives
\bq\label{expand:wweq2}
\begin{aligned}
&\eps\{ -c^0\phi_x^1 + g\eta^1\}  
+  \eps^2\Big\{-c^0\phi_x^2 - c^0\phi_{xy}^1\eta^1 - c^1\phi_x^1 + g\eta^2 + \tfrac12[\phi_x^1]^2 + \tfrac12[\phi_y^1]^2  \Big\} \\
&+ \eps^3\Big\{ -c^0\phi_x^3 - c^0\phi_{xy}^2\eta^1 - c^0\phi_{xy}^1\eta^2 - c^1\phi_x^2 -c^2\phi_x^1  \\
&+ g\eta^3 + \phi_x^1\phi_{xy}^1\eta^1 + \phi_y^1\phi_{yy}^1\eta^1 - \tfrac{c^0}{2}\phi_{xyy}^1\eta^1\eta^1-c^1\phi^1_{xy}\eta^1+\phi_x^1\phi_x^2+\phi_y^1\phi_y^2  \Big\}+ O(\eta^4) = 0. 
\end{aligned}
\eq
Now equating the coefficients of $\eps$ yields
\bq  \phi_y^1(x,0) + c^0\eta_x^1(x) = 0, \quad -c^0\phi_x^1(x,0) + g\eta^1(x) = 0, \quad \phi^1_{xx} + \phi^1_{yy}=0.  \eq
Clearly a solution is 
\bq\label{Stokes:order1}     \eta^1(x)=\cos x, \quad \phi^1(x,y)=c^0e^y\sin x, \quad c^0=\sqrt{g}. 
\eq  
In the coefficients of $\eps^2$, we substitute \eqref{Stokes:order1} into \eqref{expand:wweq3} and \eqref{expand:wweq2} to obtain 
\[
\phi_y^2 + \sqrt g\,\eta_x^2 + (\sqrt g\sin x)(\cos x) - (\sqrt g\cos x)(-\sin x) + c^1(-\sin x) =0  
\]
and 
\[ 
-\sqrt g\phi_x^2 + g\eta^2 - \sqrt g(\sqrt g\cos x)(\cos x) - c^1(\sqrt g\cos x) + \tfrac12 (\sqrt g\cos x)^2 + \tfrac12(\sqrt g\sin x)^2 =0.  
\]
They simplify to 
\bq 
\phi_y^2 + \sqrt g\,\eta_x^2  - c^1\sin x +\sqrt g\sin(2x)=0, \quad 
-\phi_x^2 + \sqrt g\eta^2 - c^1\cos x -\tfrac12 \sqrt g\cos(2x) = 0.  \eq
We eliminate $\eta^2$ by combining these two equations as 
\bq
\phi_y^2(x, 0)+ \phi_{xx}^2(x, 0) - 2c^1\sin x = 0.  \eq
We choose the trivial solution 
\bq 
c^1=0,\quad  \phi^2=0, \quad \eta^2=\tfrac12 \cos(2x).  \eq

As for equating the coefficients of $\eps^3$, we may now put $\phi^2=0$ and $c^1=0$ to obtain from \eqref{expand:wweq3} and \eqref{expand:wweq2}   the equations 
\[ 
\phi_y^3 + \phi_{yy}^1\eta^2 - \phi^1_x\eta_x^2 - \phi_{xy}^1\eta^1\eta_x^1 
+ c^0\eta_x^3 + c^2\eta_x^1 + \tfrac12\phi_{yyy}^1\eta^1\eta^1  = 0 
\]
and 
\[
-c^0\phi_x^3 - c^0\phi_{xy}^1\eta^2 - c^2\phi_x^1 + g\eta^3 + \phi_x^1\phi_{xy}^1\eta^1 
+ \phi_y^1\phi_{yy}^1\eta^1 -\tfrac{c^0}{2}\phi_{xyy}^1\eta^1\eta^1  = 0.  
\]
Now we plug in $c^0=\sqrt g,\ \phi^1=\sqrt ge^y\sin x,\ \eta^1=\cos x,\ \eta^2=\tfrac12\cos(2x)$ to obtain 
\[
\phi_y^3 + \sqrt g\eta_x^3 - c^2\sin x 
+ \tfrac12\sqrt g\cos(2x)\sin x + \sqrt g\sin(2x)\cos x + \sqrt g\cos^2x\sin x +\tfrac12\sqrt{g} \sin x \cos^2x  =0\]
and 
\[
-\phi_x^3 + \sqrt g\eta^3 - c^2\cos x - \tfrac{\sqrt g}2 \cos(2x)\cos x + \sqrt g\cos^3x + \sqrt g\sin^2x\cos x - \tfrac12\sqrt g\cos^3x  = 0.  
\]
They simplify to 
\begin{align} 
\phi_y^3 + \sqrt g\eta_x^3 - c^2\sin x + \tfrac98\sqrt g \sin(3x) +\tfrac58\sqrt g\sin x = 0, \\ \label{eq:eta3}
-\phi_x^3 + \sqrt g\eta^3 - c^2\cos x + \sqrt g [\tfrac38\cos x - \tfrac38\cos(3x)]=0.  \end{align}
Combining the last two equations, we find 
\bq 
\phi_y^3(x, 0) + \phi_{xx}^3(x, 0) + (-2c^2 +\sqrt g)\sin x =0
\eq
which admits the (trival) solution 
\begin{equation}
c^2=\frac 12 \sqrt{g},\quad \phi^3=0.
\end{equation}
Then it follows from \eqref{eq:eta3} that
\begin{equation}
\eta^3=\frac{1}{8}\cos x+\frac{3}{8}\cos(3x).
\end{equation}
Thus we have proved the expansions for $\eta$ and $c$ in \eqref{expand:Stokes}. 
On the other hand,  since $\psi(x)=\phi(x, \eta(x))=\sqrt{g}e^{\eta(x)}\sin x$,  
the expansion for $\psi$ follows  from Taylor's formula. 
We remark that by a simple change of the variable $a$, we could  have modified  the coefficients of 
the $\frac18\cos x$ and $\frac{\sqrt{g}}{4}\sin x $ terms in \eqref{expand:star}  if we wished. 

\section{Riemann mapping and proof of Proposition \ref{prop:z12} and Lemma  \ref{lemm:Riemann}}\label{Appendix:Riemann}
\subsection{Riemann mapping}
Recall that the fluid domain at  a fixed time is given by $\Omega=\{(x, y)\in \Rr^2: y<\eta(x)\}$ where $\eta$ is $C^\infty$, even, $2\pi$-periodic, and $\eta(x)=O_\eps(\eps)$. We first prove the following Riemann mapping theorem  for the unbounded domain.
\begin{prop}\label{prop:Riemann}
For any sufficiently small $\eps$, there exist  mappings $Z_j(x, y): \Omega\to \Rr$, $j= 1, 2$ such that 
\begin{itemize}
\item[(i)] $Z_1+i Z_2$ is conformal in $\Omega$;
\item[(ii)] $(x, y)\in \Omega\mapsto (Z_1(x, y), Z_2(x, y))$ is one-to-one and onto $\Rr^2_-$;
\item[(iii)] $Z_2(x+2\pi , y)=Z_2(x, y)$  for all $(x, y)\in \Omega$ and $Z_2$ is even in $x$;  
\item[(iv)] $Z_2(x, \eta(x))=0$ for all $x\in \Rr$;
\item[(v)] $Z_1(x +2\pi, y)=2\pi +Z_1(x, y)$ for all $(x, y)\in \Omega$ and $Z_1$ is odd in $x$;
\item[(vi)] $\| \na_{x, y}(Z_1-x)\|_{L^\infty(\Omega)}+\| \na_{x, y}(Z_2-y)\|_{L^\infty(\Omega)}\le C\eps$.
\end{itemize}
\end{prop}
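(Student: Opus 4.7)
The plan is to seek $Z = Z_1 + iZ_2$ as a holomorphic function on $\Omega$ of the form $Z(z) = z + F(z)$, where $F = u + iv$ is a holomorphic function in $\Omega$, $2\pi$-periodic in $x$, with $u$ odd and $v$ even in $x$, and $\na F$ small. Under this ansatz the boundary condition (iv) is equivalent to the Dirichlet condition $v(x, \eta(x)) = -\eta(x)$ on $S$, and the required properties (iii), (v) of periodicity and parity will follow from the corresponding properties of $F$. Assertion (vi) will be a uniform $C^1$ estimate $\|\na F\|_{L^\infty(\Omega)} = O(\eps)$, and (i)-(ii) will follow from holomorphy together with smallness of $\na F$ via a winding-number argument.

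The heart of the proof is the construction of $v$. Because $\Omega$ is an $\eps$-perturbation of $\Rr^2_-$, I would flatten the boundary by the change of variables $\Phi : (x, y) \mapsto (x, y - \eta(x))$ sending $\Omega$ onto $\Rr^2_-$; pulling $v$ back to $V = v \circ \Phi^{-1}$ yields a uniformly elliptic Dirichlet problem $\mathcal{A}_\eps V = 0$ on $\Rr^2_-$ with $V(x, 0) = -\eta(x)$ and $\na V \to 0$ as $y \to -\infty$, the coefficients of $\mathcal{A}_\eps$ being $O(\eps)$ perturbations of those of $-\Delta$. Decomposing $V = V_0 + W$, where $V_0$ is the bounded Poisson harmonic extension of $-\eta$ to the half-plane (of size $O(\eps)$, with gradient decaying as $y \to -\infty$), the remainder $W$ solves $\mathcal{A}_\eps W = f_\eps$ with $W|_{y=0} = 0$, $\na W \to 0$ at $-\infty$, and $\|f_\eps\|_{L^2} = O(\eps^2)$. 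A Neumann-series/contraction argument in the homogeneous space $\dot H^1(\Rr^2_-)$ with zero trace produces a unique $W$, and Schauder regularity upgrades this to a smooth $C^1$ bound. Pulling back gives $v$ on $\Omega$ with the desired size.

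Next I would recover $u$ as the harmonic conjugate of $v$ in $\Omega$ normalized by $u(0, y) = 0$; concretely, integrate the Cauchy-Riemann one-form $v_y\, dx - v_x\, dy$ from $(0, y)$. The $2\pi$-periodicity of $u$ follows once $\int_0^{2\pi} \p_y v(x, y)\, dx = 0$ is verified; this integral is independent of $y$ (differentiate and use harmonicity together with $x$-periodicity) and vanishes at $-\infty$ since $\na v \to 0$ there. The parity claims follow from the reflection symmetry $(x, y) \mapsto (-x, y)$ of $\Omega$ combined with the evenness of the boundary data $-\eta$, which forces $v$ even, hence $v_y$ even and $v_x$ odd, hence $u$ odd. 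The bound (vi) is then immediate from the $O(\eps)$ control on $\na F$.

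For (i)-(ii), $Z = z + F$ is holomorphic with $\|\na F\|_{L^\infty(\Omega)} = O(\eps)$, so it is a local biholomorphism for $\eps$ small. Global injectivity and surjectivity onto $\Rr^2_-$ follow from the argument principle applied on a large ``period cell'' $R_N = \{0 \le x \le 2\pi,\ -N \le y \le \eta(x)\}$: for any $w \in \Rr^2_-$, Rouch\'e's theorem comparing $Z - w$ to $z - w$ on $\p R_N$ (the top mapped into $\{y = 0\}$ by (iv), the vertical sides matched by the quasi-periodicity (v), and the bottom essentially a large horizontal segment on which $Z \approx z$) produces exactly one preimage of $w$ in $R_N$ modulo the period, and letting $N \to \infty$ completes (ii). The main obstacle I anticipate is the construction of $v$: setting up a functional-analytic framework on the unbounded domain $\Omega$ that simultaneously yields existence, uniform $\eps$-smallness, and the correct decay of $\na v$ at $y = -\infty$. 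This is precisely the feature that distinguishes the infinite-depth case from the finite-depth construction in \cite{PegoSun}.
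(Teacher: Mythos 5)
Your proposal follows the same broad architecture as the paper's: reduce condition (iv) to a Dirichlet problem for the imaginary part of the correction $F=Z-z$, flatten to a perturbed elliptic problem on $\Rr^2_-$, solve, and recover the real part as the harmonic conjugate. Where you differ is (a) the choice of flattening map --- you use the shear $(x,y)\mapsto(x,y-\eta(x))$, while the paper uses the ``Poisson-based'' change of variables $y=\rho(x,Y)=e^{Y|D|}\eta(x)+Y$ --- and (b) the bijectivity argument --- Rouch\'e on a growing period cell versus the paper's maximum-principle-plus-boundary-surjectivity argument. The shear is cruder in a way that matters: its Jacobian deviates from the identity by $O(\eps)$ \emph{uniformly} in $y$, because the deviation involves only $\eta_x(x)$, whereas the paper's $\rho$ deviates by $O(\eps e^{Y})$. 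That exponential decay of the coefficients is what drives the decay estimate in the appendix (a direct Fourier computation giving $\|e^{-y/2}\p_x Z_2^*\|_{L^\infty(\T;L^1(\Rr_-))}\le C\eps$), which is then what lets the paper define $Z_1=x-\int_{-\infty}^{y}\p_x Z_2\,dy'$.

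This is exactly where the gap you flag sits. Your $W$ lives in zero-trace $\dot H^1(\Rr^2_-)$; to run Lax--Milgram or a contraction there you need a weighted Poincar\'e inequality $\|\langle y\rangle^{-\sigma}W\|_{L^2}\lesssim\|\p_y W\|_{L^2}$ ($\sigma>1$), which you should make explicit. Even with it, the resulting $W$ may grow (sublinearly) as $y\to-\infty$, and neither the energy estimate nor interior Schauder estimates force $\na W\to 0$ there; Schauder gives a uniform $C^{1,\alpha}$ bound, not decay. Several of your downstream steps quietly assume decay: the convergence of the bottom edge $\{y=-N\}$ to a straight segment in the Rouch\'e argument, and potentially the harmonic-conjugate construction. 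Some of these can be salvaged without decay --- the period integral $\int_0^{2\pi}v_y\,dx$ is $y$-independent and must vanish because $\na v\in L^2(\T\times\Rr_-)$; and the bijectivity can follow the paper's cleaner route ($Z(\Omega)\subset\Rr^2_-$ by the maximum principle, surjectivity from $Z_1(\cdot,\eta(\cdot))$ covering $\Rr$). But for a clean proof along your lines you either need to establish decay of $\na W$ by hand (as the paper does for $Z_2^*$, exploiting the decaying coefficients produced by the Poisson-based flattening) or carefully re-route every step that would otherwise use it. One further small slip: the normalization ``$u(0,y)=0$ for all $y$'' overdetermines the harmonic conjugate --- fix a single base point and integrate along paths in $\Omega$ (horizontal segments near $y=0$ may exit the domain, so the path must be chosen inside $\Omega$; path-independence then comes from simple connectedness).
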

\begin{proof}
  We consider the change of variables $(x, Y)\ni \Rr^2_-\mapsto (x, y)\in \Omega$ where $y=\rho(x, Y)=e^{Y|D|}\eta(x)+Y$ is periodic in $x$. This change of variables is one-to-one and onto since $\p_Y\rho=1+e^{Y|D|}|D|\eta(x)\ge \mez$ for sufficiently small $\eps$. Define the inverse by  
  \bq\label{def:upsilon}
  (x, y)= (x, \rho(x, Y))\quad\text{if and only if }\quad Y=\chi(x, y).
  \eq
  From the relation 
  \[
  y-Y-\frac{1}{2\pi}\hat \eta(0)=\sum_{k\ne 0}e^{|k|Y}e^{ixk}\hat \eta(k)
  \]
   we have 
  \bq\label{decay:rho}
  |y-Y-\frac{1}{2\pi}\hat \eta(0)|\le C_0e^{Y}\| \eta\|_{H^1}
  \eq
  and hence 
    \[
  |y-Y-\frac{1}{2\pi}\hat \eta(0)|\le C\eps e^{y}\| \eta\|_{H^1}, \quad C=C(\eta).
  \]
  In other words,
  \bq\label{decay:chi}
 |\rho(x, Y)-Y-\frac{1}{2\pi}\hat \eta(0)|\le C\eps e^Y,\quad |\chi (x, y)-y+\frac{1}{2\pi}\hat \eta(0)|\le C\eps e^y
  \eq
  and analogously for derivatives.  A direct calculation shows that if $\wt f(x, Y)=f(x, \rho(x, Y))$ then 
\bq\label{div:eq}
\cnx_{x, Y}(\mathcal{A}\na_{x, Y}\wt f)(x, Y)=\p_Y\rho(\Delta_{x, y}f)(x, \rho(x, Y))
\eq
with 
\bq\label{def:matrixA}
\mathcal{A}=
\begin{bmatrix}
\p_Y\rho & -\p_x\rho\\
-\p_x\rho & \frac{1+|\p_x\rho|^2}{\p_Y\rho}
\end{bmatrix}.
\eq
 Making use of \eqref{decay:chi}, we find that 
\[
| \na^m\cnx_{x, Y}(\mathcal{A}\na_{x, Y}Y)|\le C_m\eps e^Y\quad\forall (x, y)\in \Rr^2_-,~\forall m\ge 0.
\]
 Then by Lemma \ref{lemm:elliptic} below, there exists a unique solution $Z_2^*$ to  
\bq\label{sys:Z*}
\begin{cases}
\cnx_{x, Y}(\mathcal{A}\na_{x, Y}Z^*_2)=-\cnx_{x, Y}(\mathcal{A}\na_{x, Y}Y), \quad (x, y)\in \cO=\Tt\times \Rr_-,\\
\wt Z^*_2(x, 0)=0,\quad x\in \T,\\
\|\na_{x, Y} Z_2^*\|_{H^s(\cO)}\le C'_s\eps\quad\forall s\ge 0,\\
\| e^\frac{-y}{2}\p_x Z_2^*\|_{L^\infty(\T; L^1(\Rr_-))}\le C_*\eps.
\end{cases}
\eq
  Define $Z_2(x, y)$ by $Z_2(x, \rho(x, Y))=Y+Z^*_2(x, Y)$; that is,
\bq\label{form:Z2}
 Z_2(x, y)=\chi(x, y)+Z_2^*(x, \chi(x, y)).
\eq
Then, in view of \eqref{div:eq}, $Z_2$ satisfies 
\bq\label{sys:Z2}
\begin{cases}
\Delta_{x, y}Z_2(x, y)=0\quad\forall ~(x, y)\in\Omega,\\
Z_2(x+2\pi, y)=Z_2(x, y)\quad\forall ~(x, y)\in \Omega,\\
 Z_2(x, \eta(x))=0.
\end{cases}
\eq 
 Moreover,  $Z_2$ is even in $x$ because $\eta$ is even.  
 We claim that 
 \[
 (x, y)\mapsto \int_{-\infty}^y\p_x Z_2(x, y')dy'
 \]
  is well defined as a function in $L^\infty(\Omega)$. Indeed, differentiating \eqref{form:Z2} in $x$ gives 
 \[
  \p_xZ_2(x, y)= \p_x\chi(x, y)+ \p_xZ_2^*(x, \chi(x, y))+\p_YZ_2^*(x, \chi(x, y))\p_x \chi(x, y).
  \] 
  Then using the change of variables $(x, Y)=(x, \chi(x, y))$ and the exponential decay of $\p_xZ_2^*$ (the last estimate in \eqref{sys:Z*}),  together with  \eqref{decay:chi}, we obtain the claim.  Now we can define 
 \bq\label{def:Z1}
 Z_1(x, y)=x-\int_{-\infty}^y\p_x Z_2(x, y')dy',
 \eq
 so that $\p_yZ_1=-\p_xZ_2$. Since $\p_Y Z_2^*\in H^\infty(\cO)$, where $\cO=\Tt\times \Rr_-$,  
 we have $\p_YZ_2^*(x, Y)\to 0$ as $Y\to -\infty$.  Hence
 \[
\lim_{y\to -\infty} \p_yZ_2(x, y)=\lim_{y\to -\infty}\Big(\p_y\chi(x, y)+\p_YZ_2^*(x, \chi(x, y))\p_y\chi(x, y)\Big)= 1
 \]
uniformly for $x\in \Rr$. Together with the fact that $Z_2$ is harmonic, this yields
\[
\p_xZ_1(x, y)=1-\int_{-\infty}^y\p_x^2 Z_2(x, y')dy'=1+\int_{-\infty}^y\p_y^2 Z_2(x, y')dy'=\p_yZ_2(x, y).
\]
Thus $Z_1$ and $Z_2$ obey the Cauchy-Riemann equations
  \begin{align}\label{CR1}
  &\p_xZ_1=\p_yZ_2,\quad\p_yZ_1=-\p_xZ_2\quad\text{in}~\Omega.
  \end{align} 
  But $ \|\na_{x, y} (Z_2-y)\|_{L^\infty(\Omega)}\le C\eps$ due to \eqref{sys:Z*} and \eqref{decay:chi}, 
so that 
$\| \na_{x, y} (Z_1-x)\|_{L^\infty(\Omega)}\le C\eps$, proving (vi).  
Moreover, from \eqref{def:Z1} and the fact that $Z_2$ is even in $x$, it follows that  $Z_1$ is odd in $x$ 
and $Z_1(x+2\pi, y)=2\pi +Z_1(x, y)$.       Finally let us prove (ii). 
Owing to (vi), $Z=Z_1+iZ_2$ is one-to-one for sufficiently small $\eps$. 
By the maximum principle, $Z_2(x, y)\le 0$ in $\Omega$ and hence $Z(\Omega)\subset \Rr^2_-$.  
Then, since $Z$ is continuous, it is onto provided that $Z(\{(x, \eta(x)):x\in \Rr\})=\{(x, 0): x\in \Rr\}$.   
This in turn will follow if $Z_1(\{(x, \eta(x)):x\in \Rr\})=\Rr$. 
Indeed, since  $Z_1$ is continuous and  $Z_1(x, \eta(x))\to \pm \infty$ as $x\to \pm \infty$ in view of \eqref{def:Z1}, 
we conclude the proof. 
\end{proof}

\begin{lemm}\label{lemm:elliptic}
 Assume that $F:\mathcal{O}\to \Rr$ satisfies $\langle y \rangle^\sigma F\in L^2(\mathcal{O})$ for some $\sigma>1$, 
 where  $\mathcal{O}=\T\times\Rr_-$ and $\langle y\rangle =\sqrt{1+y^2}$. Recall the matrix $\mathcal{A}$  given by \eqref{def:matrixA}.

1)  There exists a unique variational solution $u$ to the  linear problem
\bq\label{elliptic:S}
\begin{cases}
\cnx_{x, y} (\mathcal{A} \na_{x, y} u)(x, y)=F(x, y),\quad (x, y)\in \mathcal{O},\\ 
u(x, 0)=0,\quad x\in \Tt
\end{cases}
\eq
such that 
\bq\label{variest}
\| \langle y\rangle^{-\sigma} u\|_{L^2(\mathcal{O})}\le \wt C_1\| \na_{x, y} u\|_{L^2(\mathcal{O})}  
\le \wt C_2\| \langle y\rangle^\sigma F\|_{L^2(\mathcal{O})}.
\eq
2) If  $F\in C^\infty(\overline{\mathcal{O}})$  satisfies $|\na_{x, y}^mF(x, y)|  
\le C_m\eps e^y$ in $\mathcal{O}$ for all $m\ge 0$, then 
\begin{align}\label{Sobolev:elliptic}
&\| \na_{x, y} u\|_{H^s(\cO)} \le C'_s\eps\quad\forall s\ge 0 \quad \text{ and } \\ \label{decay:elliptic}  
&\| e^\frac{-y}{2}\p_x u\|_{L^\infty(\T; L^1(\Rr_-))} \le  C_*\eps.
\end{align}
\end{lemm}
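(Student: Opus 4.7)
For part (1), the approach is a direct Lax--Milgram argument. I would take $V$ to be the Hilbert space of distributions $u$ on $\cO$ with $\nabla u\in L^2(\cO)$ and $u|_{y=0}=0$ in the trace sense, endowed with the Dirichlet inner product $(u,v)_V=\int_\cO\nabla u\cdot\nabla v$. The essential preliminary is the weighted Poincar\'e inequality
\[
\|\langle y\rangle^{-\sigma}u\|_{L^2(\cO)}\le C(\sigma)\|\partial_y u\|_{L^2(\cO)},\qquad \sigma>1,
\]
which follows from writing $u(x,y)=-\int_y^0\partial_y u(x,s)\,ds$, applying Cauchy--Schwarz in $s$, and using $\int_{-\infty}^0 |y|\langle y\rangle^{-2\sigma}\,dy<\infty$. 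Since the entries of the matrix $\mathcal{A}$ in \eqref{def:matrixA} are of the form $I+O(\eps e^y)$ under the standing assumption $\eta=O_\eps(\eps)$, the bilinear form $a(u,v)=\int_\cO\mathcal{A}\nabla u\cdot\nabla v$ is bounded and coercive on $V$ for small $\eps$, and the linear functional $v\mapsto-\int_\cO Fv$ is bounded by $\|\langle y\rangle^\sigma F\|_{L^2}\|\langle y\rangle^{-\sigma}v\|_{L^2}$. Lax--Milgram then supplies a unique $u\in V$ satisfying the two-sided bound \eqref{variest}.

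For the Sobolev estimate \eqref{Sobolev:elliptic}, I would differentiate the equation in $x$. Since this preserves the Dirichlet condition at $y=0$, each $\partial_x^k u$ is a variational solution of a problem of the same form, with right-hand side $\partial_x^k F-\cnx([\partial_x^k,(\mathcal{A}-I)]\nabla u)$ which is again $O(\eps)$ by induction, using the hypothesis on $F$ and the fact that all derivatives of $\mathcal{A}-I$ are $O(\eps e^y)$. This gives tangential control $\|\nabla\partial_x^k u\|_{L^2}\le C_k\eps$, and normal derivatives are recovered algebraically from the equation itself using that $\mathcal{A}_{22}$ is uniformly positive for small $\eps$, yielding \eqref{Sobolev:elliptic}.

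The essential new point is \eqref{decay:elliptic}. The plan is to pass to the $x$-Fourier series $u(x,y)=\sum_{k\in\Zz}\hat u_k(y)e^{ikx}$ and note that $\partial_x u$ involves only the modes $k\ne 0$. For the constant-coefficient model $\Delta u=F$, each such mode satisfies $(\partial_y^2-k^2)\hat u_k=\hat F_k$ with $\hat u_k(0)=0$ and decay at $-\infty$; the explicit Green's function together with the decay $|\hat F_k(y)|\le C_N(1+|k|)^{-N}\eps e^y$ (inherited from the smoothness and uniform decay of $F$ via integration by parts in $x$) gives $|\hat u_k(y)|\le C_N\eps(1+|k|)^{-N-2}e^y$ for $|k|\ge 2$, while a resonance between the forcing rate $e^y$ and the homogeneous decay rate $e^{|k|y}$ at $|k|=1$ forces $|\hat u_{\pm 1}(y)|\le C\eps|y|e^y$. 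Summing yields $|\partial_x u(x,y)|\le C\eps(1+|y|)e^y$, hence
\[
\int_{-\infty}^0 e^{-y/2}|\partial_x u(x,y)|\,dy\le C\eps\int_{-\infty}^0(1+|y|)e^{y/2}\,dy\le C_*\eps.
\]
For the true equation I would rewrite it as $\Delta u=F-\cnx((\mathcal{A}-I)\nabla u)$; by the Sobolev bound \eqref{Sobolev:elliptic} the perturbation is again of size $O(\eps e^y)$ with all derivatives, and the same Fourier argument (or a short Neumann iteration) applied to the perturbed right-hand side absorbs it. The main obstacle is precisely the $|k|=1$ resonance, which prevents a pure $e^y$ decay of $\partial_x u$; the weight $e^{-y/2}$ is chosen so that the resulting $|y|e^y$ still remains integrable against it.
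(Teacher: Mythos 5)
Your proposal matches the paper's proof in all essentials: the weighted Poincar\'e inequality combined with Lax--Milgram for part~(1), tangential differentiation plus algebraic recovery of normal derivatives for \eqref{Sobolev:elliptic}, and the reduction $\Delta u = F + \cnx((\mathrm{Id}-\mathcal{A})\nabla u)$ followed by a modewise analysis in $x$-Fourier with the explicit ODE solution and the $|k|=1$ resonance (giving the $|y|e^y$ growth that the weight $e^{-y/2}$ absorbs) for \eqref{decay:elliptic}. The paper proceeds in exactly this way --- in particular it does not iterate, but rather uses the Sobolev bound together with the $O(\eps e^y)$ decay of $\mathrm{Id}-\mathcal{A}$ to get the exponential decay of the modified source in one step --- so your remark about a possible Neumann iteration is superfluous but harmless.
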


\begin{proof} 
We only need to be careful with the behavior as $y\to -\infty$. 
In order to find the variational solution, we need a weighted Poincare inequality. Indeed,  it is easy to see that for any $\sigma>1$, there exists $C>0$ such that 
\bq\label{Poincare}
\int_{\mathcal{O}} \langle y\rangle^{-2\sigma}|u(x, y)|^2dydx\le C\int_{\mathcal{O}}| \p_y u(x, y)|^2dydx
\eq
for all $u(x, y)\in C_0^\infty(\mathcal{O})$. Define $\mathcal{H}^1_0(\mathcal{O})$ to be 
the completion of $C^\infty_0(\mathcal{O})$ under the norm 
\[
\| u\|_{\mathcal{H}^1_0}=\| \langle y\rangle^{-\sigma}u\|_{L^2(\mathcal{O})}+\| \na_{x, y} u\|_{L^2(\mathcal{O})}.
\]
Owing to \eqref{Poincare}, $\mathcal{H}^1_0(\mathcal{O})$ is a Hilbert space with respect to the inner product  
\[
(u, v)_{\mathcal{H}^1_0(\mathcal{O})}:=(\na_{x, y} u, \na _{x, y}v)_{L^2(\mathcal{O})}.
\]
 The Lax-Milgram theorem implies that  the elliptic problem \eqref{elliptic:S} has a unique solution  $u\in \mathcal{H}^1_0(\mathcal{O})$. More precisely, $u$ satisfies 
\bq\label{variform}
\int_{\mathcal{O}} \mathcal{A}\na_{x, y} u\cdot \na_{x, y} \varphi dydx=\int_{\mathcal{O}} F\varphi dydx
\eq
for all $\varphi \in \mathcal{H}^1_0(\mathcal{O})$. Inserting $\varphi=u$ yields the variational estimate \eqref{variest}. 

 Now we prove the decay estimates 2). Assume that   $F\in C^\infty(\overline{\mathcal{O}})$  satisfies 
$|\na_{x, y}^mF(x, y)|\le C_m\eps e^y$ in $\mathcal{O}$ 
for all $m\ge 0$. Then $F\in H^\infty(\mathcal{O})$ and by the standard finite difference technique 
we obtain $\na_{x, y} u\in H^\infty(\mathcal{O})$ together with \eqref{Sobolev:elliptic}. 
It remains to prove the decay \eqref{decay:elliptic}.  Let us rewrite \eqref{elliptic:S}
 as
\bq
\begin{cases}
\Delta_{x, y} u(x, y)=G:=F+\cnx_{x, y}\big((\mathrm{Id}-\mathcal{A})\na_{x, y} u\big),\quad (x, y)\in\cO,\\
u(x, 0)=0,\quad x\in \Tt.
\end{cases}
\eq
 where   $|\na_{x, y}^m G(x, y)|\le  C'_m\eps e^{y}$ for all $m\ge 0$. Denoting by $\hat u(k, y)$  the Fourier transform of $u$ with respect to $x$, and analogously for $\hat G(k, y)$, we have 
 \[
 -k^2 \hat u(k, y)+\p_y^2\hat u(k, y)=\hat G(k, y),\quad \hat u(k, 0)=0\quad \forall k\in \Zz.
 \]
  The unique solution $\hat u$ that guarantees $\na_{x, y} u\in L^2(\cO)$ is given by 
\[
\hat u(k, y)=-\frac{e^{-|k|y}}{2|k|}\int_{-\infty}^ye^{|k|y'}\hat G(k, y')dy'+\frac{e^{|k|y}}{2|k|}\left(\int_{-\infty}^0e^{|k|y'}\hat G(k, y')dy'-\int_y^0e^{-|k|y'}\hat G(k, y')dy'\right)
\]
for all $k\ne 0$ and 
\[
\hat u(0, y)=-\int_y^0\int_{-\infty}^{y'}\hat G(0, y'')dy''dy'.
 \]
  Using $|\hat G(k, y)|\le  C'\eps \frac{e^y}{|k|+1}$ for all $k\in \Zz$, we estimate
\[
|k\hat u(k, y)|\le 
\begin{cases}
 \frac{C'\eps}{2(|k|+1)}\left[\frac{e^y}{|k|+1}+\frac{e^{|k|y}}{|k|+1}+\frac{1}{|k|-1}\big(e^y-e^{|k|y}\big)\right],\quad |k|\ge 2,\\
\frac{C'\eps}{4}\big(e^y-ye^y \big),\quad |k|=1.
\end{cases}
\]
Integrating in $y$, we obtain 
\[
\int_{\Rr_-}|e^\frac{-y}{2}k\hat u(k, y)|dy\le C''\eps |k|^{-2}\quad\forall |k|\ge 1. 
\]
Hence  $\| e^\frac{-y}{2}\p_x u\|_{L^\infty(\T; L^1(\Rr_-))}\le C_*\eps$, thereby proving \eqref{decay:elliptic}.    
In fact, the same decay can be proved for all derivatives of $u$.
\end{proof}

\subsection{ Proof of Proposition \ref{prop:z12}}
Applying Proposition \ref{prop:Riemann} with $\eta(x)=\eta^*(x)=O_\eps(\eps)$ we obtain a Riemann mapping 
$Z_1(x, y)+iZ_2(x,  y)$ from $\{(x, y)\in \Rr^2: y<\eta^*(x)\}$ onto ${\Rr^2_-}$. Let $z_1+iz_2$ be the 
inverse of $Z_1+iZ_2$. The properties (iii), (v) and (vi) in Proposition \ref{prop:Riemann} imply that 
\[
z_1(x+2\pi, y)=2\pi+ z_1(x, y),\quad z_2(x+2\pi, y)=z_2(x, y)\quad\forall(x, y)\in {\Rr^2_-},
\]
 $z_1$ is odd in $x$ and $z_2$ is even in $x$, and $\|\na_{x, y}(z_1-x)\|_{L^\infty(\cO)}+ \|\na_{x, y}(z_2-y)\|_{L^\infty(\cO)}\le C\eps$. An alternative way to state the  even-odd property is 
 $-\overline{z(x+iy)}=z(-(\overline{x+iy}))$, where $z=z_1+iz_2$.   
 Then $\zeta(x)=z_1(x, 0)$ is odd and  $z_2$ satisfies 
\bq\label{sys:z2}
\begin{cases}
\Delta_{x, y}z_2=0\quad\text{in}~{\Rr^2_-},\\
z_2(x+2\pi, y)=z_2(x, y)\quad \forall(x, y)\in {\Rr^2_-},\\
 z_2(x, 0)=\eta^*\circ \zeta(x),\\
 \na_{x, y} (z_2(x, y)-y)\in L^\infty(\cO).
\end{cases}
\eq
It follows that
\bq\label{z2:1}
z_2(x, y)=y+\frac{1}{2\pi}\sum_{k\in \Zz}e^{ikx}e^{y|k|}\widehat{\eta^*\circ \zeta}(k).
\eq
Using the Cauchy-Riemann equations we find that
\[
z_1(x, y)=R+x-\frac{i}{2\pi}\sum_{k\ne 0}e^{ikx}\mathrm{sign}(k)e^{y|k|}\widehat{\eta^*\circ \zeta}(k)
\]
for some constant $R\in \Rr$. Finally, since $z_1$ is odd, we have $R=0$ and hence \eqref{z1} follows.

\subsection{Proof of Lemma \ref{lemm:Riemann}}\label{Appendix:Gzeta}
 For $f\in  H^1(\T_L)$, we first recall from \eqref{def:Gh} and \eqref{elliptic:G} that 
\bq\label{def:Gh2}
G(\eta^*)f=\p_y\tt(x, \eta^*(x))-\p_x\tt(x, \eta^*(x))\p_x\eta^*(x)
\eq
where $\tt(x, y)$ solves the elliptic problem 
\bq\label{elliptic:G2}
\begin{cases}
\Delta_{x, y}\tt=0\quad\text{in}~\Omega,\\
\tt\vert_{y=\eta^*(x)}=f(x),\quad \na_{x, y}\tt\in L^2(\Omega).
\end{cases}
\eq
Let $z(\ul x, \ul y)=z_1+iz_2$ be the Riemann mapping given by Proposition \ref{prop:z12}. Set $\Theta(\ul x, \ul y)=\tt(z_1(\ul x, \ul y), z_2(\ul x, \ul y))$ for $(\ul x, \ul y)\in {\Rr^2_-}$. Since $z$ is holomorphic and $\tt$ is harmonic in $\Omega$, $\Theta$ is harmonic in ${\Rr^2_-}$. Next we find the boundary conditions for $\Theta$. Recall that $z$ maps $\{(\ul x, 0): x\in\Rr\}$ onto $\{(x, \eta^*(x): x\in \Rr)\}$. It follows that  $z_1(\ul x, 0)=\zeta(\ul x)$ and $z_2(\ul x, 0)=\eta^*(\zeta(\ul x))=(\zeta_\sharp \eta^*)(\ul x)$. In addition, $\| \na_{\ul x, \ul y}(z_1-\ul x)\|_{L^\infty({\Rr^2_-})}+\| \na_{\ul x, \ul y}(z_2-\ul y)\|_{L^\infty({\Rr^2_-})}\le C\eps$ by (iv) in Proposition \ref{prop:z12}.  Thus $\Theta$ satisfies
\[
\begin{cases}
\Delta_{\ul x, \ul y}\Theta=0\quad\text{in}~{\Rr^2_-},\\
\Theta(\ul x, 0)=(\zeta_\sharp f)(\ul x),\quad \na_{\ul x, \ul y}\Theta\in L^2(\T_L\times\Rr_-).
\end{cases}
\]
$\Theta$ is given explicitly by
\[
\Theta(\ul x, \ul y)=\frac{1}{L}\sum_{k\in \Zz} e^{ik\frac{2\pi}{L} \ul x}e^{\ul y|k\frac{2\pi}{L}|}\wh{(\zeta_\sharp f)}^L(k).
\]
In particular, 
\[
\p_y\Theta(\ul x, 0)=\frac{1}{L}\sum_{k\in \Zz} e^{ik \frac{2\pi}{L} x}|k\frac{2\pi}{L}|\wh{(\zeta_\sharp f)}^L(k)=|D_L|(\zeta_\sharp f)(\ul x).
\]
On the other hand, by the chain rule and \eqref{sys:z2} and the Cauchy-Riemann equations, we obtain 
\[
\p_y\Theta(\ul x, 0)=\zeta'(\ul x)\Big[\tt_y\big(\zeta(\ul x), \eta^*(\zeta(\ul x)\big)-\tt_x\big(\zeta(\ul x), \eta^*(\zeta(\ul x)\big)\p_x\eta^*(\zeta(\ul x)) \Big]=\zeta'(\ul x)\zeta_\sharp\big( G(\eta^*)f\big)(\ul x).
\]
Combining both expressions for $\p_y\Theta(\ul x, 0)$ yields
\[
\zeta_\sharp\big( G(\eta^*)f\big)(\ul x)=\frac{1}{\zeta'(\ul x)}|D_L|(\zeta_\sharp f)(\ul x)=\frac{1}{\zeta'(\ul x)}\p_{\ul x} \mathcal{H}_L(\zeta_\sharp f)(\ul x),
\]
where $\mathcal{H}_L$ denotes the Hilbert transform, $\wh{\mathcal{H}_Lu}^L(k)=-i\mathrm{sign}(k)\wh{u}^L(\xi)$.  Finally, in view of the identity $\zeta_\sharp^{-1}(\frac{1}{\zeta'}\p_{\ul x}g)=\p_x(\zeta_\sharp^{-1}g)$ with $g=\mathcal{H}_L(\zeta_\sharp f)$, we arrive at the claimed identity  $G(\eta^*)f=\p_x\big(\zeta^{-1}_\sharp \mathcal{H}_L(\zeta_\sharp f)\big)$.
\section{Proof of Lemma \ref{lemm:expandpq}}\label{Appendix:expandpq}
An application of the shape-derivative formula \eqref{shapederi} yields 
\bq  \label{Getapsi}
\begin{aligned}
G(\eta^*)\psi^*&=G(0)\psi^*-G(0)(B^*\eta^*)-\p_x(V^*\eta^*)+O_\eps(\eps^3)\\
&=|D|\psi^*-|D|(B^*\eta^*)-\p_x(V^*\eta^*)+O_\eps(\eps^3),
\end{aligned}
\eq
where in view of \eqref{expand:star} and \eqref{BV},
\[
\begin{aligned}
&B^*=G(\eta^*)\psi^*+O(\eps^2)=|D|\psi^*+O(\eps^2)=\eps \sin x+O_\eps(\eps^2),\\
&V^*=\p_x\psi^*+O(\eps^2)=\eps \cos x+O_\eps(\eps^2).
\end{aligned}
\]
The remainder in \eqref{Getapsi} is $O(\eps^3)$ because both $\eta$ and $\psi$ are $O_\eps(\eps)$. Next we find the $\eps^2$ terms in $B^*$ and $V^*$ from \eqref{BV}, \eqref{Getapsi} and \eqref{expand:star}, obtaining 
\bq\label{B:app}
\begin{aligned}
B^*&=G(\eta^*)\psi^*+\p_x\psi^*\p_x\eta^*+O_\eps(\eps^3)\\
&=|D|\psi^*-|D|(B(0, \psi^*)\eta^*)-\p_x(V(0, \psi^*)\eta^*)+\p_x\psi^*\p_x\eta^*+O_\eps(\eps^3)\\
&=|D|\psi^*-|D|((|D|\psi^*)\eta^*)-\p_x((|D|\psi^*)\eta^*)+\p_x\psi^*\p_x\eta^*+O_\eps(\eps^3)\\
&=|D|(\eps \sin x+\mez\eps^2\sin(2x))-|D|\big((\eps \sin x)(\eps \cos x)\big)-\p_x\big((\eps \cos x)(\eps \cos x)\big)\\
&\qquad-(\eps \cos x)(\eps \sin x)+O_\eps(\eps^3)\\
&=(\eps \sin x+\eps^2\sin(2x))-\eps^2\big\{\sin(2x) -\sin(2x)+\mez\sin(2x)\big\}+O_\eps(\eps^3)\\
&=\eps \sin x+\mez \eps^2 \sin(2x)+O_\eps(\eps^3)
\end{aligned}
\eq
and
\bq
\begin{aligned}\label{V:app}
V^*&=\p_x\psi^*-B^*\p_x\eta^*=\eps \cos x+\eps^2\cos (2x)+(\eps \sin x)(\eps \sin x)+O_\eps(\eps^3)\\
&=\eps \cos x+\mez\eps^2(1+\cos(2x))+O_\eps(\eps^3).
\end{aligned}
\eq
Formula  \eqref{z1} gives
\[
\zeta(x)=z_1(x, 0)=x-\frac{i}{2\pi}\sum_{k\ne 0}e^{ikx}\mathrm{sign}(k)\widehat{\eta^*\circ \zeta}(k),
\]
where $\zeta=x+\eps \zeta^1+\eps^2\zeta^2+O_\eps(\eps^3)$, $\eta^*=\eps\eta^1+\eps^2\eta^2+O_\eps(\eps^3)$ and 
\begin{align*}
\eta^*\circ \zeta(x)&=\eps\eta^1(x)+\eps^2\{\p_x\eta^1(x)\zeta^1(x)+\eta^2(x)\}+O_\eps(\eps^3)\\
&=\eps\cos x+\eps^2\{-\zeta^1(x)\sin x+\mez \cos (2x)\}+O_\eps(\eps^3).
\end{align*}
Matching the orders of $\eps$ we  find that 
\[
\zeta^1(x)=-\frac{i}{2\pi}\sum_{k\ne 0}e^{ikx}\mathrm{sign}(k)\widehat{\cos}(k)=-i\sign(D)\cos(x)=\sin x
\]
and, with $f(x)=-\zeta^1(x)\sin x+\mez \cos (2x)=-\sin^2x+\mez\cos(2x)$,
\[
\zeta^2(x)=-\frac{i}{2\pi}\sum_{k\ne 0}e^{ikx}\mathrm{sign}(k)\widehat{f}(k)=-i\sign(D)f(x)=\sin(2x).
\]
Thus, we obtain  $\zeta(x)=x+\eps\sin x+\eps^2\sin(2x)+O(\eps^3)$, which  finishes the proof of \eqref{zeta1}. 

Next Taylor-expanding $\zeta_\sharp V^*(x)=V^*(\zeta(x))$ using \eqref{zeta1} and \eqref{V:app} gives
\[
\zeta_\sharp {V^*}(x)=\eps \cos x+\eps^2\cos (2x)+O_\eps(\eps^3).
\]
Then combined with the expansion $\zeta'(x)=1+ \eps\cos x+2\eps^2 \cos(2x)+O_\eps(\eps^3)$, this implies 
\[
\begin{aligned}
p(x)&=\frac{c^*-\zeta_\sharp {V^*}}{\zeta'}\\
&=\frac{1+\mez \eps^2- \eps \cos x-\eps^2\cos(2x)}{1+\eps \cos x+2\eps^2 \cos(2x)}+O_\eps(\eps^3)\\
&=\left[1- \eps \cos x+\eps^2\big(\mez-\cos(2x)\big)\right]\left[1-\eps \cos x+\eps^2\big(\mez-\tdm \cos(2x)\big)\right]+O_\eps(\eps^3)\\
&=1 -2\eps\cos x+\eps^2\big(\tdm-2\cos(2x)\big)+O_\eps(\eps^3).
\end{aligned}
\]
Similarly, we have $\zeta_\sharp B^*(x)=\eps\sin x+\eps ^2\sin(2x)+O_\eps(\eps^3)$ and 
\[
q(x)=-p(x)\p_x(\zeta_\sharp B^*)(x)=-\eps\cos x+\eps^2(1-\cos(2x))+O_\eps(\eps^3).
\]
 Finally,  we expand
\begin{align*}
\frac{1+q(x)}{\zeta'}-1&=\frac{1-\zeta'+q}{\zeta'}\\
&=\Big[-2\eps \cos x+\eps^2\big(1-3\cos(2x)\big)\Big]\Big[1-\eps \cos x\Big]+O_\eps(\eps^3)\\
&=-2\eps \cos x+2\eps^2(1-\cos(2x))+O_\eps(\eps^3)\\
\end{align*}
which completes the proof.  
\section{Higher-order expansions} 
At a certain point in our investigation we expected that higher-order expansions would be necessary.  
We share these expansions with the reader in the expectation that they might well be useful in 
future computational and theoretical work.  
 \begin{align}
&\wt U_2=
 \begin{bmatrix}
 \mathrm{odd}\\  \mathrm{even}
 \end{bmatrix}=
\eps\begin{bmatrix}-S\\  C
\end{bmatrix} 
+\eps^2\begin{bmatrix}
-2S_2\\
C_2
\end{bmatrix}
+\eps^3\begin{bmatrix}
S-\frac{9}{2}S_3\\
-\mez C+\tdm C_3
\end{bmatrix}
+O(\eps^4),\\ 
&  U_3=
 \begin{bmatrix}
 \mathrm{ even}\\  \mathrm{odd}
 \end{bmatrix}=\begin{bmatrix} C\\ S
\end{bmatrix}
+\eps
\begin{bmatrix}
2C_2\\ S_2
\end{bmatrix}
+\eps^2\begin{bmatrix}
\frac{9}{2}C_3\\
-\mez S+\tdm S_3
\end{bmatrix}+O(\eps^3),\\
  & U_4=
 \begin{bmatrix}
  \mathrm{even}\\  \mathrm{odd}
 \end{bmatrix}=\begin{bmatrix}1\\ 0 \end{bmatrix}+ 
\eps
\begin{bmatrix}
C\\
-S
\end{bmatrix}
+\eps^2
\begin{bmatrix}
2C_2\\ -S_2
\end{bmatrix}
+O(\eps^3).
 \end{align}
\bq
\begin{aligned}
&{\bf A}_{11}=i\mu +\tdm i\mu\eps^2+\mu O_\eps(\eps^3),\quad {\bf A}_{12}=-i\mu\eps +\mu O_\eps(\eps^3),\\
& {\bf A}_{13}=\mu O_\eps(\eps^3),\quad {\bf A}_{14}=\mu-\mu\eps^2 +\mu O_\eps(\eps^3),\\
& {\bf A}_{21}=-i\mu\eps+\mu O_\eps(\eps^3),\quad {\bf A}_{22}=\mez i\mu-\frac{5}{4}i\mu\eps^2+\mu O_\eps(\eps^3),\\
&{\bf A}_{23}= {\bf A}_{24}=0,\\
&{\bf A}_{31}=O_\eps(\eps^5),\quad {\bf A}_{32}=-\eps^2,\\
&{\bf A}_{33}=\mez i\mu -\frac{5}{4}i\mu\eps^2+\mu O_\eps(\eps^3),\quad {\bf A}_{34}=-\tdm i\mu\eps+\mu O_\eps(\eps^3),\\
&{\bf A}_{41}=-1,\quad {\bf A}_{42}=0,\\
&{\bf A}_{43}=-\mez i\mu\eps +\mu O_\eps(\eps^3),\quad {\bf A}_{44}=i\mu +\mu O_\eps(\eps^3).
\end{aligned}
\eq
\\ \\
\noindent{\bf{Acknowledgment.}} 
The work of HQN was partially supported by NSF grant DMS-1907776. We thanks B. Deconinck and R. Creedon for useful discussions. 
\vspace{1mm}

\end{document}